\def\cyr{%
\renewcommand\rmdefault{wncyr}%
\renewcommand\sfdefault{wncyss}%
\renewcommand\encodingdefault{OT2}%
\normalfont
\selectfont}
\DeclareMathAlphabet{\zap}{OT1}{pzc}{m}{it}
\DeclareTextFontCommand{\textcyr}{\cyr}
\def\be{\begin{equation}}
\def\ee{\end{equation}}
\def\bea{\begin{eqnarray*}}
\def\eea{\end{eqnarray*}}
\def\CC{\mathbb C}
\newtheorem{main}{Theorem}
\DeclareMathOperator{\Iso}{Iso}
\newtheorem{lem}{Lemma}
\newtheorem{prop}{Proposition}
\newenvironment{proof}{\medskip \noindent
{\bf Proof.}}{\hfill \rule{.5em}{1em}
\\}
\def\ZZ{{\mathbb Z}}
\def\RR{{\mathbb R}}
\def\CP{{\mathbb C \mathbb P}}
\begin{document}

\title{On Einstein, Hermitian $4$-Manifolds}

\author{Claude LeBrun\thanks{Supported 
in part by  NSF grant DMS-0905159.}\\SUNY Stony
 Brook}

\date{}
\maketitle

 \begin{abstract}	
 Let $(M,h)$ be a compact $4$-dimensional Einstein manifold,
 and suppose that $h$ is Hermitian with respect to some
 complex structure $J$ on $M$.  Then  either $(M,J,h)$ is  K\"ahler-Einstein,
 or else, up to   rescaling  and isometry,  it is one of the following two exceptions:
 the Page metric on $ \CP_2 \# \overline{\CP}_2$, or the 
Einstein metric on $\CP_2 \# 2\overline{\CP}_2$ discovered 
in \cite{chenlebweb}. 
 \end{abstract}

\section{Introduction}

Recall \cite{bes} that a Riemannian metric is said to be {\em Einstein}
if it has constant Ricci curvature. A central problem of modern 
Riemannian geometry is to determine   which smooth 
compact manifolds admit Einstein  metrics, and  to precisely understand the moduli
space of these metrics when they do exist. 

The theory of K\"ahler-Einstein metrics provides the richest currently available  source for  
Einstein metrics  on  compact manifolds. This story 
becomes particularly compelling in real dimension $4$,   not only because of 
 the mature state of the 
theory of K\"ahler-Einstein metrics on compact complex surfaces \cite{aubin,s,tian,ty,yau}, 
but also because 
gauge-theoretic phenomena unique to this dimension
sometimes allow one to show   \cite{hit,lmo} that {\em every} Einstein metric on certain 
smooth compact $4$-manifolds must actually be K\"ahler-Einstein.  

In fact, there are, up to rescaling and isometries, 
 only  two known examples of  Einstein metrics on 
compact complex surfaces
which are  {\em not} K\"ahler. The older
and better-known  of these is the Page metric \cite{page} on 
$\CP_2 \# \overline{\CP}_2$. 
The second, of more recent provenance    \cite{chenlebweb},   is a  metric 
on $\CP_2 \# 2\overline{\CP}_2$ discovered by the present author in collaboration
with Xiuxiong Chen and Brian Weber; for a somewhat different proof, see
\cite{lebhem10}. 
Both the of these metrics have holonomy $SO(4)$, and so are   non-K\"ahler
in the most  definitive, intrinsic sense. 
However, both are nonetheless {\em conformally} K\"ahler, in the sense that 
each is  obtained from some K\"ahler metric by multiplying by a smooth positive 
function. In particular, both are  Hermitian metrics on compact complex surfaces. 
The purpose of this article is prove that, up to homothety,  no other other
such examples can exist: 

\begin{main} \label{jubilo}
Let $(M^4 , J )$ be a compact complex surface, and 
suppose that $h$ is an Einstein metric on $M$ which is 
{Hermitian} with respect to $J$, in the sense that $h(\cdot , \cdot )= h(J\cdot, J\cdot )$.
 Then either 
\begin{itemize}
\item $(M ,J, h)$  is K\"ahler-Einstein; or 
\item $M \approx \CP_2 \# \overline{\CP}_2$, and $h$ is a constant times the Page metric; or 
\item  $M \approx \CP_2 \# 2\overline{\CP}_2$ and $h$ is a constant times the  metric of \cite{chenlebweb}. 
\end{itemize}
\end{main}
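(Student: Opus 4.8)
The plan is to first use Derdzinski's structure theory to strip away the conformal freedom, and then to recast the problem as a classification question for extremal Kähler metrics on rational surfaces.

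\textbf{Reduction to the conformally K\"ahler case.} Suppose $(M,J,h)$ is Hermitian and Einstein but not K\"ahler. The fundamental $2$-form $\omega = h(J\cdot,\cdot)$ is a unit section of $\Lambda^+$ that is an eigenvector of the self-dual Weyl curvature $W^+$, so for a Hermitian $4$-manifold $W^+$ has a repeated eigenvalue at every point. By Derdzinski's theorem, on the open set where $W^+ \neq 0$ the metric $g = |W^+|^{2/3}h$ (suitably normalized) is K\"ahler with respect to $J$, and one computes that $h = c\, s^{-2} g$, where $s$ is the scalar curvature of $g$. The first task is to globalize this: using the Einstein equation together with a maximum-principle argument, I would show that $s$ never vanishes, so that (after replacing $J$ by $-J$ if necessary) $s>0$ everywhere and $g$ is defined on all of $M$. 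The non-K\"ahler hypothesis is then precisely the statement that $s$ is non-constant.

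\textbf{Extremality and positivity.} With $h = c\, s^{-2} g$ Einstein and $g$ K\"ahler, the trace-free part of the Einstein equation imposes a curvature identity on $g$ which I expect to reduce exactly to the condition that $g$ be \emph{extremal} in Calabi's sense, i.e.\ that $\grad^{1,0}s$ be a holomorphic vector field. Since $s$ is non-constant this field is nonzero, so $\Aut(M,J)$ has positive dimension. Separately, one rules out the cases of non-positive Einstein constant (Ricci-flat or negative Hermitian Einstein metrics being forced to be K\"ahler), so the Einstein constant is positive; by Myers this gives $b_1 = 0$. Meanwhile $s>0$ exhibits $g$ as a K\"ahler metric of positive scalar curvature, so $M$ has Kodaira dimension $-\infty$, and with $b_1=0$ this makes $M$ a rational surface carrying a nontrivial holomorphic vector field.

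\textbf{Topological constraints and the list.} To cut the list down to size I would feed the conformally K\"ahler, Einstein structure into the Gauss--Bonnet and signature formulas together with the Seiberg--Witten curvature estimates. These yield sharp bounds on $c_1^2(M)$, hence on the number of blow-ups, confining $M$ to $\CP_2$, $\CP_1\times\CP_1$, $\CP_2\#\overline{\CP}_2$, or $\CP_2\#2\overline{\CP}_2$. On $\CP_2$ and $\CP_1\times\CP_1$ every extremal K\"ahler metric has constant scalar curvature (by uniqueness of extremal metrics in a class, these are the symmetric models), contradicting $s$ non-constant and returning us to the K\"ahler--Einstein alternative.

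\textbf{Uniqueness and the main obstacle.} For the two remaining blow-ups the conformal-to-Einstein condition should pin down the K\"ahler class as a critical point of the relevant Weyl/Calabi-type functional on the K\"ahler cone, and I would show that this critical class is unique on each surface. Since the extremal K\"ahler metric in a fixed class is unique up to automorphism (by reduction of the extremal equation to an ODE under the $U(2)$- resp.\ $T^2$-symmetry, or by the general uniqueness theory), the resulting Einstein metric is forced, up to rescaling and isometry, to be the Page metric on $\CP_2\#\overline{\CP}_2$ or the metric of \cite{chenlebweb} on $\CP_2\#2\overline{\CP}_2$. I expect the genuine difficulty to lie in this final stage: simultaneously excluding all higher blow-ups and irrational ruled surfaces and proving uniqueness of the admissible K\"ahler class, since it is exactly here that the global Einstein condition, rather than the purely local Derdzinski theory, must be exploited in full.
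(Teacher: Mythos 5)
Your first two stages correctly reproduce what the paper quotes as Proposition \ref{aarhus} (Goldberg--Sachs forces a repeated eigenvalue of $W^+$, Derdzi{\'n}ski gives $h=s^{-2}g$ with $g$ extremal and $s>0$ non-constant, and the Einstein constant is positive), and your handling of $\CP_2\#\overline{\CP}_2$ by symmetry reduction is in the spirit of the paper's appeal to Calabi's maximal-compact theorem and B\'erard Bergery. But your third stage contains a genuine error. No Seiberg--Witten or Gauss--Bonnet estimate can confine $M$ to a list ending at $\CP_2\#2\overline{\CP}_2$: the surface $\CP_2\#3\overline{\CP}_2$ carries Siu's K\"ahler--Einstein metric, which is in particular a Hermitian Einstein metric, so it cannot be excluded by any curvature--topology inequality. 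What must be proved there is that every Hermitian Einstein metric on it is K\"ahler, and that is a statement about Futaki invariants and K\"ahler classes, not about bounds on $c_1^2$. The paper accordingly keeps $k=3$ on the list in Proposition \ref{aarhus}; the restriction to $k\le 3$ comes not from gauge theory but from the positivity of $\rho+2i\partial\overline{\partial}\log s$ (hence $c_1>0$, so $M$ is Del Pezzo) combined with the observation that an extremal metric of non-constant scalar curvature forces the algebra of holomorphic fields to be nontrivial and non-semi-simple, which among Del Pezzo surfaces singles out the blow-ups of $\CP_2$ at $1$, $2$, or $3$ points. (The same semi-simplicity remark disposes of $\CP_2$ and $\CP_1\times\CP_1$, as you note.)

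The second gap is that at the point you label ``the genuine difficulty'' you assert uniqueness of the critical K\"ahler class but supply no mechanism---and this uniqueness is essentially the entire content of the paper. The paper's mechanism is concrete: write ${\mathcal A}(\Omega)=(c_1\cdot\Omega)^2/\Omega^2+\frac{1}{32\pi^2}\|{\mathfrak F}(\Omega)\|^2$ explicitly in coordinates on the reduced K\"ahler cone, using moment-polytope integrals for the Futaki components and the Futaki--Mabuchi norm; verify by a machine-assisted but sign-transparent computation that the second derivative of ${\mathcal A}$ along segments on which the sum of two coordinates is held constant is a ratio of polynomials with all positive coefficients, so ${\mathcal A}$ is strictly convex on such segments (Lemmas \ref{convex2} and \ref{convex3}); then use the finite automorphisms preserving ${\mathcal A}$ (the swap of exceptional divisors, and on $\CP_2\#3\overline{\CP}_2$ also the Cremona involution and the cyclic permutation) to force any critical point onto the fixed locus, since convexity forbids two distinct critical points on one segment. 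On $\CP_2\#2\overline{\CP}_2$ this reduces uniqueness to one-variable calculus on the diagonal ($F'>0$ for $\beta\ge 1.2$, $F''>0$ on $(0,1.2]$); on $\CP_2\#3\overline{\CP}_2$ the invariant classes have vanishing Futaki invariant, so ${\mathcal A}$ reduces to $(c_1\cdot\Omega)^2/\Omega^2$, whose unique critical ray is $\RR^+c_1$ by the reverse Cauchy--Schwarz inequality for the Lorentzian intersection form---whence the extremal metric there has constant scalar curvature and $h$ is K\"ahler--Einstein, which is exactly how $k=3$ drops off the exceptions list in Theorem \ref{jubilo}. Without a substitute for this convexity-plus-symmetry argument, your outline does not close.
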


In \cite{lebhem}, 
the present author previously  proved 
a tantalizing partial result in this direction:

\begin{prop} \label{aarhus} 
Let $(M^4 , J )$ be a compact complex surface, and 
suppose that $h$ is an Einstein metric on $M$ which is 
{Hermitian} with respect to $J$. 
 Then $h$ is conformal 
to a $J$-compatible K\"ahler metric $g$.  Moreover, 
if $h$ is not itself K\"ahler, then 
\begin{itemize}
\item $(M ,J)$ has
${c_1}> 0$;
\item $M \approx \CP_2 \# {k}\overline{\CP}_2$, $k = 1,2,3$; 
\item $h$ has positive Einstein constant; 
\item $g$ is an extremal K\"ahler metric;
\item$g$ has   scalar curvature $s > 0$; and
\item after  suitable normalization, $h=s^{-2}g$.
\end{itemize}
Moreover, the isometry group of $(M,h)$ contains a $2$-torus. 
\end{prop}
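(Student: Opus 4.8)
The plan is to exploit the rigid structure that the Einstein condition imposes on the self-dual Weyl curvature $W^+$, and then to reinterpret the resulting conformally-K\"ahler metric through K\"ahler geometry. First I would record the Hermitian data: writing $\omega = h(J\cdot,\cdot)$ for the fundamental $2$-form and orienting $M$ by $J$, the form $\omega$ is self-dual of constant length, so $\omega/\sqrt2$ is, pointwise, a distinguished eigenvector of $W^+$ regarded as a trace-free symmetric endomorphism of $\Lambda^+$. This holds on any Hermitian $4$-manifold. The key local step is to show that the \emph{remaining} two eigenvalues of $W^+$ coincide, so that $W^+$ has the K\"ahler-type spectrum $(a,b,b)$ at every point. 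Here the Einstein hypothesis enters through the contracted second Bianchi identity, which for an Einstein metric gives the divergence-free condition $\delta W^+ = 0$; combining this with the Hermitian structure equations (the Lee form, integrability of $J$) should force the asserted degeneracy. I expect this to be the main technical obstacle, since for a general non-K\"ahler Hermitian metric $W^+$ need not be $J$-invariant on $\omega^\perp$, and the two eigenvalues there genuinely differ.

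Once $W^+$ has a repeated eigenvalue everywhere, I would invoke Derdzi\'nski's theorem: on $U = \{W^+ \neq 0\}$ the rescaled metric $g = \mathrm{const}\cdot |W^+|^{2/3}h$ is K\"ahler, with K\"ahler form proportional to $\omega$, hence compatible with the \emph{same} $J$. To globalize I would use that Einstein metrics are real-analytic in harmonic coordinates: if $W^+$ vanished on a nonempty open set it would vanish identically, making $h$ (anti-)self-dual, and an (anti-)self-dual Einstein Hermitian metric is K\"ahler, contrary to hypothesis. A maximum-principle argument for the elliptic equation satisfied by $|W^+|$ then shows $W^+$ is nowhere zero in the non-K\"ahler case, so $g$ is a globally defined $J$-compatible K\"ahler metric conformal to $h$ (the K\"ahler-Einstein case being trivially conformally K\"ahler). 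Matching conformal factors through the K\"ahler identity $|W^+|_g = |s|/(2\sqrt6)$ yields $|W^+|^{1/3}\propto s$ and hence, after normalization, $h = s^{-2}g$; since $h$ is globally defined, $s$ is nowhere zero, and I normalize so that $s>0$.

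Next I would extract the K\"ahler-geometric consequences. Rewriting $r_h = \lambda h$ via the conformal transformation law for the Ricci tensor, with conformal factor $-\log s$, and simplifying using the four-dimensional K\"ahler identities, should reduce the trace-free part to exactly Calabi's extremal equation, i.e. $\grad^{1,0}s$ is a holomorphic vector field; thus $g$ is extremal. Because $s$ cannot be constant (otherwise $h$ is homothetic to $g$, hence K\"ahler), the field $K = J\,\grad_g s$ is a nonzero Killing field of $g$ which preserves $s$ and is therefore also Killing for $h=s^{-2}g$. For the topology, $s>0$ everywhere forces Kodaira dimension $-\infty$, so $M$ is rational or ruled; feeding $h$ into the Gauss--Bonnet and signature integrals (a Hitchin--Thorpe-type estimate), together with positivity of $s$, should rule out the irrational and the $c_1\not>0$ cases, showing $(M,J)$ is a del Pezzo surface with $c_1>0$, diffeomorphic to $\CP_2 \# k\overline{\CP}_2$, with the Einstein constant coming out positive from the same integrals.

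Finally I would upgrade the circle generated by $K$ to a $2$-torus and sharpen the bound on $k$. Nonconstancy of $s$ makes $\grad^{1,0}s$ a nontrivial holomorphic field, so $\Aut_0(M,J)$ is positive-dimensional; since $g$ is extremal, Calabi's theorem identifies $\Iso_0(M,h)=\Iso_0(M,g)$ with a maximal compact subgroup of the reduced automorphism group, and I expect the rigidity of the conformally-Einstein condition to force this group to have rank at least $2$, so that $\Iso(M,h)$ contains a $2$-torus. The del Pezzo surfaces admitting such a torus action are precisely the toric ones; among those diffeomorphic to $\CP_2\# k\overline{\CP}_2$ this leaves $k\le 3$, with $k=0$ excluded since the Fubini--Study metric is K\"ahler-Einstein. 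Besides the spectral computation of the first step, I anticipate that the rank-$2$ symmetry and the sharp bound $k\le 3$ will be the most delicate points, since the crude Hitchin--Thorpe inequality alone only gives $c_1^2=9-k>0$, i.e. $k\le 8$.
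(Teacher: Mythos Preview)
Your overall architecture (Goldberg--Sachs $\Rightarrow$ Derdzi\'nski $\Rightarrow$ extremal K\"ahler $\Rightarrow$ del Pezzo $\Rightarrow$ restrict $k$) matches the paper's, and your treatment of the first two steps is essentially what the paper does, except that the paper simply cites the Riemannian Goldberg--Sachs theorem rather than re-deriving the spectral degeneracy of $W_+$ from $\delta W_+=0$.

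The substantive divergence is in the last two steps, where you yourself flag the difficulties. For $c_1>0$, the paper does \emph{not} use a Hitchin--Thorpe-type integral inequality; instead it computes directly from the conformal Einstein equation that the closed $(1,1)$-form $\rho + 2i\partial\bar\partial\log s$ is pointwise positive, so $c_1$ is represented by a K\"ahler form. Your proposed route via ``$s>0$ plus Gauss--Bonnet/signature'' does not obviously yield $c_1>0$ as a K\"ahler class (positivity of $\int s\,d\mu$ only gives $c_1\cdot[\omega]>0$), so this is a genuine gap that the explicit positive form fills.

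For the sharp bound $k\le 3$ and the $2$-torus, the paper's logic runs \emph{opposite} to yours. Rather than first producing a rank-$2$ isometry group and then invoking the toric del Pezzo list, the paper observes that an extremal K\"ahler metric with non-constant $s$ has non-vanishing Futaki invariant, hence the Lie algebra of holomorphic vector fields admits a nonzero character and so cannot be semi-simple; it is also non-trivial since $\nabla^{1,0}s\neq 0$. Among del Pezzo surfaces, only the blow-ups of $\CP_2$ at $1$, $2$, or $3$ points in general position have a non-trivial, non-semi-simple automorphism algebra, which pins down $k$. The $T^2$ in $\Iso(M,h)$ then follows \emph{a posteriori}, because these three surfaces are toric and Calabi's theorem makes $\Iso_0(M,g)$ a maximal compact subgroup of $\Aut_0(M,J)$. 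Your attempt to manufacture a second Killing field directly is the hard direction, and the paper avoids it entirely.
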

The proof  of this assertion consists of three main steps. First, 
the Riemannian  Goldberg-Sachs Theorem \cite{aggs,gs,nur,pb} implies
that the self-dual Weyl curvature $W_+$ has a repeated eigenvalue at
each point. If $h$ is not itself K\"ahler, a result of 
Derdzi{\'n}ski  \cite[Theorem 2]{derd} then implies that this Einstein metric
can be written as $h=s^{-2}g$ for an  extremal K\"ahler metric $g$
with non-constant positive scalar curvature $s$. 
Finally, if $\rho$ denotes the Ricci form of $g$, 
one can show \cite[Proposition 2]{lebhem}   that 
$\rho + 2i\partial\overline{\partial} \log s$ is  a positive $(1,1)$-form.
 Hence $c_1  > 0$, 
and $(M,J)$ is a Del Pezzo surface.
Since $M$ moreover admits an extremal K\"ahler metric
of non-constant scalar curvature,  its Lie algebra of holomorphic
vector fields must be both non-trivial and non-semi-simple.
The
classification of Del Pezzo surfaces \cite{delpezzo,cubic}
therefore implies that $M$ must be the blow-up of $\CP_2$ at  
one, two, or three points general position. 

For the one-point blow-up, the Page metric is the only possibility \cite{lebhem}. 
Indeed, 
for any extremal K\"ahler metric on a compact complex manifold, the 
the identity component  of the isometry group is necessarily \cite[Theorem 3]{calabix2}
 a maximal compact subgroup of the identity component of the complex automorphism group. 
If  $M=\CP_2 \# \overline{\CP}_2$,  and if $g$ is a conformally Einstein K\"ahler
metric on $M$, it then follows that 
 $\Iso_0(M,g) \cong U(2)$.
 But since  $\Iso_0(M,g)$   automatically preserves the scalar curvature $s$, 
it also acts  isometrically on the Einstein metric $h=s^{-2}g$. Thus $h$
is a cohomogeneity-one Einstein metric, and  the work of B\'erard Bergery 
\cite[Th\'eor\`eme 1.8]{beber3} then shows that 
it must actually be a constant times the  Page metric. 

Unfortunately, the remaining 
cases of $M=\CP_2 \# 2\overline{\CP}_2$ and  $\CP_2 \# 3\overline{\CP}_2$
are not amenable to  elementary arguments of this flavor.  Instead, this article 
will  prove the   following pair of results by  a variational method:

\begin{main} \label{uni2}
Modulo rescalings and biholomorphisms, there is exactly one conformally  
K\"ahler, Einstein metric
$h$ on $M=\CP_2\# 2 \overline{\CP}_2$. This metric  coincides with 
the metric of \cite{chenlebweb}, and is   characterized
by the fact that the conformally related K\"ahler metric $g$ 
minimizes the $L^2$-norm of the scalar curvature among
all K\"ahler metrics on $M$. 
\end{main}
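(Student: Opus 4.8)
The plan is to prove uniqueness via a variational characterization of the relevant Kähler metrics, exploiting the structure already provided by Proposition~\ref{aarhus}. By that proposition, any non-Kähler Einstein Hermitian metric $h$ on $M=\CP_2\#2\overline{\CP}_2$ arises as $h=s^{-2}g$ for an extremal Kähler metric $g$ with $s>0$, and $M$ is a Del Pezzo surface. The key is that extremal Kähler metrics are the critical points of the Calabi functional $g\mapsto\int_M s^2\,d\mu_g$ restricted to a fixed Kähler class, so I would first reduce the problem to understanding the \emph{Calabi energy} as a function on the Kähler cone. Because conformally Einstein imposes more than extremality, I expect the conformally Einstein condition to single out not merely a critical point within each class but a distinguished Kähler class, so the real variational problem is to minimize the Calabi functional over \emph{all} Kähler metrics on $M$ simultaneously, i.e.\ over the Kähler cone.

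First I would set up the variational framework: on a Del Pezzo surface the Calabi functional, minimized over each Kähler class, descends to a function $\mathcal{C}$ on the Kähler cone (equivalently on the positive cone in $H^{1,1}(M,\RR)$), computable in terms of the class alone via the Futaki invariant and the projection of $c_1$. The relevant fact is that for extremal metrics the infimum of $\int s^2\,d\mu$ in a class equals an explicit cohomological quantity; minimizing this over classes reduces the infinite-dimensional problem to a finite-dimensional optimization on the two-dimensional (after fixing volume) space of Kähler classes on $\CP_2\#2\overline{\CP}_2$ invariant under the torus action guaranteed by Proposition~\ref{aarhus}. I would then show that the conformally Einstein condition forces $g$ to realize the minimum of $\mathcal{C}$ over the whole cone, not just a critical point in its own class. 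The mechanism is that the extra equation satisfied by $s$ (from $h=s^{-2}g$ being Einstein) is exactly the Euler--Lagrange equation for varying the Kähler class, so a conformally Einstein extremal metric is a critical point of $\mathcal{C}$ on the Kähler cone.

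Next I would carry out the finite-dimensional analysis: I would compute $\mathcal{C}$ explicitly as a function on the space of torus-invariant Kähler classes, using the symmetry to cut down to a one- or two-parameter family, and verify that it has a unique critical point in the interior of the cone, which is moreover a strict minimum. The symmetry reduction is essential here, since without it the space of Kähler classes on $\CP_2\#2\overline{\CP}_2$ is three-dimensional; the $2$-torus in the isometry group collapses this to a manageable dimension. Having identified a unique minimizing class, I would invoke existence and uniqueness of the extremal representative in that class, and then confirm that the construction of \cite{chenlebweb} produces precisely this metric, so that the candidate and the known example coincide.

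The main obstacle, I expect, is establishing that the conformally Einstein condition is genuinely equivalent to minimizing the Calabi functional over the entire Kähler cone, rather than merely being extremal within a fixed class. Extremality is a pointwise PDE condition whereas cone-minimization is a cohomological optimization, and bridging them requires showing that the first variation of $\mathcal{C}$ across Kähler classes vanishes exactly when the extra scalar equation for $s$ holds. This is where the interplay between the Ricci form identity $\rho+2i\partial\overline{\partial}\log s>0$ from Proposition~\ref{aarhus} and the Futaki-type invariant controlling the class-derivative of $\mathcal{C}$ must be made precise. Once that equivalence is in hand, uniqueness follows from the strict convexity (or at least unique critical point) of the finite-dimensional reduced functional, and the remaining bookkeeping---matching the abstract minimizer to the explicitly constructed metric and quotienting by biholomorphisms---is routine.
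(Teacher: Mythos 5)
Your overall skeleton --- descend from the Calabi energy to the class-function on the K\"ahler cone via the sharp lower bound for extremal metrics, show the conformally Einstein condition picks out a distinguished class, prove uniqueness of that class by an explicit finite-dimensional computation, then invoke uniqueness of extremal representatives \cite{xxgang2} and match with \cite{chenlebweb} --- is exactly the paper's route. But two of your pivotal steps are miscalibrated. First, you promote ``conformally Einstein $\Leftrightarrow$ global minimizer of the Calabi functional over the whole cone'' to the main obstacle, and propose to bridge it using the positivity of $\rho+2i\partial\overline{\partial}\log s$. That equivalence is neither needed nor provable at that stage: all the argument requires is the one-way implication that a conformally Einstein K\"ahler metric lies in a \emph{critical} class of $\mathcal{A}$, which is Proposition \ref{critical}, quoted from \cite{chenlebweb,lebhem}; the positivity of $\rho+2i\partial\overline{\partial}\log s$ is used only to get $c_1>0$ in Proposition \ref{aarhus}, not for criticality. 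The minimizing characterization stated in the theorem is a \emph{consequence} extracted after the fact: once the critical point is shown to be unique and a local minimum, it is the absolute minimum of $F$, and the convexity inequality ${\mathcal A}(\beta,\gamma)\geq F\bigl((\beta+\gamma)/2\bigr)$ together with (\ref{sharp}) upgrades this to minimization of $\mathcal{C}$ among all K\"ahler metrics. Attempting to establish your equivalence up front would leave you stuck, since a priori there could be critical classes that are not minima.

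Second, the step you treat as routine verification --- ``verify that it has a unique critical point'' --- is where the paper's actual work lies, and your proposed mechanism for the dimension reduction is wrong: the $2$-torus does not cut down the space of K\"ahler classes at all (on this toric surface every class is represented by invariant metrics; the torus is used to compute the Futaki contributions via the moment pentagon). The reduced cone $\check{\zap K}$ is already two-dimensional, and $\mathcal{A}$ is a large rational function of $(\beta,\gamma)$ with no evident global convexity. The paper's device is: machine-verify that $\left(\frac{\partial}{\partial\beta}-\frac{\partial}{\partial\gamma}\right)^2{\mathcal A}$ is a ratio of polynomials with all-positive coefficients, so $\mathcal{A}$ is strictly convex along every segment $\beta+\gamma=\mbox{\bf const}$ (Lemma \ref{convex2}); combine this with the \emph{discrete} $\ZZ_2$ symmetry interchanging the two exceptional divisors to force every critical point onto the diagonal $\beta=\gamma$ (Lemma \ref{symmetry2}); then control the one-variable function $F(\beta)={\mathcal A}(\beta,\beta)$ by elementary sign estimates on its derivatives (Lemmas \ref{prime2} and \ref{doubleprime2}), yielding Proposition \ref{laudate}. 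Without some such convexity-plus-symmetry argument, uniqueness of the critical point of an explicit but enormous rational function is not routine --- indeed Maschler \cite{gideon2} had previously obtained only numerical evidence at precisely this juncture.
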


\begin{main}  \label{uni3}
Modulo rescalings and biholomorphisms, there is only one conformally 
K\"ahler, Einstein metric
$h$ on $M=\CP_2\# 3 \overline{\CP}_2$. This metric is actually K\"ahler-Einstein, 
and is exactly the metric discovered by Siu \cite{s}. 
\end{main}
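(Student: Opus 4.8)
The plan is to rule out the non-K\"ahler alternative of Proposition~\ref{aarhus} by a variational argument, and then to identify the surviving K\"ahler--Einstein case with Siu's metric. By Proposition~\ref{aarhus}, if a conformally-K\"ahler Einstein metric $h$ on $M=\CP_2\#3\overline{\CP}_2$ were not itself K\"ahler, it would take the form $h=s^{-2}g$ for an extremal K\"ahler metric $g$ of non-constant scalar curvature $s>0$ whose isometry group contains a $2$-torus. As in Theorem~\ref{uni2}, the K\"ahler partner $g$ of any conformally-Einstein metric is a critical point --- indeed a minimizer --- of the functional
\[
\mathbf{A}(\Omega)=\inf_{\omega\in\Omega}\int_M s_\omega^2\, dV_\omega
\]
on the K\"ahler cone $\mathcal{K}\subset H^{1,1}(M,\RR)$. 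Since $\int_M s^2\, dV$ is scale-invariant in real dimension $4$, the function $\mathbf{A}$ descends to the projectivized cone. I would therefore aim to show that $\mathbf{A}$ has no critical point other than the anticanonical ray; as the extremal metric there has constant scalar curvature, this forces $h$ to be K\"ahler--Einstein and contradicts the non-constancy of $s$.

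By Calabi's theory together with the Futaki invariant, $\mathbf{A}$ is an explicit $\Aut(M,J)$-invariant rational function on $\mathcal{K}$, determined by the intersection form of $M$ and the Futaki invariant of each class. For the degree-$6$ del Pezzo surface $M$ the automorphism group is $(\CC^\ast)^2\rtimes(S_3\times\ZZ_2)$, whose finite part realizes the Weyl group $W(A_2{+}A_1)$ acting on $H^2(M,\RR)=\RR H\oplus\RR E_1\oplus\RR E_2\oplus\RR E_3$ by the permutations of the $E_i$ together with the reflection in $H-E_1-E_2-E_3$. A short computation shows that the $W$-fixed subspace is exactly the line $\RR\cdot c_1$ spanned by the ample anticanonical class $-K=3H-E_1-E_2-E_3$. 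Hence the projectivized cone carries a unique $W$-fixed point $[c_1]$; by symmetry the Futaki invariant vanishes there, so the extremal metric in $c_1$ is of constant scalar curvature, i.e.\ K\"ahler--Einstein, and is precisely Siu's metric.

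The crux of the argument --- and the step I expect to be hardest --- is to prove that $[c_1]$ is the \emph{only} critical point of $\mathbf{A}$. I would write $\mathbf{A}$ in explicit affine coordinates on a slice of the projectivized cone and analyze its critical locus directly, either by establishing strict convexity of $\mathbf{A}$ along the slice or by showing that the Euler--Lagrange equations force the class to be $W$-invariant, hence proportional to $c_1$. This is exactly where $k=3$ diverges from $k=2$: for $\CP_2\#2\overline{\CP}_2$ the smaller symmetry group leaves room for a genuine non-K\"ahler critical point (the metric of \cite{chenlebweb}), whereas the richer symmetry of the degree-$6$ del Pezzo collapses the critical locus onto the single anticanonical ray. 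Carrying out this explicit computation --- including the asymptotic behavior of the Futaki term as the class degenerates toward the walls of $\mathcal{K}$, so as to confirm that no critical point escapes to the boundary --- is the main technical obstacle.

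Granting this, any conformally-K\"ahler Einstein metric on $M$ must be K\"ahler--Einstein. Siu's theorem \cite{s} guarantees the existence of such a metric, and the Bando--Mabuchi uniqueness theorem for K\"ahler--Einstein metrics on Fano manifolds shows that it is unique up to biholomorphism and rescaling. Therefore, modulo rescalings and biholomorphisms, the metric of \cite{s} is the only conformally-K\"ahler Einstein metric on $\CP_2\#3\overline{\CP}_2$, as claimed.
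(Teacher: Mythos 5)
Your reduction to the variational problem, your identification of the symmetry group $W(A_2{+}A_1)\cong S_3\times\ZZ_2$ with fixed line $\RR\,c_1$, and your endgame (vanishing Futaki invariant at $c_1$ forces the extremal metric there to have constant scalar curvature, hence be K\"ahler--Einstein; Siu gives existence; uniqueness of the extremal/K\"ahler--Einstein metric finishes) all match the paper in substance. But the proposal has a genuine gap exactly where you flag it: the uniqueness of the critical point \emph{is} the theorem, and you leave it as a plan. Worse, both routes you sketch for closing it diverge from what actually works. Symmetry alone does not collapse the critical locus --- a critical point of an invariant function need not be a fixed point of the group; its orbit is merely another set of critical points --- so ``the Euler--Lagrange equations force $W$-invariance'' is not available in one step. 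What the paper proves (Lemma~\ref{convex3}) is strict convexity of ${\mathcal A}$ only along the special segments $\alpha+\beta=\mbox{\bf const}$, $\gamma=\widehat{\mbox{\bf const}}$ inside the coordinate chart ${\zap U}=\{\delta>0\}$, verified by a machine computation showing every coefficient of the numerator and denominator of $\left(\partial_\alpha-\partial_\beta\right)^2{\mathcal A}$ is positive; global convexity of ${\mathcal A}$ on a slice of the projectivized cone is neither proved nor needed. Convexity along these segments, combined with the transposition symmetries, forces $\alpha=\beta=\gamma$ on ${\zap U}$ and its Cremona image (Lemmas~\ref{symmetry3a}, \ref{symmetry3b}), so a critical class is only pushed into the \emph{union} ${\mathbb V}\cup{\mathbb W}$ of the Cremona-fixed hyperplane $\delta=0$ and the $\ZZ_3$-fixed plane $\alpha=\beta=\gamma$ (Lemma~\ref{claritas}) --- not into the full $W$-fixed line.

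A second, non-convexity mechanism is then needed to collapse ${\mathbb V}\cup{\mathbb W}$ onto the anticanonical ray, and this is the step your proposal lacks entirely: on either subspace the Futaki invariant vanishes, because the relevant finite group acts on the Lie algebra of the torus without $+1$ as an eigenvalue (Lemma~\ref{veritas}), so there ${\mathcal A}(\Omega)=(c_1\cdot\Omega)^2/\Omega^2$; differentiating along the pencil $\Omega+tc_1$ (which stays in ${\mathbb V}\cup{\mathbb W}$ since $c_1\in{\mathbb V}\cap{\mathbb W}$) and invoking the reverse Cauchy--Schwarz inequality for the Lorentzian intersection form shows the first variation is strictly negative unless $\Omega\propto c_1$ (Proposition~\ref{gaudete}). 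This also dissolves your worry about critical points escaping to the walls of the cone: combining (\ref{sharp}) with reverse Cauchy--Schwarz exhibits $c_1$ as the unique absolute minimum of ${\mathcal A}$, so existence of the critical point is free and no asymptotic analysis of the Futaki term near the boundary is required. Your final paragraph is sound once uniqueness of the critical class is granted (the paper quotes Chen--Tian uniqueness of extremal metrics where you quote Bando--Mabuchi; either suffices at that point), but as written the proposal proves the easy half and defers the hard half.
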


Our approach  stems from the  study of the 
squared $L^2$-norm 
$${\mathcal C}(g)= \int_M s^2_g ~d\mu_g$$
of the scalar curvature, restricted to the space of K\"ahler metrics. 
An even more restricted version of this problem was introduced by Calabi \cite{calabix}, 
who constrained  $g$ to only vary in a  fixed  K\"ahler class
$[\omega ] \in H^{2} (M,\RR)$.  Calabi called the  critical metrics of
his restricted problem  {\em extremal K\"ahler metrics}, and showed
that the relevant Euler-Lagrange equations 
 are equivalent to requiring that
$\nabla^{1,0}s$ be a holomorphic vector field. In fact, 
every extremal K\"ahler metric turns out to be an 
absolute minimizer for the Calabi problem, and the proof of this  \cite{xxel}
moreover implies that any K\"ahler metric $g$ with K\"ahler class $[\omega ]=\Omega$
satisfies  the sharp estimate 
\begin{equation}
\label{sharp}
{\mathcal C}(g)  \geq 32\pi^2 {\mathcal A} (\Omega ) ~,
\end{equation}
where 
$$
{\mathcal A} (\Omega ):=
\frac{(c_1\cdot \Omega )^2}{\Omega^2}+ \frac{1}{32\pi^2}\|{\mathfrak F}(\Omega) \|^2~,
$$
and where  equality occurs iff $g$ is an extremal K\"ahler metric. Here 
$${\mathfrak F}(\Omega ) : H^0(M, {\mathcal O}(T^{1,0}M))\to \CC$$
denotes the Futaki invariant, and the relevant norm is the one induced by the 
$L^2$-norm on the space of  holomorphy potentials  \cite{fuma0}. In particular, 
for any extremal K\"ahler metric $g$ with K\"ahler class $\Omega$, one has 
\begin{eqnarray*}
\int_M s_0^2 ~d\mu_g &=& 32\pi^2 \frac{(c_1\cdot \Omega )^2}{\Omega^2}\\
\int_M (s-s_0)^2_g ~d\mu_g &=& \|{\mathfrak F}(\Omega)\|^2
\end{eqnarray*}
where 
$$
s_0= \fint_M s ~d\mu_g = \frac{\int s~d\mu_g}{\int d\mu_g}
$$
denotes the average scalar curvature.

 Essentially because    quadratic curvature functionals are scale-invariant in 
 real dimension $4$,  one  has 
  ${\mathcal A}(\lambda \Omega ) = {\mathcal A}( \Omega )$ for every
 $\lambda\in \RR^+$. Thus, if  ${\zap K} \subset H^2 (M, \RR)$ is the K\"ahler cone of 
$(M,J)$, 
  then letting $\check{\zap K}$ denote 
  ${\zap K}/\RR^+$, where the positive real numbers $\RR^+$ act 
  by scalar multiplication,  we often choose to  consider ${\mathcal A}$ as
  a function  ${\mathcal A}:\check{\zap K}\to \RR$.
  But from either point of view, 
  the following variational 
  principle \cite{chenlebweb, lebhem}  underpins our entire approach: 
      
 \begin{prop} \label{critical} 
Suppose that  $h$ is an Einstein metric on $M$ which is conformally related 
to a $J$-compatible K\"ahler metric $g$ with K\"ahler class $[\omega ]\in {\zap K}$. 
Then $[\omega ]$ is a critical point of ${\mathcal A}$.
 \end{prop}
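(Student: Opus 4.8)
The plan is to exhibit $\mathcal A$, up to the constant $32\pi^2$, as the class-by-class minimum of the Calabi energy $\mathcal C(g)=\int_M s_g^2\,d\mu_g$, and then to show that our distinguished metric $g$ is critical for $\mathcal C$ transverse to the Kähler classes. Recall that the sharp inequality (\ref{sharp}) gives $\mathcal C(g')\geq 32\pi^2\mathcal A([\omega_{g'}])$ for every $J$-compatible Kähler metric $g'$, with equality precisely when $g'$ is extremal. Since $g$ is extremal --- either because it has constant scalar curvature (the K\"ahler-Einstein case) or by Derdzi\'nski's theorem (the non-K\"ahler case; cf.\ Proposition \ref{aarhus}) --- equality holds at $g$, so $\mathcal C(g)=32\pi^2\mathcal A(\Omega)$ with $\Omega=[\omega]$. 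Now fix an arbitrary $\dot\Omega\in H^{1,1}(M,\RR)$, represent it by its $g$-harmonic $(1,1)$-form $\hat\Phi$, and let $g_t$ be the path of K\"ahler metrics with K\"ahler forms $\omega+t\hat\Phi$ (positive, hence K\"ahler, for small $t$). Feeding this path into (\ref{sharp}) shows that $f(t):=\mathcal C(g_t)-32\pi^2\mathcal A(\Omega_t)$ is differentiable, nonnegative, and vanishes at $t=0$; hence $f'(0)=0$, which gives $32\pi^2\,d\mathcal A(\dot\Omega)=\tfrac{d}{dt}\big|_0\mathcal C(g_t)=\langle\nabla\mathcal C,\Phi\rangle_{L^2}$, where $\Phi=\hat\Phi(\cdot,J\cdot)$ is the $J$-invariant symmetric tensor dual to $\hat\Phi$.

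It therefore suffices to prove $\langle\nabla\mathcal C,\Phi\rangle_{L^2}=0$. A standard first-variation computation gives the $L^2$-gradient of $\mathcal C$ over the space of all Riemannian metrics as $\nabla\mathcal C=2\Hess s-2(\Delta s)\,g-2s\,r+\tfrac12 s^2 g$, where $r$ is the Ricci tensor and $\Delta=\mathrm{tr}_g\Hess$. The Einstein hypothesis enters exactly here: writing $h=s^{-2}g=e^{2u}g$ with $u=-\log s$, the trace-free part of the conformal transformation law for Ricci in dimension four reduces, upon using $\nabla^2u-du\otimes du=-s^{-1}\Hess s$, to $\ro=-2s^{-1}(\Hess s)_0$, where $(\cdot)_0$ denotes trace-free part. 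Equivalently $s\,r=-2\Hess s+\nu\,g$ for a scalar $\nu$; substituting this into the gradient and using the trace to fix the pure-trace term collapses everything to the clean identity $\nabla\mathcal C=6\,\Hess s-3(\Delta s)\,g$.

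Finally I pair this against the harmonic form. Because $\Phi$ is $J$-invariant, in $\langle\Hess s,\Phi\rangle$ only the $J$-invariant part of $\Hess s$ contributes, and that part corresponds, under the isometry between $J$-invariant symmetric tensors and real $(1,1)$-forms, to the exact form $i\partial\overline\partial s$; being exact, it is $L^2$-orthogonal to the harmonic representative $\hat\Phi$, so $\int_M\langle\Hess s,\Phi\rangle\,d\mu_g=0$. For the remaining term, the K\"ahler identity $[\Lambda,\Delta]=0$ forces $\mathrm{tr}_g\Phi=2\Lambda\hat\Phi$ to be a harmonic function, hence constant, whence $\int_M(\Delta s)\,\mathrm{tr}_g\Phi\,d\mu_g=(\text{const})\int_M\Delta s\,d\mu_g=0$. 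Combining the two, $\langle\nabla\mathcal C,\Phi\rangle_{L^2}=0$, so $d\mathcal A(\dot\Omega)=0$ for every $\dot\Omega$, and $[\omega]$ is a critical point of $\mathcal A$.

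I expect the crux to be the middle step: correctly deriving $\ro=-2s^{-1}(\Hess s)_0$ from the conformal Ricci identity and then verifying that the pure-trace contributions assemble into exactly $-3(\Delta s)g$. It is precisely this Einstein-driven collapse that converts the troublesome $s\,r$ term into a multiple of $\Hess s$, thereby making the exact-versus-harmonic orthogonality --- otherwise unavailable --- the mechanism that kills the first variation.
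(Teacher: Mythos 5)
Your argument is correct, and it establishes the proposition by a genuinely different mechanism from the one the paper relies on (the proposition is quoted here from \cite{chenlebweb,lebhem}, where the proof runs through the Bach tensor). The common scaffold is the envelope argument: since $g$ is extremal, equality holds in (\ref{sharp}), so $f(t)=\mathcal{C}(g_t)-32\pi^2{\mathcal A}(\Omega_t)\geq 0$ vanishes to first order at $t=0$, reducing everything to showing $\frac{d}{dt}\mathcal{C}(g_t)\big|_{t=0}=0$. The cited proof gets this vanishing structurally: four-dimensional Einstein metrics are Bach-flat, Bach-flatness is conformally invariant, and the Bach tensor is the gradient of the Weyl functional, which on K\"ahler surfaces computes $\frac{1}{24}\mathcal{C}$ up to topological terms; hence the conformally Einstein K\"ahler metric $g$ is critical for $\mathcal{C}$ under \emph{all} K\"ahler variations. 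You instead compute the full Riemannian gradient $2\Hess s-2(\Delta s)g-2sr+\frac{1}{2}s^2g$ of $\mathcal{C}$, collapse it to $6\Hess s-3(\Delta s)g$ via the conformal-Einstein identity $\ro=-2s^{-1}(\Hess s)_0$ (your algebra checks out: $-2sr$ contributes $4\Hess s-(\Delta s)g-\frac{1}{2}s^2g$, and the $s^2$ terms cancel), and then kill the pairing with the harmonic lift by Hodge theory: the $J$-invariant part of $\Hess s$ corresponds to the exact form $i\partial\overline{\partial}s$, hence is $L^2$-orthogonal to the harmonic representative, while $\mathrm{tr}_g\Phi=2\Lambda\hat\Phi$ is constant because $\Lambda$ preserves harmonicity on K\"ahler manifolds. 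What each approach buys: the Bach-flat route yields criticality of $\mathcal{C}$ in every K\"ahler direction, including within a fixed class (which structurally recovers extremality), whereas your computation only handles the transverse class directions --- but that is precisely all the proposition needs, since extremality is already supplied by Proposition \ref{aarhus}. Two points you should make explicit: the normalization $h=s^{-2}g$ with $s>0$ is not part of the hypothesis but is exactly what Proposition \ref{aarhus}, via Derdzi\'nski \cite{derd}, provides in the non-K\"ahler case (when $h$ is itself K\"ahler the conformal factor is constant, $s$ is constant, $r=\frac{s}{4}g$, and your gradient vanishes identically, so the degenerate case is harmless); and the differentiability of $t\mapsto{\mathcal A}(\Omega_t)$ invoked in the envelope step follows from the smooth dependence of the Futaki--Mabuchi norm on the K\"ahler class \cite{fuma0}.
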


Fortunately, the formula for ${\mathcal A}$ can be found explicitly, although the 
actual expression is  complicated  enough that 
a program like {\em Mathematica}  is of enormous help in 
reliably obtaining the correct answer. 
In an earlier investigation, Maschler \cite{gideon2} used this  to  marshal  
overwhelming numerical evidence in favor of the conjecture
that $\mathcal A$ has exactly one critical point on either 
$\CP_2\#2\overline{\CP}_2$ and on
$\CP_2\#3\overline{\CP}_2$. In this paper, we will instead 
examine explicit formulas for  appropriate second derivatives of ${\mathcal A}$,
and observe that these 
imply that the function is strictly convex on certain line segments.
This allows one to show that a critical point must be invariant
under certain finite groups of automorphisms, and  leads to water-tight 
proofs of the uniqueness of the critical point. 
Theorems \ref{jubilo}, \ref{uni2} and
\ref{uni3}   then follow from
the uniqueness, up to isometry,  of extremal K\"ahler metrics
in a fixed K\"ahler class \cite{xxgang2,donaldsonk1,mabuniq}.

\section{The case of  $\CP\# 2 \overline{\CP}_2$} \label{comp2} 

In this section, we will show that 
$${\mathcal A}: \check{\zap K}\to \RR$$
 has only one critical point
  when  $M=\CP\# 2 \overline{\CP}_2$. We begin
by fixing a K\"ahler class, normalized by rescaling so that the proper
transform of the projective line between the two blow-up points has area $1$:

\begin{center}
\begin{picture}(240,80)(0,3)
\put(-7,70){\line(1,0){54}}
\put(40,75){\line(2,-3){28}}
\put(0,75){\line(-2,-3){28}}
\put(68,44){\line(-1,-1){52}}
\put(-28,44){\line(1,-1){52}}
\put(20,77){\makebox(0,0){$1$}}
\put(-21,56){\makebox(0,0){$\beta$}}
\put(62,56){\makebox(0,0){$\gamma$}}
\put(-21,13){\makebox(0,0){$\gamma+1$}}
\put(60,13){\makebox(0,0){$\beta+1$}}
\put(100,40){\vector(1,0){50}}
\put(210,0){\line(2,3){52}}
\put(220,0){\line(-2,3){52}}
\put(165,70){\line(1,0){100}}
\put(172.6,70.5){\circle*{4}}
\put(257.4,70.5){\circle*{4}}
\end{picture}
\end{center}

\bigskip 
\noindent 
The pair of positive real numbers  $(\beta, \gamma)$, representing the areas
of the two other exceptional divisors,  now provides us with global 
coordinates on the reduced K\"ahler cone $\check{\zap K}= {\zap K}/\RR^+$, 
thereby giving us a diffeomorphism  $\check{\zap K}\approx \RR^+\times \RR^+$.

Let us take the two blown-up points to be $[1,0,0], [0,1,0] \in \CP_2$, 
and fix the  maximal torus 
$$\left[ \begin{array}{ccc}
e^{i\theta}&&\\&e^{i\phi}&\\&&1
\end{array}\right]$$
in the automophism group. Then, for any $T^2$-invariant metric, 
the moment map of the torus action will take values in a pentagon,
which after translation becomes the following: 
 \begin{center}
\begin{picture}(120,120)(0,0)
\put(0,10){\vector(1,0){125}}
\put(10,0){\vector(0,1){110}}
\put(130,10){\makebox(0,0){$x$}}
\put(10,115){\makebox(0,0){$y$}}
\put(50,0){\makebox(0,0){$\frac{\beta +1}{2\pi}$}}
\put(0,45){\makebox(0,0){$\frac{\gamma +1}{2\pi}$}}
\put(100,27){\makebox(0,0){$\frac{\gamma }{2\pi}$}}
\put(32,90){\makebox(0,0){$\frac{\beta }{2\pi}$}}
\put(10,80){\line(1,0){45}}
\put(55,80){\line(1,-1){35}}
\put(90,10){\line(0,1){35}}
\end{picture}
\end{center}
Let ${\mathfrak F}_1$ and ${\mathfrak F}_2$ be Futaki invariants of
this K\"ahler class with respect to the vector fields with Hamiltonians 
$-x$ and $-y$. Then \cite{ls} for any $T^2$-invariant metric, 
\begin{eqnarray*}
{\mathfrak F}_1&=& \int_M x(s-s_0) d\mu \\
&=& \frac{1}{V}\left[ 
(\beta -2\gamma) (\frac{1}{3} + \gamma + \gamma^2) + \gamma (\gamma - \beta) (2+ \beta + 2\gamma ) 
\right]\\
{\mathfrak F}_2&=& \int_M y(s-s_0) d\mu \\
&=& \frac{1}{V}\left[ 
(\gamma -2\beta) (\frac{1}{3} + \beta + \beta^2) + \beta (\beta - \gamma) (2+ \gamma + 2\beta ) 
\right]\\
\end{eqnarray*}
where 
$$
V= \beta \gamma + \beta + \gamma +  \frac{1}{2}. 
$$

Note that, by Archimedes' principle, 
 the push-forward of the volume measure of $M$ is exactly $4\pi^2$
times the Euclidean measure  on the moment polygon. 
Thus, for example, the average values  $x_0$ and $y_0$ of the Hamiltonians  
 $x$ and $y$ on $M$ are also  the $x$ and $y$ coordinates of
 the barycenter of the moment pentagon. This same observation also 
 makes it straightforward to compute
 the following useful constants:
\begin{eqnarray*}
A&:=& \int_M (x-x_0)^2d\mu \\
&=& \frac{1 +  6 (1 + \beta)[  \beta + \beta^2 + \beta^3+
    \gamma  (1 + 4 \beta + 4 \beta^2 + 2 \beta^3)+
    \gamma^2 (1 + \beta)^3]}{288\pi^2 V} \\
   B&:=&
    \int_M (y-y_0)^2d\mu\\
   &=&  \frac{1 +  6 (1 + \gamma)[  \gamma + \gamma^2 + \gamma^3+
    \beta  (1 + 4 \gamma + 4 \gamma^2 + 2 \gamma^3)+
    \beta^2 (1 + \gamma)^3] }{288\pi^2 V}
  \\  C &:=&  \int_M (x-x_0)(y-y_0)d\mu\\
 &=& -\frac{1 + 6 (1 + \beta) (1+\gamma) (\beta+ \gamma  + 3 \beta   \gamma)}{576\pi^2 V}
\end{eqnarray*}
Then 
$$\| \mathfrak F\|^2=  \frac{B{\mathfrak F}_1^2 - 2C{\mathfrak F}_1{\mathfrak F}_2+A{\mathfrak F}_2^2 }{AB-C^2}~,
$$
and 
$$
{\mathcal A}(\Omega)  = \frac{(c_1\cdot \Omega )^2}{\Omega^2} +  \frac{1}{32\pi^2}\frac{B{\mathfrak F}_1^2 - 2C{\mathfrak F}_1{\mathfrak F}_+A{\mathfrak F}_2^2 }{AB-C^2}
$$
can now be shown to  explicitly be given by 

\bigskip 

\noindent
$\displaystyle 
3 \Big[3 + 28 \gamma + 96 \gamma^2 + 168 \gamma^3 + 164 \gamma^4 + 80 \gamma^5 + 16 \gamma^6 + 
     16 \beta^6 (1 + \gamma)^4 + 
     16 \beta^5 (5 + 24 \gamma + 43 \gamma^2 + 37 \gamma^3 + 15 \gamma^4 + 2 \gamma^5) + 
     4 \beta^4 (41 + 228 \gamma + 478 \gamma^2 + 496 \gamma^3 + 263 \gamma^4 + 60 \gamma^5 + 
        4 \gamma^6) + 
     8 \beta^3 (21 + 135 \gamma + 326 \gamma^2 + 392 \gamma^3 + 248 \gamma^4 + 74 \gamma^5 + 
        8 \gamma^6) + 
     4 \beta (7 + 58 \gamma + 176 \gamma^2 + 270 \gamma^3 + 228 \gamma^4 + 96 \gamma^5 + 16 \gamma^6) + 
     4 \beta^2 (24 + 176 \gamma + 479 \gamma^2 + 652 \gamma^3 + 478 \gamma^4 + 172 \gamma^5 + 
        24 \gamma^6)\Big]\Big/\\
        \Big[ 1 + 10 \gamma + 36 \gamma^2 + 64 \gamma^3 + 60 \gamma^4 + 24 \gamma^5 + 
   24 \beta^5 (1 + \gamma)^5 + 12 \beta^4 (1 + \gamma)^2 (5 + 20 \gamma + 23 \gamma^2 + 10 \gamma^3) + 
   16 \beta^3 (4 + 28 \gamma + 72 \gamma^2 + 90 \gamma^3 + 57 \gamma^4 + 15 \gamma^5) + 
   12 \beta^2 (3 + 24 \gamma + 69 \gamma^2 + 96 \gamma^3 + 68 \gamma^4 + 20 \gamma^5) + 
   2 \beta (5 + 45 \gamma + 144 \gamma^2 + 224 \gamma^3 + 180 \gamma^4 + 60 \gamma^5)\Big]
$

\begin{lem}\label{convex2}
The 
restriction of ${\mathcal A}$ to
a line segment $\beta + \gamma=\mbox{\bf const}$ is always   a strictly convex function
on the interval where  $\beta , \gamma > 0$.  
\end{lem}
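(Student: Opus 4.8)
The plan is to reduce the claimed convexity to the positivity of a single directional second derivative, and then to establish that positivity by exhibiting the relevant numerator as a polynomial with non-negative coefficients. First I would parametrize a segment $\beta + \gamma = c$ by setting $\beta = \beta_0 + t$, $\gamma = \gamma_0 - t$, so that differentiation along the segment is carried out by the operator $D := \partial_\beta - \partial_\gamma$. Convexity of the restriction $f(t) = \mathcal{A}(\beta_0 + t, \gamma_0 - t)$ is then equivalent to the inequality
$$ D^2 \mathcal{A} = \mathcal{A}_{\beta\beta} - 2\mathcal{A}_{\beta\gamma} + \mathcal{A}_{\gamma\gamma} > 0 $$
at every point of the open positive quadrant $\{\beta, \gamma > 0\}$; since the segments foliate this quadrant, proving the inequality on all of it suffices. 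Because the displayed formula for $\mathcal{A}$ is symmetric under $\beta \leftrightarrow \gamma$ (reflecting the interchange of the two blown-up points), the operator $D^2$ is symmetric, so $D^2\mathcal{A}$ is again a symmetric rational function; in particular $f$ is even about the midpoint $\beta = \gamma$, which is therefore automatically a critical point of the restriction.

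Writing $\mathcal{A} = P/Q$ for the explicit numerator and denominator displayed above, the quotient rule gives
$$ D^2\mathcal{A} = \frac{R}{Q^3}, \qquad R := (D^2 P)\,Q^2 - (D^2 Q)\,P Q - 2(DP)(DQ)\,Q + 2P\,(DQ)^2. $$
Two facts then close the argument. First, the denominator $Q$ has constant term $1$ and all of its remaining coefficients, as a polynomial in $\beta$ and $\gamma$, are strictly positive, so $Q > 0$, and hence $Q^3 > 0$, on the closed positive quadrant. Second, it therefore suffices to show that $R > 0$ there. Since $\beta, \gamma > 0$, the cleanest way to certify this is to compute $R$ explicitly and verify that, after full expansion, every one of its coefficients is non-negative, with at least one strictly positive; manifest positivity then follows immediately. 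Exploiting the $\beta \leftrightarrow \gamma$ symmetry, one may equivalently re-express $R$ in the elementary symmetric variables $\sigma = \beta + \gamma$ and $\pi = \beta\gamma$ and check non-negativity of the coefficients there, which, when available, is a more compact certificate.

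The genuine obstacle is entirely computational: $P$ and $Q$ are each of degree roughly ten, so $R$ is a symmetric polynomial of degree close to thirty in two variables, and forming it by hand is infeasible. I would therefore carry out the differentiations and the expansion of $R$ in \emph{Mathematica}, exactly as the explicit formula for $\mathcal{A}$ was obtained, and simply read off the signs of the coefficients. The only way this strategy can fail is if $R$ admits no non-negative-coefficient representation; in that event one would fall back on the evenness of $f$ to restrict attention to $\beta \ge \gamma$ and seek a positive certificate valid on that half — for instance by writing $R$ as a sum of products of the manifestly non-negative quantities $\beta$, $\gamma$, and $(\beta - \gamma)^2$. Given the numerical evidence already amassed by Maschler that $\mathcal{A}$ behaves like a function with a single critical point, I expect the direct coefficient check to succeed outright.
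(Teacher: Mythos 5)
Your proposal is correct and takes essentially the same route as the paper: the published proof likewise machine-computes $\bigl(\frac{\partial}{\partial \beta}-\frac{\partial}{\partial \gamma}\bigr)^2 {\mathcal A}$ as an explicit rational function whose denominator is the cube of the (positive-coefficient) denominator of ${\mathcal A}$, and observes that every coefficient of the resulting numerator and denominator is positive, giving strict positivity on the quadrant $\beta , \gamma > 0$. Your quotient-rule reduction and coefficient-sign certificate are exactly what the paper implements, and the fallback you sketch turns out to be unnecessary, since the direct check succeeds.
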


\begin{proof}
Machine-assisted computation reveals that 
$
\left( \frac{\partial}{\partial \beta}-\frac{\partial}{\partial \gamma}\right)^2 {\mathcal A}$
 is given by 
 
\bigskip 

\noindent 
$\displaystyle
24 \Big[ 2304 \beta^{16} (1 + \gamma)^{12} + 
     6912 \beta^{15} (1 + \gamma)^9 (3 + 11 \gamma + 12 \gamma^2 + 5 \gamma^3) + 
     576 \beta^{14} (1 + \gamma)^6 (167 + 1166 \gamma + 3343 \gamma^2 + 5064 \gamma^3 + 
        4371 \gamma^4 + 2070 \gamma^5 + 431 \gamma^6) + 
     384 \beta^{13} (1 + \gamma)^3 (774 + 7782 \gamma + 35067 \gamma^2 + 92491 \gamma^3 + 
        157494 \gamma^4 +
         180246 \gamma^5 + 139303 \gamma^6 + 70461 \gamma^7 + 
        21276 \gamma^8 + 2940 \gamma^9) +
          (1 + 4 \gamma + 6 \gamma^2 + 4 \gamma^3)^2 (4 + 
        62 \gamma + 457 \gamma^2 + 1922 \gamma^3 + 4910 \gamma^4 +
          7992 \gamma^5 + 8520 \gamma^6 + 
        5976 \gamma^7 + 2808 \gamma^8 + 864 \gamma^9 + 144 \gamma^{10}) + 
      2 \beta (1 + 4 \gamma + 6 \gamma^2 + 4 \gamma^3)^2 (47 + 620 \gamma + 4023 \gamma^2 + 
        15275 \gamma^3 + 35790 \gamma^4 +
          54420 \gamma^5 + 55668 \gamma^6 + 38844 \gamma^7 + 
        18504 \gamma^8 + 5616 \gamma^9 + 864 \gamma^{10}) + 
     96 \beta^{12} (1 + \gamma)^2 (6943 + 76784 \gamma + 389911 \gamma^2 + 1189166 \gamma^3 + 
        2406813 \gamma^4 +
         3382480 \gamma^5 + 3355525 \gamma^6 + 2334978 \gamma^7 + 
        1104488 \gamma^8 + 332712 \gamma^9 + 
          58398 \gamma^{10} + 7572 \gamma^{11} + 
        1938 \gamma^{12} + 312 \gamma^{13} + 24 \gamma^{14}) + 
     96 \beta^{11} (11727 + 153845 \gamma + 945314 \gamma^2 + 3583138 \gamma^3 + 
        9309806 \gamma^4 + 17464406 \gamma^5 + 
        24300890 \gamma^6 + 25383726 \gamma^7 + 
        19929243 \gamma^8 + 11686165 \gamma^9 + 5084700 \gamma^{10} + 
        1671216 \gamma^{11} + 
        457080 \gamma^{12} + 120384 \gamma^{13} + 27936 \gamma^{14} + 4104 \gamma^{15} + 
        288 \gamma^{16}) + 
     16 \beta^{10} (91367 + 1213674 \gamma + 7595262 \gamma^2 + 29525748 \gamma^3 + 
        79271358 \gamma^4 + 
       154934784 \gamma^5 + 
         226825122 \gamma^6 + 
        252558972 \gamma^7 + 215533575 \gamma^8 +
         141830454 \gamma^9 + 
        73002384 \gamma^{10} + 
         30508200 \gamma^{11} + 10969860 \gamma^{12} + 
      3434616 \gamma^{13} + 837216 \gamma^{14} + 129168 \gamma^{15} + 9504 \gamma^{16}) + 
     32 \beta^9 (46177 + 626348 \gamma + 4023003 \gamma^2 + 16156443 \gamma^3 + 
        45149508 \gamma^4 + 92636856 \gamma^5 + 
        143842384 \gamma^6 + 172157314 \gamma^7 + 
        160870371 \gamma^8 + 118918740 \gamma^9 + 70915227 \gamma^{10} + 
        35058495 \gamma^{11} + 14629998 \gamma^{12} + 5009448 \gamma^{13} + 1277280 \gamma^{14} + 
        206064 \gamma^{15} + 15840 \gamma^{16}) + 
     16 \beta^8 (73136 + 1020846 \gamma + 6782175 \gamma^2 + 28350072 \gamma^3 + 
        83038113 \gamma^4 + 179966490 \gamma^5 + 297854048 \gamma^6 + 
        384202536 \gamma^7 + 392256648 \gamma^8 + 321740742 \gamma^9 + 
        215533575 \gamma^{10} + 119575458 \gamma^{11} + 54779814 \gamma^{12} + 
        19939536 \gamma^{13} + 5297652 \gamma^{14} + 890352 \gamma^{15} + 71280 \gamma^{16}) + 
     16 \beta^7 (45406 + 656739 \gamma + 4546158 \gamma^2 + 19916894 \gamma^3 + 
        61505100 \gamma^4 + 141399807 \gamma^5 + 249914844 \gamma^6 + 
        346874028 \gamma^7 + 384202536 \gamma^8 + 344314628 \gamma^9 + 
        252558972 \gamma^{10} + 152302356 \gamma^{11} + 74571048 \gamma^{12} + 
        28478448 \gamma^{13} + 7856640 \gamma^{14} + 1371168 \gamma^{15} + 114048 \gamma^{16}) + 
     \beta^2 (1065 + 20598 \gamma + 195144 \gamma^2 + 1180776 \gamma^3 + 5035692 \gamma^4 + 
        15925800 \gamma^5 + 38527448 \gamma^6 + 72738528 \gamma^7 + 108514800 \gamma^8 + 
        128736096 \gamma^9 + 121524192 \gamma^{10} + 90750144 \gamma^{11} + 
        52840512 \gamma^{12} + 23322240 \gamma^{13} + 7398144 \gamma^{14} + 1513728 \gamma^{15} + 
        152064 \gamma^{16}) + 
     8 \beta^5 (16386 + 259536 \gamma + 1990725 \gamma^2 + 9755446 \gamma^3 + 
        33937098 \gamma^4 + 88398864 \gamma^5 + 177968682 \gamma^6 + 282799614 \gamma^7 + 
        359932980 \gamma^8 + 370547424 \gamma^9 + 309869568 \gamma^{10} + 
        209572872 \gamma^{11} + 112623264 \gamma^{12} + 46332864 \gamma^{13} + 
        13651776 \gamma^{14} + 2550528 \gamma^{15} + 228096 \gamma^{16}) + 
     2 \beta^3 (3769 + 67451 \gamma + 590388 \gamma^2 + 3311320 \gamma^3 + 
        13161720 \gamma^4 + 39021784 \gamma^5 + 89001976 \gamma^6 + 159335152 \gamma^7 + 
        226800576 \gamma^8 + 258503088 \gamma^9 + 236205984 \gamma^{10} + 
        171990624 \gamma^{11} + 98197056 \gamma^{12} + 42587904 \gamma^{13} + 
        13234176 \gamma^{14} + 2630016 \gamma^{15} + 253440 \gamma^{16}) + 
     4 \beta^6 (87921 + 1326312 \gamma + 9631862 \gamma^2 + 44500988 \gamma^3 + 
        145571257 \gamma^4 + 355937364 \gamma^5 + 671754360 \gamma^6 + 
        999659376 \gamma^7 + 1191416192 \gamma^8 + 1150739072 \gamma^9 + 
        907300488 \gamma^{10} + 583221360 \gamma^{11} + 300655152 \gamma^{12} + 
        119521344 \gamma^{13} + 34128576 \gamma^{14} + 6168960 \gamma^{15} + 
        532224 \gamma^{16}) + 
      2 \beta^4 (18413 + 308550 \gamma + 2517846 \gamma^2 + 13161720 \gamma^3 + 
        48864336 \gamma^4 + 135748392 \gamma^5 + 291142514 \gamma^6 + 
        492040800 \gamma^7 + 664304904 \gamma^8 + 722392128 \gamma^9 + 
        634170864 \gamma^{10} + 446870688 \gamma^{11} + 248402688 \gamma^{12} + 
        105206400 \gamma^{13} + 31888800 \gamma^{14} + 6148224 \gamma^{15} + 
        570240 \gamma^{16})\Big]\Big/ \\
        \Big[1 + 10 \gamma + 36 \gamma^2 + 64 \gamma^3 + 60 \gamma^4 + 
   24 \gamma^5 + 24 \beta^5 (1 + \gamma)^5 + 
   12 \beta^4 (1 + \gamma)^2 (5 + 20 \gamma + 23 \gamma^2 + 10 \gamma^3) + 
   16 \beta^3 (4 + 28 \gamma + 72 \gamma^2 + 90 \gamma^3 + 57 \gamma^4 + 15 \gamma^5) + 
    12 \beta^2 (3 + 24 \gamma + 69 \gamma^2 + 96 \gamma^3 + 68 \gamma^4 + 20 \gamma^5) + 
   2 \beta (5 + 45 \gamma + 144 \gamma^2 + 224 \gamma^3 + 180 \gamma^4 + 
   60 \gamma^5)\Big]^3$

\bigskip 
\noindent
Because all the coefficients in both the numerator  and denominator of this expression 
are positive, the second derivative of ${\mathcal A}$ along any line $\beta+\gamma=\mbox{\bf const}$ is strictly positive whenever  $\beta, \gamma > 0$.  This proves the claim. 
\end{proof}

\begin{lem}\label{symmetry2}
 If $(\beta , \gamma )\in \RR^+\times \RR^+\approx \check{\zap K}$
  is a critical point of ${\mathcal A}$,
then $\beta=\gamma$. 
\end{lem}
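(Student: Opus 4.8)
The plan is to combine a reflection symmetry of $\mathcal{A}$ with the strict convexity just established in Lemma \ref{convex2}. The two exceptional divisors of areas $\beta$ and $\gamma$ are interchanged by the biholomorphism of $M=\CP_2\#2\overline{\CP}_2$ induced by the transposition $[z_0:z_1:z_2]\mapsto[z_1:z_0:z_2]$ of $\CP_2$, which swaps the blown-up points $[1,0,0]$ and $[0,1,0]$ while preserving the chosen maximal torus. Since $\mathcal{A}$ is an invariant of the Kähler class under biholomorphisms, this forces $\mathcal{A}(\beta,\gamma)=\mathcal{A}(\gamma,\beta)$. One can also read this off the building blocks above: interchanging $\beta\leftrightarrow\gamma$ swaps $\mathfrak{F}_1\leftrightarrow\mathfrak{F}_2$ and $A\leftrightarrow B$ while leaving $C$, $V$, and the topological term $(c_1\cdot\Omega)^2/\Omega^2$ unchanged, so that both summands of $\mathcal{A}$ are manifestly symmetric.

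First I would fix a constant $c>0$ and parametrize the segment $\beta+\gamma=c$ inside $\RR^+\times\RR^+$ by $t=\beta-\gamma$, so that $\beta=(c+t)/2$ and $\gamma=(c-t)/2$. Writing $f(t):=\mathcal{A}(\beta,\gamma)$ for the restriction of $\mathcal{A}$ to this segment, the chain rule gives $f'(t)=\tfrac12\left(\frac{\partial}{\partial\beta}-\frac{\partial}{\partial\gamma}\right)\mathcal{A}$ and $f''(t)=\tfrac14\left(\frac{\partial}{\partial\beta}-\frac{\partial}{\partial\gamma}\right)^2\mathcal{A}$. Thus Lemma \ref{convex2} says exactly that $f$ is strictly convex, so $f'$ is strictly increasing and has at most one zero. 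Meanwhile the transposition symmetry corresponds to $t\mapsto -t$ (which interchanges $\beta$ and $\gamma$), so $f$ is an even function; hence $f'$ is odd, and in particular $f'(0)=0$.

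Finally, suppose $(\beta_0,\gamma_0)\in\RR^+\times\RR^+$ is a critical point of $\mathcal{A}$. Applying the above with $c=\beta_0+\gamma_0$, the point lies on the corresponding segment at parameter $t_0=\beta_0-\gamma_0$, and criticality of $\mathcal{A}$ implies that the directional derivative along the segment vanishes, i.e. $f'(t_0)=0$. Since $f'$ is strictly increasing with $f'(0)=0$, its unique zero is at $t=0$, so $t_0=0$, that is, $\beta_0=\gamma_0$.

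I do not expect a serious obstacle here; the entire argument rests on the preliminary symmetry input, which is the one point to treat with care. It is the evenness of $f$, and not convexity alone, that pins the critical point to the fixed locus $\beta=\gamma$: convexity by itself merely guarantees a unique critical point on each segment, while the reflection symmetry is what locates that point at the center of symmetry.
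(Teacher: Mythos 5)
Your proof is correct and takes essentially the same route as the paper's: the reflection symmetry $\mathcal{A}(\beta,\gamma)=\mathcal{A}(\gamma,\beta)$, induced by the automorphism swapping the two exceptional divisors, combined with the strict convexity from Lemma \ref{convex2} along the segments $\beta+\gamma=\mbox{const}$. The paper packages the endgame as a contradiction---a critical point $(\beta,\gamma)$ with $\beta\neq\gamma$ would produce a second critical point $(\gamma,\beta)$ on the same segment, violating strict convexity---while you equivalently use evenness of the restricted function to pin its unique critical point at the midpoint; these are the same argument.
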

\begin{proof}
Since there is an automorphism of $\CP_2\# 2\overline{\CP}_2$ which 
interchanges the two exceptional divisors, we automatically have 
 ${\mathcal A}(\beta , \gamma )={\mathcal A}( \gamma , \beta  )$. 
 In particular, reflection across the diagonal sends critical points to critical points. 
Lemma \ref{convex2}, however, implies that there can  be at most one critical 
point on  any  segment $\beta+ \gamma = \mbox{\bf const}$,
$\beta , \gamma > 0$.  If $\beta\neq \gamma$ for some critical point,
we would therefore obtain a contradiction by considering the 
line segment joining $(\beta , \gamma )$ to $( \gamma , \beta  )$. 
Hence every critical point must belong to the diagonal. 
\end{proof}

Using the  the terminology of \cite{chenlebweb}, we have thus 
shown  that any critical point $\Omega\in {\zap K}$ of
${\mathcal A}$ must be a {\em bilaterally symmetric K\"ahler class}.

\bigskip 

To conclude our discussion, we now let $F: \RR^+\to \RR$ be the function
defined by 
$$F(\beta) = {\mathcal A}(\beta , \beta).$$
Explicitly, this function is given by 
$$
F(\beta) =\frac{9 + 96 \beta + 396 \beta^2 + 840 \beta^3 + 954 \beta^4 + 
528 \beta^5 + 96 \beta^6}{1 + 
 12 \beta + 54 \beta^2 + 120 \beta^3 + 138 \beta^4 + 72 \beta^5 + 12 \beta^6}
$$
Because ${\mathcal A}$ is symmetric under $(\beta , \gamma )\leftrightarrow ( \gamma , \beta )$,
Lemma \ref{symmetry2} tells us that 
the critical points of ${\mathcal A}$ are precisely those  $(\beta , \beta )$ 
for which $F^\prime (\beta )=0$. 

\begin{lem} \label{prime2}
The above function satisfies $F^\prime (\beta ) > 0$ for all $\beta\geq 1.2$. 
\end{lem}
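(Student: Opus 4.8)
The plan is to prove the rational inequality $F'(\beta) > 0$ for $\beta \geq 1.2$ by reducing it to a statement about a single polynomial with \emph{non-negative} coefficients, after a shift of variable. First I would write $F(\beta) = P(\beta)/Q(\beta)$, where $P$ and $Q$ are the explicit degree-six numerator and denominator given above, both having strictly positive coefficients. Then $F'(\beta) = \bigl(P'Q - PQ'\bigr)/Q^2$, and since $Q > 0$ for all $\beta > 0$ (again because its coefficients are positive), the sign of $F'(\beta)$ is entirely governed by the numerator $N(\beta) := P'(\beta)Q(\beta) - P(\beta)Q'(\beta)$. This $N$ is a polynomial of degree at most $11$ that one computes once and for all, ideally with machine assistance for reliability.

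The key step is then to show $N(\beta) > 0$ for all $\beta \geq 1.2$. The direct expansion of $N$ will almost certainly have mixed signs among its coefficients, so the inequality is not immediate. The standard device is to substitute $\beta = t + \tfrac{6}{5}$ (so that $\beta \geq 1.2$ corresponds to $t \geq 0$) and re-expand $N(t + \tfrac{6}{5})$ as a polynomial in $t$. The claim I would aim to verify is that, after this shift, \emph{all} coefficients of $\widetilde{N}(t) := N(t + \tfrac{6}{5})$ are non-negative (and the constant term is strictly positive). If that holds, then $\widetilde{N}(t) > 0$ for every $t \geq 0$, hence $N(\beta) > 0$ for every $\beta \geq 1.2$, which gives $F'(\beta) > 0$ on that range and completes the proof. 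This mirrors exactly the positivity-of-coefficients strategy already used in Lemma~\ref{convex2}.

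The main obstacle I anticipate is precisely whether the shifted polynomial $\widetilde{N}(t)$ genuinely has all non-negative coefficients at the chosen shift $\tfrac{6}{5}$. There is no a priori guarantee; the threshold $1.2$ in the statement is presumably tuned so that this works, but it is conceivable that one or two low-order coefficients come out slightly negative, in which case a cleaner argument is needed. If the naive shift fails, the fallback is to bound $\widetilde{N}(t)$ from below by grouping terms: isolate the troublesome low-degree monomials, and dominate their (bounded) negative contribution on $t \geq 0$ by a single higher-degree term with a large positive coefficient, using that on any sub-interval the higher powers of $t$ eventually dominate. A Sturm-sequence count on $[1.2,\infty)$ for $N$ is another rigorous fallback that avoids guessing a shift altogether. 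In practice, however, I expect the plain substitution $\beta \mapsto t + \tfrac{6}{5}$ followed by inspection of coefficient signs to succeed, making the proof a one-line appeal to positivity of coefficients just as in the convexity lemma above.
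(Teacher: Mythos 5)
Your reduction coincides with the paper's: since the denominator of $F$ has positive coefficients, the sign of $F'$ on $\beta>0$ is governed by the numerator $N=P'Q-PQ'$, which the paper computes explicitly (the degree-$11$ leading terms cancel, leaving $N=12\,P$ for a degree-$10$ polynomial $P$ whose coefficients are negative through degree $6$ and positive in degrees $7$ through $10$ --- so your prediction of mixed signs is exactly right). Where you diverge is the final positivity step. The paper does not shift variables; it uses a two-line grouping estimate: for $\beta>1$, each negative monomial $-c_k\beta^k$ ($k\le 6$) is bounded below by $-c_k\beta^6$ and each positive monomial $c_k\beta^k$ ($k\ge 7$) is bounded below by $c_k\beta^7$, whence
$$P(\beta) \;>\; \beta^6\left(1680\,\beta-1968\right),$$
which is positive for $\beta\ge 1.2>1968/1680\approx 1.171$. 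This also demystifies the threshold: $1.2$ is simply a round number above $1968/1680$, not something tuned to make a Taylor shift work. That said, your shift $\beta=t+\tfrac{6}{5}$ would in fact succeed: the shifted coefficients are $P^{(k)}(6/5)/k!$, and these are all strictly positive --- for $k\ge 7$ trivially, since the negative coefficients of $P$ stop at degree $6$, and for $k\le 6$ by direct evaluation (e.g.\ $P(6/5)>0$ already follows from the paper's own bound). So your proposal is correct, and neither your term-grouping fallback nor the Sturm-sequence fallback is needed. The trade-off: the paper's grouping gives a proof verifiable by hand in one line from the displayed $P$, while your certificate requires a second machine computation on top of the one producing $N$; conversely, your all-nonnegative-shifted-coefficients test is mechanical and requires no insight into how to group terms, which makes it the more robust template --- it is, in effect, the systematic version of the ad hoc estimate the paper uses here and in Lemma \ref{doubleprime2}.
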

\begin{proof}
The derivative of $F$ is  given by 
$$\frac{dF}{d\beta}=  \frac{12 ~P(\beta)}{(1 + 12 \beta + 54 \beta^2 + 
  120 \beta^3 + 138 \beta^4 + 72 \beta^5 + 12 \beta^6)^2}$$
  where  the polynomial 
  $$P(\beta) :=
  -1 - 15 \beta - 96 \beta^2 - 336 \beta^3 - 680 \beta^4 - 720 \beta^5 - 120 \beta^6 + 
   624 \beta^7 + 708 \beta^8 + 300 \beta^9 + 48 \beta^{10}
  $$
must of course  always 
 have the same sign as  $F^\prime (\beta )$. When $\beta > 1$, however, we obviously have
 \begin{eqnarray*}
 P(\beta )& >&  -(1 + 15  + 96 + 336  + 680  + 720  + 120) \beta^6 + 
   (624  + 708  + 300  + 48) \beta^7\\
    &=& \beta^6 (1680 ~\beta - 1968)
\end{eqnarray*}
 $$
   $$
  and the claim therefore follows from the fact that $1.2 > 1968/1680$. 
\end{proof}

\begin{lem}  \label{doubleprime2}
The above function satisfies 
$F^{\prime\prime} (\beta ) > 0$ for all $\beta \in (0, 1.2]$. 
\end{lem}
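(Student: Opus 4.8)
The plan is to reduce the claim to the positivity of a single explicit polynomial on the bounded interval $(0,6/5]$, and then to certify that positivity by a substitution that makes it a matter of inspecting signs of coefficients, exactly in the spirit of Lemma~\ref{convex2}. Write $Q(\beta)=1+12\beta+54\beta^2+120\beta^3+138\beta^4+72\beta^5+12\beta^6$ for the denominator of $F$, so that Lemma~\ref{prime2} reads $F'=12P/Q^2$. Differentiating gives
\[
F''(\beta)=\frac{12\bigl(P'(\beta)Q(\beta)-2P(\beta)Q'(\beta)\bigr)}{Q(\beta)^{3}} .
\]
Since $Q$ has strictly positive coefficients, $Q(\beta)^3>0$ for all $\beta>0$, so it suffices to prove that the degree-$15$ numerator $R:=P'Q-2PQ'$ is strictly positive on $(0,6/5]$.

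A machine computation of $R$ shows that $R(0)=9>0$, while its leading coefficient is $-1152<0$; in particular $R$ is \emph{not} a polynomial with all positive coefficients, so the argument of Lemma~\ref{convex2} does not apply, and because $(0,6/5]$ is bounded the degree-splitting estimate used in Lemma~\ref{prime2} is unavailable as well. This eventual change of sign is to be expected, since $F$ tends to its horizontal asymptote $8$ from below and hence $F''$ must eventually turn negative; the genuine content of the lemma is thus that the first positive root of $R$ occurs beyond $\beta=6/5$.

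To establish $R>0$ on the bounded interval I would introduce the substitution $\beta=\frac{(6/5)\,t}{1+t}$, which maps $t\in[0,\infty)$ bijectively onto $\beta\in[0,6/5)$, and clear denominators by setting $\widetilde R(t):=(1+t)^{15}\,R\!\left(\tfrac{(6/5)t}{1+t}\right)$, a polynomial of degree $15$ in $t$. For $t>0$ the sign of $\widetilde R(t)$ agrees with that of $R(\beta)$; moreover the constant term of $\widetilde R$ equals $R(0)=9$ and its top coefficient equals $R(6/5)$. The plan is then to have {\em Mathematica} expand $\widetilde R$ and to observe that every coefficient is positive, whence $\widetilde R(t)>0$ for all $t\ge 0$; this yields $R>0$ on $(0,6/5)$ together with $R(6/5)>0$, and hence $F''>0$ throughout $(0,6/5]$.

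The main obstacle is that this last positivity is sufficient but not automatic: a polynomial can be strictly positive on $[0,6/5]$ while the expanded $\widetilde R$ still exhibits a negative coefficient. The generous choice of endpoint helps here, since Lemma~\ref{prime2} in fact only requires $\beta>1968/1680\approx 1.171$, leaving some room; but should some coefficient of $\widetilde R$ fail to be positive, the remedy is to subdivide $(0,6/5]$ at an interior point and repeat the substitution on each piece, a de~Casteljau-type procedure that terminates precisely because $R$ is strictly positive on the compact interval. Alternatively, and perhaps most transparently, one can bypass the substitution by forming the Sturm sequence of $R$ and checking that its number of sign changes is the same at $\beta=0$ and at $\beta=6/5$; this shows $R$ has no zero in $(0,6/5]$, and since $R(0)=9>0$ the desired positivity follows.
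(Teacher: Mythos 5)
Your reduction is exactly the paper's: you correctly compute $F'' = 12(P'Q-2PQ')/Q^{3}$ (with your $Q$ denoting the denominator of $F$), and your numerator $R := P'Q - 2PQ'$ is precisely the degree-$15$ polynomial that the paper itself calls $Q(\beta)$ in its proof --- your checks $R(0)=9$ and leading coefficient $-1152$ both match, and your observation that the all-positive-coefficients argument of Lemma~\ref{convex2} cannot apply here is also correct. The genuine gap is that the decisive step --- certifying $R>0$ on $(0,1.2]$ --- is never actually carried out. You state a \emph{plan} to expand $\widetilde{R}(t)=(1+t)^{15}R\bigl(\tfrac{(6/5)t}{1+t}\bigr)$ and ``observe'' that all sixteen coefficients are positive, but you concede yourself that this observation may fail and would then require subdivision; neither the expansion, nor any subdivision, nor the alternative Sturm-sequence computation is exhibited. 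As written, the proof is conditional on an unperformed computation, and for a lemma of this kind that computation \emph{is} the entire content. Worse, the fallback you invoke (de~Casteljau subdivision terminating ``precisely because $R$ is strictly positive on the compact interval'') is circular as a proof device: its termination with a positivity certificate is equivalent to the claim to be proved, so appealing to it without executing it establishes nothing.

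For comparison, the paper closes this step with a far more elementary, hand-checkable estimate that needs no Möbius substitution or Sturm theory. All negative coefficients of $R$ occur in degrees $11$ through $15$ and all positive ones in degrees $0$ through $10$; writing $3002509$ and $131832$ for the sum of the positive coefficients and the sum of the absolute values of the negative ones, respectively, one gets $R(\beta)\geq 3002509\,\beta^{10}-131832\,\beta^{11}=(3002509-131832\,\beta)\beta^{10}>0$ on $(0,1]$, and $R(\beta)> 3002509-131832\,\beta^{15}>0$ on $(1,1.2]$ since $(1.2)^{15}<16<3002509/131832$. If you want to repair your write-up with minimal change, this two-line coefficient-grouping argument, splitting at $\beta=1$, is the missing piece; alternatively, actually printing the expanded coefficients of $\widetilde{R}$ (or the Sturm sign counts at $0$ and $6/5$) would convert your plan into a proof. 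Your framework is sound and your choice of interval endpoint sensibly exploits the slack $1.2>1968/1680$ from Lemma~\ref{prime2}, but until the certificate is produced, the lemma remains unproved.
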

\begin{proof}
The second derivative of $F$ is given by 
$$
\frac{d^2F}{d\beta^2}=  \frac{12 ~Q(\beta)}{(1 + 12 \beta + 54 \beta^2 + 
  120 \beta^3 + 138 \beta^4 + 72 \beta^5 + 12 \beta^6)^3}
$$
where the polynomial 
\begin{eqnarray*}
Q(\beta )& =& 9 + 204 \beta + 2142 \beta^2 + 13720 \beta^3 + 59514 \beta^4 + 183672 \beta^5 
 \\&& + 
 412044 \beta^6 +
  672768 \beta^7 + 782892 \beta^8 + 611088 \beta^9 + 264456 \beta^{10} \\&&- 
 2592 \beta^{11}  - 74952 \beta^{12} - 42336 \beta^{13} - 10800 \beta^{14} - 1152 \beta^{15}
\end{eqnarray*}
again has the same sign as $F^{\prime\prime} (\beta )$ in the allowed range $\beta > 0$. 
For $\beta \in (0,1]$, we thus have 
\begin{eqnarray*}
Q(\beta ) &\geq&(9 + 204  + 2142 + 13720  + 59514  + 183672  + 
 412044  +  672768  + 782892  + \\&& 611088  + 264456) \beta^{10} - 
 (2592 + 74952  + 42336  + 10800  + 1152 )\beta^{11}
 \\&=& (3002509 - 131832~\beta) \beta^{10}
\end{eqnarray*}
so that $F^{\prime\prime}(\beta )> 0$ in this range. Similarly, when $\beta > 1$ we have  
\begin{eqnarray*}
Q(\beta ) &> &(9 + 204  + 2142 + 13720  + 59514  + 183672  + 
 412044  +  672768  + 782892  + \\&& 611088  + 264456)  - 
 (2592 + 74952  + 42336  + 10800  + 1152 )\beta^{15}
 \\&=& 3002509 - 131832~\beta^{15}
\end{eqnarray*}
 and so, since $(1.2)^{15} < 16<  30.02509/1.31832$, we also conclude that 
  $F^{\prime\prime}(\beta )>0$ for $\beta \in (1,1.2]$. Combining these
  two arguments now proves the claim. 
  \end{proof}

  \begin{prop} \label{laudate} 
  For $M= \CP_2 \# 2 \overline{\CP}_2$, the 
   function ${\mathcal A}: \check{\zap K} \to \RR$ has exactly one 
   critical point. Moreover, this critical point is an   absolute minimum.
     \end{prop}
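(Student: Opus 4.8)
The plan is to push everything down to the one-variable function $F(\beta)={\mathcal A}(\beta,\beta)$ on the diagonal. By Lemma~\ref{symmetry2} every critical point of ${\mathcal A}$ already lies on the diagonal, and the discussion preceding this proposition records that $(\beta,\beta)$ is critical for ${\mathcal A}$ precisely when $F'(\beta)=0$. Consequently the critical points of ${\mathcal A}$ are in bijection with the zeros of $F'$ on $\RR^+$, so counting critical points of ${\mathcal A}$ amounts to counting zeros of the one-variable derivative $F'$.

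First I would nail down that $F'$ has exactly one zero. Lemma~\ref{doubleprime2} gives $F''>0$ on $(0,1.2]$, so $F'$ is strictly increasing there and can vanish at most once; Lemma~\ref{prime2} gives $F'>0$ on $[1.2,\infty)$, so there are no further zeros. Hence $F'$ has at most one zero on all of $\RR^+$, which yields uniqueness. For existence I would use the explicit formula $F'(\beta)=12\,P(\beta)/(1+12\beta+\cdots)^2$ from the proof of Lemma~\ref{prime2}: since $P(0)=-1<0$, the derivative $F'$ is negative near $\beta=0$, while $F'(1.2)>0$ by Lemma~\ref{prime2}, so the intermediate value theorem produces a zero $\beta_0\in(0,1.2)$. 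This shows ${\mathcal A}$ has exactly one critical point, namely $(\beta_0,\beta_0)$. The same sign analysis shows $F'<0$ on $(0,\beta_0)$ and $F'>0$ on $(\beta_0,\infty)$, so $\beta_0$ is in fact the global minimum of $F$ on $\RR^+$.

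To upgrade this diagonal minimum to an absolute minimum of ${\mathcal A}$ on the whole reduced cone, I would combine the reflection symmetry with Lemma~\ref{convex2}. Given any $(\beta,\gamma)$, set $c=\beta+\gamma$ and restrict ${\mathcal A}$ to the segment $\{\,\beta'+\gamma'=c,\ \beta',\gamma'>0\,\}$. On this segment ${\mathcal A}$ is strictly convex by Lemma~\ref{convex2}, and because ${\mathcal A}(\beta',\gamma')={\mathcal A}(\gamma',\beta')$ it is symmetric under the reflection fixing the midpoint $(c/2,c/2)$. A strictly convex function that is symmetric about the center of its domain is minimized at that center, so ${\mathcal A}(\beta,\gamma)\geq {\mathcal A}(c/2,c/2)=F(c/2)\geq F(\beta_0)={\mathcal A}(\beta_0,\beta_0)$, with equality only at $(\beta_0,\beta_0)$. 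This proves that the unique critical point is the absolute minimum.

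I do not anticipate a genuine obstacle here, since the heavy analytic content is already sealed inside Lemmas~\ref{convex2}, \ref{prime2}, and \ref{doubleprime2}. The two places demanding a little care are the existence step—where one must confirm the sign of $F'$ near $\beta=0$, which follows immediately from $P(0)=-1$—and the conceptual crux of the argument, namely the elementary observation that a strictly convex, centrally symmetric function is minimized at its center. It is precisely this observation, applied along the family $\beta+\gamma=\text{const}$, that lets the one-dimensional conclusion on the diagonal propagate to a global statement on the two-dimensional cone.
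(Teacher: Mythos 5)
Your proposal is correct and takes essentially the same route as the paper's own proof: reduction to the diagonal via Lemma~\ref{symmetry2}, uniqueness and existence of the zero of $F'$ from Lemmas~\ref{prime2} and \ref{doubleprime2} together with the negativity of $F'$ near $\beta=0$, and the midpoint convexity-plus-symmetry inequality of Lemma~\ref{convex2} to promote the diagonal minimum to the absolute minimum of ${\mathcal A}$. Your ``strictly convex, centrally symmetric function is minimized at its center'' formulation is exactly the paper's averaging step ${\mathcal A}(\beta,\gamma)=\tfrac{1}{2}\left[{\mathcal A}(\beta,\gamma)+{\mathcal A}(\gamma,\beta)\right]\geq F\!\left(\tfrac{\beta+\gamma}{2}\right)$ in different words.
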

  \begin{proof} By Lemma \ref{symmetry2}, any critical point belongs
  to the line $\beta=\gamma$. Moreover, Lemma \ref{prime2} 
  says that such a critical point would necessarily satisfy $\beta \in (0,1.2]$.
  However, $F^\prime$ can only have one zero on $(0,1.2]$ by Lemma \ref{doubleprime2},
  and the uniqueness of the critical point is therefore assured. 
  
  The fact that 
  such a critical point does  actually exist  follows from the observation 
  that $\lim_{\beta\to 0}F^\prime (\beta ) = -12 < 0$, whereas 
  $F^{\prime}(1.2) > 0$ by Lemma \ref{prime2}. Moreover, since
  this unique critical point is a local minimum, it must actually be 
  the absolute minimum of $F$ on $(0,\infty )$.  However, 
    Lemma \ref{convex2} implies that 
  $${\mathcal A}(\beta , \gamma ) = \frac{{\mathcal A}(\beta , \gamma )+ {\mathcal A}( \gamma , \beta )}{2} 
   \geq {\mathcal A}(\frac{\beta + \gamma}{2} ,  \frac{\beta + \gamma}{2})= F (\frac{\beta + \gamma}{2})$$
  for any $\beta, \gamma > 0$. Thus  the absolute minimum of $F$   necessarily
 also represents the absolute minimum of ${\mathcal A}$.
  \end{proof}
  
  Theorem \ref{uni2} now follows. Indeed, 
   by Proposition \ref{aarhus}, 
  any conformally Einstein K\"ahler metric
  $\tilde{g}$ on  $M=\CP\# 2 \overline{\CP}_2$ is extremal, 
  and, 
  by Proposition \ref{critical}, 
  it must moreover  belong to a
  K\"ahler class $\Omega$ which is a critical point 
  of ${\mathcal A}$. However, by Proposition \ref{laudate}, 
   this critical K\"ahler class $\Omega$ is
  unique up to scale. Consequently,  the K\"ahler class of some 
  multiple of $\tilde{g}$ must coincide 
  with the K\"ahler class of 
   the conformally Einstein K\"ahler metric
  $g$ constructed in \cite{chenlebweb}. 
  On the other hand,  extremal K\"ahler metrics in any  
  fixed K\"ahler class are known  \cite{xxgang2} to be unique up to complex automorphisms;
  cf. \cite{donaldsonk1,mabuniq}. In particular, $(M,g)$ must be isometric 
  to $(M, c\tilde{g})$ for some positive constant $c$. 
  Moreover, since $\Omega$ actually minimizes ${\mathcal A}$
  by Proposition n \ref{laudate}, inequality (\ref{sharp}) implies that 
  $g$ actually minimizes ${\mathcal C}$ among all K\"ahler metrics on 
  $M=\CP\# 2 \overline{\CP}_2$.

\section{The case of $\CP\# 3 \overline{\CP}_2$} \label{comp3}

We now show that 
$${\mathcal A}: \check{\zap K}\to \RR$$
also  has only one critical point
  when  $M=\CP\# 3 \overline{\CP}_2$. 
First recall  that the general K\"ahler class
on this manifold is determined by four real numbers:

 \begin{center}
\begin{picture}(240,80)(0,3) 
\put(-7,70){\line(1,0){54}}
\put(40,75){\line(2,-3){28}}
\put(0,75){\line(-2,-3){28}}
\put(-7,5){\line(1,0){54}}
\put(40,0){\line(2,3){28}}
\put(0,0){\line(-2,3){28}}
\put(20,0){\makebox(0,0){$\alpha$}}
\put(20,77){\makebox(0,0){$\alpha+\delta$}}
\put(-21,56){\makebox(0,0){$\beta$}}
\put(62,56){\makebox(0,0){$\gamma$}}
\put(-29,18){\makebox(0,0){$\gamma+\delta$}}
\put(72,18){\makebox(0,0){$\beta+\delta$}}
\put(100,40){\vector(1,0){50}}
\put(210,0){\line(2,3){52}}
\put(220,0){\line(-2,3){52}}
\put(165,70){\line(1,0){100}}
\put(215,7.5){\circle*{3}}
\put(172.6,70.5){\circle*{3}}
\put(257.4,70.5){\circle*{3}}
\end{picture}
\end{center}
Let us decompose the reduced K\"ahler cone $\check{\zap K}={\zap K}/\RR^+$ into
the open set ${\zap U}$ corresponding to $\delta > 0$, a second open set ${\zap U}^\prime$
corresponding to $\delta < 0$, and a lower-dimensional interface ${\zap P}$ 
cut out by the hyper-plane $\delta=0$. Each element of ${\zap U}$
then has a unique representative with $\delta =1$, and the remaining
positive real numbers $(\alpha, \beta, \gamma)$ then provide global coordinates 
on ${\zap U}$, which is thereby identified with $(\RR^+)^3$. The region 
${\zap U}^\prime$ is actually the image of ${\zap U}$ under a Cremona 
transformation $\Phi$, where $\Phi$ is the automorphism of  of $\CP\# 3 \overline{\CP}_2$
induced by the 
bimeromorphic transformation 
$$
\left[ z_1 : z_2 : z_3 \right] \longmapsto \left[ \frac{1}{z_1} : \frac{1}{z_2} : \frac{1}{z_3} \right]
$$
of $\CP_2$. Since $\mathcal A$ is invariant under automorphisms, 
it follows that we can completely understand its behavior on ${\zap U}^\prime$
by thoroughly understanding its behavior  on our coordinate domain ${\zap U}$. 
The behavior of $\mathcal A$ on the
interface ${\zap P}$ will of course call for a separate, careful  discussion. 

 For the present, however, let us  focus on our coordinate region 
 ${\zap U}$ in the reduced K\"ahler cone $\check{\zap K}$. 
 We now once again fix the $2$-torus in the automorphism group 
 corresponding to $[z_1:z_2:z_3]\mapsto [e^{i\theta}z_1:e^{i\phi}z_2:z_3]$.
 The image of $M$ under the moment map is then the hexagon 
 \begin{center}
\begin{picture}(120,120)(0,0)
\put(0,10){\vector(1,0){125}}
\put(10,0){\vector(0,1){110}}
\put(130,10){\makebox(0,0){$x$}}
\put(10,115){\makebox(0,0){$y$}}
\put(22,0){\makebox(0,0){$\frac{\alpha}{2\pi}$}}
\put(0,22){\makebox(0,0){$\frac{\alpha}{2\pi}$}}
\put(62,0){\makebox(0,0){$\frac{\beta +1}{2\pi}$}}
\put(0,57){\makebox(0,0){$\frac{\gamma +1}{2\pi}$}}
\put(100,27){\makebox(0,0){$\frac{\gamma }{2\pi}$}}
\put(32,90){\makebox(0,0){$\frac{\beta }{2\pi}$}}
\put(10,35){\line(1,-1){25}}
\put(10,80){\line(1,0){45}}
\put(55,80){\line(1,-1){35}}
\put(90,10){\line(0,1){35}}
\end{picture}
\end{center}
and our formulas \cite{ls} for the components of the Futaki invariant become 
\begin{eqnarray*}
{\mathfrak F}_1&=& \int_M x(s-s_0) d\mu \\
&=& \frac{1}{V}\left[ 
(\alpha+\beta -2\gamma) (\frac{1}{3} + \gamma + \gamma^2) + (\gamma-\alpha) (\gamma - \beta) (2+\alpha+ \beta + 2\gamma ) 
\right]\\
{\mathfrak F}_2&=& \int_M y(s-s_0) d\mu \\
&=& \frac{1}{V}\left[ 
(\alpha+ \gamma -2\beta) (\frac{1}{3} + \beta + \beta^2) + (\beta-\alpha) (\beta - \gamma) (2+ \alpha+\gamma + 2\beta ) 
\right]\\
\end{eqnarray*}
where 
$$
V= \alpha \beta+ \alpha \gamma + \beta \gamma + \alpha + \beta + \gamma +  \frac{1}{2}
$$
is the volume of $(M,\Omega)$. 
Three other essential coefficients needed in our computation are 
\begin{eqnarray*}
A&:=& \int_M (x-x_0)^2d\mu
 \\  &=& (288\pi^2 V)^{-1}
   \Big[1 + 6 \beta + 12 \beta^2 + 12 \beta^3 + 6 \beta^4 + 
  6 \gamma^2 (1 + \beta)^4 + 
  6 \alpha^4 (1 + \gamma + \beta)^2 +
    \\&& 6 \gamma (1 + 5 \beta + 8 \beta^2 + 6 \beta^3 + 2 \beta^4) + 
  6 \alpha^2 (2 + 8 \beta + 9 \beta^2 + 4 \beta^3 + \beta^4 + 6 \gamma^2 (1 + \beta)^2 + 
\\&&     2 \gamma (2 + \beta)^2 (1 + 2 \beta)) + 
  12 \alpha^3 (1 + 3 \beta + 2 \beta^2 + 2 \gamma^2 (1 + \beta) + \gamma (3 + 6 \beta + 2 \beta^2)) + 
\\&&   6 \alpha (1 + 5 \beta + 8 \beta^2 + 6 \beta^3 + 2 \beta^4 + 4 \gamma^2 (1 + \beta)^3 + 
     \gamma (5 + 20 \beta + 24 \beta^2 + 12 \beta^3 + 2 \beta^4))\Big]
  \end{eqnarray*}
 \begin{eqnarray*} 
      B&:=&
    \int_M (y-y_0)^2d\mu\\
   &=& (288\pi^2 V)^{-1}
   \Big[1 + 6 \gamma + 12 \gamma^2 + 12 \gamma^3 + 6 \gamma^4 + 
  6 \beta^2 (1 + \gamma)^4 + 
  6 \alpha^4 (1 + \beta + \gamma)^2 +
    \\&& 6 \beta (1 + 5 \gamma + 8 \gamma^2 + 6 \gamma^3 + 2 \gamma^4) + 
  6 \alpha^2 (2 + 8 \gamma + 9 \gamma^2 + 4 \gamma^3 + \gamma^4 + 6 \beta^2 (1 + \gamma)^2 + 
\\&&     2 \beta (2 + \gamma)^2 (1 + 2 \gamma)) + 
  12 \alpha^3 (1 + 3 \gamma + 2 \gamma^2 + 2 \beta^2 (1 + \gamma) + \beta (3 + 6 \gamma + 2 \gamma^2)) + 
\\&&   6 \alpha (1 + 5 \gamma + 8 \gamma^2 + 6 \gamma^3 + 2 \gamma^4 + 4 \beta^2 (1 + \gamma)^3 + 
     \beta (5 + 20 \gamma + 24 \gamma^2 + 12 \gamma^3 + 2 \gamma^4))\Big]
\end{eqnarray*}   
  and 
 \begin{eqnarray*} 
   C &:=&  \int_M (x-x_0)(y-y_0)d\mu\\
 &=& -(576\pi^2 V)^{-1} \Big[1 + 6 \gamma + 6 \gamma^2 + 12 \alpha^4 (1 + \beta + \gamma)^2 +
  6 \beta^2 (1 + 4 \gamma + 3 \gamma^2) + 
\\&&  6 \beta (1 + 5 \gamma + 4 \gamma^2) + 
 24 \alpha^3 (1 + 3 \gamma + 2 \gamma^2 + 2 \beta^2 (1 + \gamma) + \beta (3 + 6 \gamma + 2 \gamma^2)) + 
 \\&& 18 \alpha^2 (1 + 4 \gamma + 3 \gamma^2 + \beta^2 (3 + 6 \gamma + 2 \gamma^2) + 
    2 \beta (2 + 6 \gamma + 3 \gamma^2)) + 
\\&&  6 \alpha (1 + 5 \gamma + 4 \gamma^2 + 2 \beta^2 (2 + 6 \gamma + 3 \gamma^2) + 
    \beta (5 + 20 \gamma + 12 \gamma^2))\Big]
\end{eqnarray*}
We then have
$$
\| {\mathfrak F}\|^2 = \frac{B{\mathfrak F}_1^2 - 2C{\mathfrak F}_1{\mathfrak F}_+A{\mathfrak F}_2^2 }{AB-C^2}~.
$$
This in turn allows  us to compute 
$${\mathcal A} = \frac{(c_1\cdot \Omega)^2}{\Omega^2}+ \frac{1}{32\pi^2}  \frac{B{\mathfrak F}_1^2 - 2C{\mathfrak F}_1{\mathfrak F}_+A{\mathfrak F}_2^2 }{AB-C^2}~,$$
which, upon simplification, is explicitly given by 
\bigskip

{\footnotesize 
\noindent
$\displaystyle
3 \Big[ 3 + 28 \gamma + 96 \gamma^2 + 168 \gamma^3 + 164 \gamma^4 + 80 \gamma^5 + 16 \gamma^6 + 
     16 \beta^6 (1 + \gamma)^4 + 16 \alpha^6 (1 + \beta + \gamma)^4 + 
     16 \beta^5 (5 + 24 \gamma + 43 \gamma^2 + 37 \gamma^3 + 15 \gamma^4 + 2 \gamma^5) + 
     4 \beta^4 (41 + 228 \gamma + 478 \gamma^2 + 496 \gamma^3 + 263 \gamma^4 + 60 \gamma^5 + 
        4 \gamma^6) + 
     8 \beta^3 (21 + 135 \gamma + 326 \gamma^2 + 392 \gamma^3 + 248 \gamma^4 + 74 \gamma^5 + 
        8 \gamma^6) + 
     4 \beta (7 + 58 \gamma + 176 \gamma^2 + 270 \gamma^3 + 228 \gamma^4 + 96 \gamma^5 + 16 \gamma^6) + 
     4 \beta^2 (24 + 176 \gamma + 479 \gamma^2 + 652 \gamma^3 + 478 \gamma^4 + 172 \gamma^5 + 
        24 \gamma^6) + 
     16 \alpha^5 (5 + 2 \beta^5 + 24 \gamma + 43 \gamma^2 + 37 \gamma^3 + 15 \gamma^4 + 2 \gamma^5 + 
        \beta^4 (15 + 14 \gamma) + \beta^3 (37 + 70 \gamma + 30 \gamma^2) + 
        \beta^2 (43 + 123 \gamma + 108 \gamma^2 + 30 \gamma^3) + 
        \beta (24 + 92 \gamma + 123 \gamma^2 + 70 \gamma^3 + 14 \gamma^4)) + 
     4 \alpha^4 (41 + 4 \beta^6 + 228 \gamma + 478 \gamma^2 + 496 \gamma^3 + 263 \gamma^4 + 
        60 \gamma^5 + 4 \gamma^6 + \beta^5 (60 + 56 \gamma) + 
        \beta^4 (263 + 476 \gamma + 196 \gamma^2) + 
        8 \beta^3 (62 + 169 \gamma + 139 \gamma^2 + 35 \gamma^3) + 
        2 \beta^2 (239 + 876 \gamma + 1089 \gamma^2 + 556 \gamma^3 + 98 \gamma^4) + 
        4 \beta (57 + 263 \gamma + 438 \gamma^2 + 338 \gamma^3 + 119 \gamma^4 + 14 \gamma^5)) + 
     8 \alpha^3 (21 + 135 \gamma + 326 \gamma^2 + 392 \gamma^3 + 248 \gamma^4 + 74 \gamma^5 + 
        8 \gamma^6 + 8 \beta^6 (1 + \gamma) + 2 \beta^5 (37 + 70 \gamma + 30 \gamma^2) + 
        4 \beta^4 (62 + 169 \gamma + 139 \gamma^2 + 35 \gamma^3) + 
        4 \beta^3 (98 + 353 \gamma + 428 \gamma^2 + 210 \gamma^3 + 35 \gamma^4) + 
        2 \beta^2 (163 + 735 \gamma + 1179 \gamma^2 + 856 \gamma^3 + 278 \gamma^4 + 30 \gamma^5) + 
        \beta (135 + 736 \gamma + 1470 \gamma^2 + 1412 \gamma^3 + 676 \gamma^4 + 140 \gamma^5 + 
           8 \gamma^6)) + 
     4 \alpha (7 + 58 \gamma + 176 \gamma^2 + 270 \gamma^3 + 228 \gamma^4 + 96 \gamma^5 + 16 \gamma^6 + 
        16 \beta^6 (1 + \gamma)^3 + 
        4 \beta^5 (24 + 92 \gamma + 123 \gamma^2 + 70 \gamma^3 + 14 \gamma^4) + 
        4 \beta^4 (57 + 263 \gamma + 438 \gamma^2 + 338 \gamma^3 + 119 \gamma^4 + 14 \gamma^5) + 
        2 \beta^3 (135 + 736 \gamma + 1470 \gamma^2 + 1412 \gamma^3 + 676 \gamma^4 + 
           140 \gamma^5 + 8 \gamma^6) + 
        4 \beta^2 (44 + 278 \gamma + 645 \gamma^2 + 735 \gamma^3 + 438 \gamma^4 + 123 \gamma^5 + 
           12 \gamma^6) + 
        2 \beta (29 + 210 \gamma + 556 \gamma^2 + 736 \gamma^3 + 526 \gamma^4 + 184 \gamma^5 + 
           24 \gamma^6)) + 
     4 \alpha^2 (24 + 176 \gamma + 479 \gamma^2 + 652 \gamma^3 + 478 \gamma^4 + 172 \gamma^5 + 
        24 \gamma^6 + 24 \beta^6 (1 + \gamma)^2 + 
        4 \beta^5 (43 + 123 \gamma + 108 \gamma^2 + 30 \gamma^3) + 
        2 \beta^4 (239 + 876 \gamma + 1089 \gamma^2 + 556 \gamma^3 + 98 \gamma^4) + 
        4 \beta^3 (163 + 735 \gamma + 1179 \gamma^2 + 856 \gamma^3 + 278 \gamma^4 + 30 \gamma^5) + 
        4 \beta (44 + 278 \gamma + 645 \gamma^2 + 735 \gamma^3 + 438 \gamma^4 + 123 \gamma^5 + 
           12 \gamma^6) + 
        \beta^2 (479 + 2580 \gamma + 5058 \gamma^2 + 4716 \gamma^3 + 2178 \gamma^4 + 
           432 \gamma^5 + 24 \gamma^6))\Big]\Big/ \\
           \Big[ 1 + 10 \gamma + 36 \gamma^2 + 64 \gamma^3 + 
   60 \gamma^4 + 24 \gamma^5 + 24 \beta^5 (1 + \gamma)^5 + 24 \alpha^5 (1 + \beta + \gamma)^5 + 
   12 \beta^4 (1 + \gamma)^2 (5 + 20 \gamma + 23 \gamma^2 + 10 \gamma^3) + 
   16 \beta^3 (4 + 28 \gamma + 72 \gamma^2 + 90 \gamma^3 + 57 \gamma^4 + 15 \gamma^5) + 
   12 \beta^2 (3 + 24 \gamma + 69 \gamma^2 + 96 \gamma^3 + 68 \gamma^4 + 20 \gamma^5) + 
   2 \beta (5 + 45 \gamma + 144 \gamma^2 + 224 \gamma^3 + 180 \gamma^4 + 60 \gamma^5) + 
   12 \alpha^4 (1 + \beta + \gamma)^2 (5 + 20 \gamma + 23 \gamma^2 + 10 \gamma^3 + 10 \beta^3 (1 + \gamma) +
       \beta^2 (23 + 46 \gamma + 16 \gamma^2) + 2 \beta (10 + 30 \gamma + 23 \gamma^2 + 5 \gamma^3)) + 
   16 \alpha^3 (4 + 28 \gamma + 72 \gamma^2 + 90 \gamma^3 + 57 \gamma^4 + 15 \gamma^5 + 
      15 \beta^5 (1 + \gamma)^2 + 3 \beta^4 (19 + 57 \gamma + 50 \gamma^2 + 13 \gamma^3) + 
      3 \beta^3 (30 + 120 \gamma + 155 \gamma^2 + 78 \gamma^3 + 13 \gamma^4) + 
      3 \beta^2 (24 + 120 \gamma + 206 \gamma^2 + 155 \gamma^3 + 50 \gamma^4 + 5 \gamma^5) + 
      \beta (28 + 168 \gamma + 360 \gamma^2 + 360 \gamma^3 + 171 \gamma^4 + 30 \gamma^5)) + 
   12 \alpha^2 (3 + 24 \gamma + 69 \gamma^2 + 96 \gamma^3 + 68 \gamma^4 + 20 \gamma^5 + 
      20 \beta^5 (1 + \gamma)^3 + 
      \beta^4 (68 + 272 \gamma + 366 \gamma^2 + 200 \gamma^3 + 36 \gamma^4) + 
      4 \beta^3 (24 + 120 \gamma + 206 \gamma^2 + 155 \gamma^3 + 50 \gamma^4 + 5 \gamma^5) + 
      2 \beta (12 + 84 \gamma + 207 \gamma^2 + 240 \gamma^3 + 136 \gamma^4 + 30 \gamma^5) + 
      \beta^2 (69 + 414 \gamma + 864 \gamma^2 + 824 \gamma^3 + 366 \gamma^4 + 60 \gamma^5)) + 
   2 \alpha (5 + 45 \gamma + 144 \gamma^2 + 224 \gamma^3 + 180 \gamma^4 + 60 \gamma^5 + 
      60 \beta^5 (1 + \gamma)^4 + 
      12 \beta^4 (15 + 75 \gamma + 136 \gamma^2 + 114 \gamma^3 + 43 \gamma^4 + 5 \gamma^5) + 
      12 \beta^2 (12 + 84 \gamma + 207 \gamma^2 + 240 \gamma^3 + 136 \gamma^4 + 30 \gamma^5) + 
      8 \beta^3 (28 + 168 \gamma + 360 \gamma^2 + 360 \gamma^3 + 171 \gamma^4 + 30 \gamma^5) + 
      3 \beta (15 + 120 \gamma + 336 \gamma^2 + 448 \gamma^3 + 300 \gamma^4 + 80 \gamma^5))\Big]
$}

\begin{lem} \label{convex3} 
The function obtained by restricting ${\mathcal A}$ to any line of the form 
$\alpha + \beta = \mbox{\bf const}$,  $\gamma = \widehat{\mbox{\bf const}}$
is strictly convex on the segment  $\alpha, \beta  > 0$. 
\end{lem}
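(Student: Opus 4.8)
The plan is to argue exactly as in the proof of Lemma~\ref{convex2}. A line of the form $\alpha + \beta = \mathbf{const}$, $\gamma = \widehat{\mathbf{const}}$ has tangent vector proportional to $\partial_\alpha - \partial_\beta$, so the asserted strict convexity is equivalent to the strict positivity of the second directional derivative $\left( \frac{\partial}{\partial\alpha} - \frac{\partial}{\partial\beta}\right)^2 {\mathcal A}$ throughout the open octant $\alpha, \beta, \gamma > 0$. First I would feed the explicit rational expression for ${\mathcal A}$ displayed above into a computer algebra system such as \emph{Mathematica} and compute this second directional derivative, simplifying the outcome into a single rational function of $(\alpha, \beta, \gamma)$.

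The denominator of ${\mathcal A}$ already visibly has all positive coefficients, and is therefore strictly positive on the positive octant; after differentiating twice in the $\partial_\alpha - \partial_\beta$ direction, the result is again a ratio whose denominator is the cube of this positive polynomial, and so remains positive wherever $\alpha, \beta, \gamma > 0$. Everything thus reduces to the numerator, which is a polynomial in $\alpha, \beta, \gamma$. Following the phenomenon already observed in Lemma~\ref{convex2}, I expect that once this numerator is expanded and its monomials are collected, every one of its coefficients turns out to be positive. At that point its strict positivity on the octant $\alpha, \beta, \gamma > 0$ is immediate, and the strict convexity of the restriction of ${\mathcal A}$ follows at once.

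The main obstacle here is computational rather than conceptual. Passing from the two variables of Section~\ref{comp2} to three, and from the degrees appearing there to the substantially higher degrees that now occur, inflates the symbolic expression for $\left(\partial_\alpha - \partial_\beta\right)^2 {\mathcal A}$ to an enormous size, so the genuine difficulty is carrying out and verifying the expansion reliably. More delicately, the whole argument hinges on the numerator having uniformly nonnegative coefficients: a single negative coefficient would defeat the naive positivity test and force a finer, region-by-region estimate of the sort used in Lemmas~\ref{prime2} and \ref{doubleprime2}. Hence the decisive step, and the one I would check most carefully, ideally by recomputing the derivative in more than one way, is confirming that the fully expanded numerator is a polynomial with no negative coefficients.
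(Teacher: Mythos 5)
Your proposal is correct and follows essentially the same route as the paper: LeBrun likewise machine-computes $\left(\frac{\partial}{\partial\alpha}-\frac{\partial}{\partial\beta}\right)^2{\mathcal A}$ as a single rational function whose denominator is the cube of the (positive-coefficient) denominator of ${\mathcal A}$, and then observes that every coefficient of the expanded numerator is positive, exactly the phenomenon you predicted. The finer region-by-region fallback you contemplate turns out to be unnecessary here, just as in Lemma~\ref{convex2}.
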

\begin{proof} Mechanically computing $\displaystyle \left(\frac{\partial}{\partial \alpha} -\frac{\partial}{\partial \beta}\right)^2 {\mathcal A}$, we obtain

{\footnotesize
\noindent
$\displaystyle 
12 \Big[ 4608 \beta^{16} (1 + \gamma)^{12} + 4608 \alpha^{16} (1 + \beta + \gamma)^{12} + 
     4608 \beta^{15} (1 + \gamma)^9 (10 + 39 \gamma + 48 \gamma^2 + 25 \gamma^3 + 3 \gamma^4) + 
     1152 \beta^{14} (1 + \gamma)^6 (197 + 1542 \gamma + 4933 \gamma^2 + 8484 \gamma^3 + 
        8617 \gamma^4 + 5190 \gamma^5 + 1721 \gamma^6 + 252 \gamma^7 + 12 \gamma^8) + (1 + 
        4 \gamma + 6 \gamma^2 + 4 \gamma^3)^2 (7 + 146 \gamma + 1240 \gamma^2 + 5568 \gamma^3 + 
        14520 \gamma^4 + 22992 \gamma^5 + 22176 \gamma^6 + 12288 \gamma^7 + 3312 \gamma^8 + 
        288 \gamma^9) + 
     384 \beta^{13} (1 + \gamma)^3 (1861 + 21942 \gamma + 113487 \gamma^2 + 341094 \gamma^3 + 
        664392 \gamma^4 + 882156 \gamma^5 + 813518 \gamma^6 + 518262 \gamma^7 + 
        221247 \gamma^8 + 59358 \gamma^9 + 8895 \gamma^{10} + 612 \gamma^{11} + 12 \gamma^{12}) + 
     192 \beta^{12} (1 + \gamma)^2 (8250 + 113750 \gamma + 696062 \gamma^2 + 2508168 \gamma^3 + 
        5950477 \gamma^4 + 9822592 \gamma^5 + 11586899 \gamma^6 + 9839926 \gamma^7 + 
        5959811 \gamma^8 + 2503386 \gamma^9 + 691353 \gamma^{10} + 114366 \gamma^{11} + 
        9684 \gamma^{12} + 312 \gamma^{13}) + 
     192 \beta^{11} (13533 + 227527 \gamma + 1729020 \gamma^2 + 7895570 \gamma^3 + 
        24283812 \gamma^4 + 53371034 \gamma^5 + 86645330 \gamma^6 + 105734746 \gamma^7 + 
        97570741 \gamma^8 + 67802847 \gamma^9 + 34957058 \gamma^{10} + 12997772 \gamma^{11} + 
        3323034 \gamma^{12} + 539904 \gamma^{13} + 48744 \gamma^{14} + 1872 \gamma^{15}) + 
     2 \beta (83 + 2301 \gamma + 28494 \gamma^2 + 209856 \gamma^3 + 1033632 \gamma^4 + 
        3628668 \gamma^5 + 9431904 \gamma^6 + 18562128 \gamma^7 + 27957616 \gamma^8 + 
        32247792 \gamma^9 + 28207200 \gamma^{10} + 18287232 \gamma^{11} + 8424576 \gamma^{12} + 
        2555712 \gamma^{13} + 440064 \gamma^{14} + 29952 \gamma^{15}) + 
     32 \beta^{10} (101571 + 1813896 \gamma + 14604432 \gamma^2 + 70513188 \gamma^3 + 
        228947238 \gamma^4 + 530714796 \gamma^5 + 908577086 \gamma^6 + 
        1170005160 \gamma^7 + 1141232271 \gamma^8 + 840744108 \gamma^9 + 
        461642418 \gamma^{10} + 184071084 \gamma^{11} + 50987928 \gamma^{12} + 
        9116424 \gamma^{13} + 926928 \gamma^{14} + 41184 \gamma^{15}) + 
     32 \beta^9 (98661 + 1866687 \gamma + 15885618 \gamma^2 + 80906116 \gamma^3 + 
        276675866 \gamma^4 + 674840994 \gamma^5 + 1215258340 \gamma^6 + 
        1646801188 \gamma^7 + 1692453549 \gamma^8 + 1316646147 \gamma^9 + 
        766098846 \gamma^{10} + 325359792 \gamma^{11} + 96708696 \gamma^{12} + 
        18750876 \gamma^{13} + 2095344 \gamma^{14} + 102960 \gamma^{15}) + 
     2 \beta^2 (945 + 25140 \gamma + 298662 \gamma^2 + 2109468 \gamma^3 + 9958968 \gamma^4 + 
        33486216 \gamma^5 + 83285952 \gamma^6 + 156665280 \gamma^7 + 225273008 \gamma^8 + 
        247789632 \gamma^9 + 206497248 \gamma^{10} + 127489728 \gamma^{11} + 
        55960704 \gamma^{12} + 16219008 \gamma^{13} + 2688768 \gamma^{14} + 179712 \gamma^{15}) + 
     32 \beta^7 (44965 + 947906 \gamma + 8953208 \gamma^2 + 50433780 \gamma^3 + 
        190237205 \gamma^4 + 510946738 \gamma^5 + 1012667298 \gamma^6 + 
        1511386522 \gamma^7 + 1714215562 \gamma^8 + 1476815916 \gamma^9 + 
        956433924 \gamma^{10} + 455317224 \gamma^{11} + 153148944 \gamma^{12} + 
        34002144 \gamma^{13} + 4396896 \gamma^{14} + 247104 \gamma^{15}) + 
     4 \beta^3 (3375 + 86033 \gamma + 979230 \gamma^2 + 6625272 \gamma^3 + 
        29950718 \gamma^4 + 96368312 \gamma^5 + 229144480 \gamma^6 + 411607600 \gamma^7 + 
        564473648 \gamma^8 + 591405360 \gamma^9 + 468905088 \gamma^{10} + 
        275222400 \gamma^{11} + 114864288 \gamma^{12} + 31720320 \gamma^{13} + 
        5044608 \gamma^{14} + 329472 \gamma^{15}) + 
     16 \beta^8 (150265 + 3004570 \gamma + 26965276 \gamma^2 + 144567028 \gamma^3 + 
        519661332 \gamma^4 + 1331111864 \gamma^5 + 2516570692 \gamma^6 + 
        3581417380 \gamma^7 + 3869506165 \gamma^8 + 3170504622 \gamma^9 + 
        1948355292 \gamma^{10} + 877378944 \gamma^{11} + 278036964 \gamma^{12} + 
        57887208 \gamma^{13} + 6997104 \gamma^{14} + 370656 \gamma^{15}) + 
     8 \beta^5 (30513 + 710253 \gamma + 7388664 \gamma^2 + 45729222 \gamma^3 + 
        189176247 \gamma^4 + 556770619 \gamma^5 + 1209448620 \gamma^6 + 
        1980822668 \gamma^7 + 2470651048 \gamma^8 + 2347662648 \gamma^9 + 
        1683260592 \gamma^{10} + 891089664 \gamma^{11} + 334901232 \gamma^{12} + 
        83403504 \gamma^{13} + 12075264 \gamma^{14} + 741312 \gamma^{15}) + 
     4 \beta^4 (16756 + 408558 \gamma + 4449084 \gamma^2 + 28805710 \gamma^3 + 
        124612737 \gamma^4 + 383532430 \gamma^5 + 871687260 \gamma^6 + 
        1495054424 \gamma^7 + 1955180872 \gamma^8 + 1950734256 \gamma^9 + 
        1470877824 \gamma^{10} + 820102368 \gamma^{11} + 324992016 \gamma^{12} + 
        85324896 \gamma^{13} + 12972096 \gamma^{14} + 823680 \gamma^{15}) + 
     8 \beta^6 (84231 + 1867880 \gamma + 18531878 \gamma^2 + 109501792 \gamma^3 + 
        432809387 \gamma^4 + 1217400396 \gamma^5 + 2526813284 \gamma^6 + 
        3951435944 \gamma^7 + 4700685728 \gamma^8 + 4253940816 \gamma^9 + 
        2899691520 \gamma^{10} + 1456519776 \gamma^{11} + 518405040 \gamma^{12} + 
        122122944 \gamma^{13} + 16759872 \gamma^{14} + 988416 \gamma^{15}) + 
     4608 \alpha^{15} (1 + \beta + \gamma)^9 (10 + \beta^4 + 39 \gamma + 48 \gamma^2 + 25 \gamma^3 + 
        3 \gamma^4 + \beta^3 (19 + 18 \gamma) + \beta^2 (42 + 81 \gamma + 30 \gamma^2) + 
        \beta (37 + 108 \gamma + 87 \gamma^2 + 22 \gamma^3)) + 
     1152 \alpha^{14} (1 + \beta + \gamma)^6 (197 + 1542 \gamma + 4933 \gamma^2 + 8484 \gamma^3 + 
        8617 \gamma^4 + 5190 \gamma^5 + 1721 \gamma^6 + 252 \gamma^7 + 12 \gamma^8 + 
        60 \beta^7 (1 + \gamma) + \beta^6 (809 + 1548 \gamma + 708 \gamma^2) + 
        6 \beta^5 (517 + 1493 \gamma + 1326 \gamma^2 + 358 \gamma^3) + 
        3 \beta^4 (1983 + 7686 \gamma + 10285 \gamma^2 + 5628 \gamma^3 + 1084 \gamma^4) + 
        4 \beta^3 (1629 + 7930 \gamma + 14256 \gamma^2 + 11881 \gamma^3 + 4641 \gamma^4 + 
           675 \gamma^5) + 
        \beta^2 (4141 + 24264 \gamma + 54882 \gamma^2 + 61752 \gamma^3 + 36759 \gamma^4 + 
           10836 \gamma^5 + 1212 \gamma^6) + 
        2 \beta (703 + 4813 \gamma + 13128 \gamma^2 + 18636 \gamma^3 + 14961 \gamma^4 + 
           6675 \gamma^5 + 1470 \gamma^6 + 114 \gamma^7)) + 
     384 \alpha^{13} (1 + \beta + \gamma)^3 (1861 + 12 \beta^{12} + 21942 \gamma + 113487 \gamma^2 + 
        341094 \gamma^3 + 664392 \gamma^4 + 882156 \gamma^5 + 813518 \gamma^6 + 
        518262 \gamma^7 + 221247 \gamma^8 + 59358 \gamma^9 + 8895 \gamma^{10} + 612 \gamma^{11} + 
        12 \gamma^{12} + 144 \beta^{11} (1 + \gamma) + 
        3 \beta^{10} (673 + 1356 \gamma + 672 \gamma^2) + 
        6 \beta^9 (3197 + 9356 \gamma + 8844 \gamma^2 + 2704 \gamma^3) + 
        9 \beta^8 (10065 + 39024 \gamma + 54321 \gamma^2 + 32156 \gamma^3 + 6828 \gamma^4) + 
        6 \beta^7 (41603 + 201823 \gamma + 373269 \gamma^2 + 328818 \gamma^3 + 
           138228 \gamma^4 + 22272 \gamma^5) + 
        6 \beta^6 (73962 + 431749 \gamma + 999492 \gamma^2 + 1174635 \gamma^3 + 
           740784 \gamma^4 + 238344 \gamma^5 + 30588 \gamma^6) + 
        6 \beta^5 (89048 + 608214 \gamma + 1694211 \gamma^2 + 2496921 \gamma^3 + 
           2107647 \gamma^4 + 1020564 \gamma^5 + 262284 \gamma^6 + 27504 \gamma^7) + 
        6 \beta^4 (73701 + 576761 \gamma + 1879790 \gamma^2 + 3337291 \gamma^3 + 
           3538190 \gamma^4 + 2295699 \gamma^5 + 888168 \gamma^6 + 186312 \gamma^7 + 
           16074 \gamma^8) + 
        2 \beta^3 (124247 + 1096029 \gamma + 4093467 \gamma^2 + 8510758 \gamma^3 + 
           10880079 \gamma^4 + 8870427 \gamma^5 + 4595697 \gamma^6 + 1447758 \gamma^7 + 
           248580 \gamma^8 + 17400 \gamma^9) + 
        3 \beta^2 (30223 + 296646 \gamma + 1249428 \gamma^2 + 2979674 \gamma^3 + 
           4465026 \gamma^4 + 4390950 \gamma^5 + 2856444 \gamma^6 + 1201770 \gamma^7 + 
           307863 \gamma^8 + 42324 \gamma^9 + 2280 \gamma^{10}) + 
        6 \beta (3235 + 34956 \gamma + 163965 \gamma^2 + 441579 \gamma^3 + 760003 \gamma^4 + 
           876826 \gamma^5 + 688049 \gamma^6 + 362935 \gamma^7 + 123588 \gamma^8 + 
           25052 \gamma^9 + 2592 \gamma^{10} + 96 \gamma^{11})) + 
     192 \alpha^{12} (1 + \beta + \gamma)^2 (8250 + 24 \beta^{14} + 113750 \gamma + 696062 \gamma^2 + 
        2508168 \gamma^3 + 5950477 \gamma^4 + 9822592 \gamma^5 + 11586899 \gamma^6 + 
        9839926 \gamma^7 + 5959811 \gamma^8 + 2503386 \gamma^9 + 691353 \gamma^{10} + 
        114366 \gamma^{11} + 9684 \gamma^{12} + 312 \gamma^{13} + 24 \beta^{13} (26 + 25 \gamma) + 
        6 \beta^{12} (957 + 1852 \gamma + 864 \gamma^2) + 
        6 \beta^{11} (6543 + 19317 \gamma + 18480 \gamma^2 + 5744 \gamma^3) + 
        3 \beta^{10} (75223 + 294322 \gamma + 418650 \gamma^2 + 257168 \gamma^3 + 
           57472 \gamma^4) + 
        6 \beta^9 (149670 + 727904 \gamma + 1364327 \gamma^2 + 1233511 \gamma^3 + 
           537848 \gamma^4 + 90348 \gamma^5) + 
        3 \beta^8 (797605 + 4648792 \gamma + 10831085 \gamma^2 + 12918942 \gamma^3 + 
           8326512 \gamma^4 + 2751528 \gamma^5 + 364344 \gamma^6) + 
        6 \beta^7 (732661 + 4987897 \gamma + 13925711 \gamma^2 + 20673338 \gamma^3 + 
           17645016 \gamma^4 + 8669796 \gamma^5 + 2271744 \gamma^6 + 244536 \gamma^7) + 
        3 \beta^6 (1907229 + 14872408 \gamma + 48472654 \gamma^2 + 86267730 \gamma^3 + 
           91820960 \gamma^4 + 59925768 \gamma^5 + 23420508 \gamma^6 + 5001792 \gamma^7 + 
           444912 \gamma^8) + 
        2 \beta^5 (2671576 + 23495460 \gamma + 87645408 \gamma^2 + 182095913 \gamma^3 + 
           232560561 \gamma^4 + 189526326 \gamma^5 + 98502876 \gamma^6 + 
           31393764 \gamma^7 + 5536080 \gamma^8 + 407652 \gamma^9) + 
        \beta^4 (3567167 + 34939894 \gamma + 146886663 \gamma^2 + 349261306 \gamma^3 + 
           520959122 \gamma^4 + 509714070 \gamma^5 + 330864888 \gamma^6 + 
           140138568 \gamma^7 + 36801918 \gamma^8 + 5349192 \gamma^9 + 
           320952 \gamma^{10}) + 
        2 \beta^3 (832542 + 8988286 \gamma + 42061729 \gamma^2 + 112720590 \gamma^3 + 
           192501233 \gamma^4 + 220052087 \gamma^5 + 171462999 \gamma^6 + 
           90598194 \gamma^7 + 31528251 \gamma^8 + 6786381 \gamma^9 + 795552 \gamma^{10} + 
           37080 \gamma^{11}) + 
        \beta^2 (517294 + 6102330 \gamma + 31470582 \gamma^2 + 93918498 \gamma^3 + 
           180890595 \gamma^4 + 236871504 \gamma^5 + 215639386 \gamma^6 + 
           136637922 \gamma^7 + 59158419 \gamma^8 + 16778898 \gamma^9 + 
           2882574 \gamma^{10} + 257904 \gamma^{11} + 8376 \gamma^{12}) + 
        2 \beta (48254 + 617382 \gamma + 3480117 \gamma^2 + 11456064 \gamma^3 + 
           24599051 \gamma^4 + 36365274 \gamma^5 + 37946148 \gamma^6 + 
           28087545 \gamma^7 + 14560881 \gamma^8 + 5116890 \gamma^9 + 1147983 \gamma^{10} + 
           148095 \gamma^{11} + 9024 \gamma^{12} + 156 \gamma^{13})) + 
     192 \alpha^{11} (13533 + 227527 \gamma + 1729020 \gamma^2 + 7895570 \gamma^3 + 
        24283812 \gamma^4 + 53371034 \gamma^5 + 86645330 \gamma^6 + 105734746 \gamma^7 + 
        97570741 \gamma^8 + 67802847 \gamma^9 + 34957058 \gamma^{10} + 12997772 \gamma^{11} + 
        3323034 \gamma^{12} + 539904 \gamma^{13} + 48744 \gamma^{14} + 1872 \gamma^{15} + 
        288 \beta^{16} (1 + \gamma) + 72 \beta^{15} (83 + 162 \gamma + 76 \gamma^2) + 
        144 \beta^{14} (369 + 1075 \gamma + 1008 \gamma^2 + 304 \gamma^3) + 
        36 \beta^{13} (8574 + 33448 \gamma + 47437 \gamma^2 + 29008 \gamma^3 + 6462 \gamma^4) + 
        12 \beta^{12} (115718 + 564926 \gamma + 1070478 \gamma^2 + 985675 \gamma^3 + 
           441228 \gamma^4 + 76782 \gamma^5) + 
        6 \beta^{11} (813725 + 4760040 \gamma + 11241210 \gamma^2 + 13741020 \gamma^3 + 
           9174366 \gamma^4 + 3170808 \gamma^5 + 442680 \gamma^6) + 
        6 \beta^{10} (2177829 + 14844821 \gamma + 41930424 \gamma^2 + 63707682 \gamma^3 + 
           56273970 \gamma^4 + 28901574 \gamma^5 + 7986696 \gamma^6 + 914952 \gamma^7) + 
        \beta^9 (26373871 + 205456130 \gamma + 675804032 \gamma^2 + 
           1227179856 \gamma^3 + 1346713032 \gamma^4 + 915029208 \gamma^5 + 
           375865032 \gamma^6 + 85255200 \gamma^7 + 8158608 \gamma^8) + 
        3 \beta^8 (13427795 + 117807109 \gamma + 442670586 \gamma^2 + 
           935756704 \gamma^3 + 1227490494 \gamma^4 + 1036889712 \gamma^5 + 
           564058200 \gamma^6 + 190371336 \gamma^7 + 36093600 \gamma^8 + 
           2918064 \gamma^9) + 
        6 \beta^7 (7801383 + 76183516 \gamma + 322221824 \gamma^2 + 777806072 \gamma^3 + 
           1187803396 \gamma^4 + 1199952552 \gamma^5 + 812143884 \gamma^6 + 
           363251736 \gamma^7 + 102511560 \gamma^8 + 16414440 \gamma^9 + 
           1125264 \gamma^{10}) + 
        2 \beta^6 (20712775 + 222961725 \gamma + 1048949606 \gamma^2 + 
           2848718020 \gamma^3 + 4967195728 \gamma^4 + 5843028064 \gamma^5 + 
           4731024660 \gamma^6 + 2633298420 \gamma^7 + 984662748 \gamma^8 + 
           234477204 \gamma^9 + 31773744 \gamma^{10} + 1836288 \gamma^{11}) + 
        2 \beta^5 (13878853 + 163335480 \gamma + 846436347 \gamma^2 + 
           2555731368 \gamma^3 + 5012833029 \gamma^4 + 6733073982 \gamma^5 + 
           6348354750 \gamma^6 + 4226605272 \gamma^7 + 1965585132 \gamma^8 + 
           619358832 \gamma^9 + 124502562 \gamma^{10} + 14161824 \gamma^{11} + 
           676476 \gamma^{12}) + 
        2 \beta^4 (6931801 + 88562133 \gamma + 501425205 \gamma^2 + 
           1667076219 \gamma^3 + 3634966569 \gamma^4 + 5492132661 \gamma^5 + 
           5912513424 \gamma^6 + 4582465242 \gamma^7 + 2547008730 \gamma^8 + 
           996508260 \gamma^9 + 264297492 \gamma^{10} + 44381898 \gamma^{11} + 
           4139928 \gamma^{12} + 156276 \gamma^{13}) + 
        2 \beta^3 (2504924 + 34531804 \gamma + 212141602 \gamma^2 + 770439968 \gamma^3 + 
           1849820919 \gamma^4 + 3107393420 \gamma^5 + 3763080608 \gamma^6 + 
           3329142856 \gamma^7 + 2152325077 \gamma^8 + 1004762976 \gamma^9 + 
           329890686 \gamma^{10} + 72722772 \gamma^{11} + 9933378 \gamma^{12} + 
           722808 \gamma^{13} + 19512 \gamma^{14}) + 
        2 \beta^2 (620542 + 9181200 \gamma + 60841566 \gamma^2 + 239767342 \gamma^3 + 
           629015253 \gamma^4 + 1163883711 \gamma^5 + 1567365714 \gamma^6 + 
           1559700420 \gamma^7 + 1150345833 \gamma^8 + 623767833 \gamma^9 + 
           243699084 \gamma^{10} + 66183462 \gamma^{11} + 11772198 \gamma^{12} + 
           1238562 \gamma^{13} + 63288 \gamma^{14} + 936 \gamma^{15}) + 
        \beta (189455 + 2994294 \gamma + 21295602 \gamma^2 + 90557264 \gamma^3 + 
           257928066 \gamma^4 + 521755656 \gamma^5 + 774295554 \gamma^6 + 
           857016816 \gamma^7 + 710858991 \gamma^8 + 439357458 \gamma^9 + 
           198973668 \gamma^{10} + 64031904 \gamma^{11} + 13920840 \gamma^{12} + 
           1881288 \gamma^{13} + 136224 \gamma^{14} + 3744 \gamma^{15})) + 
     32 \alpha^{10} (101571 + 1813896 \gamma + 14604432 \gamma^2 + 70513188 \gamma^3 + 
        228947238 \gamma^4 + 530714796 \gamma^5 + 908577086 \gamma^6 + 
        1170005160 \gamma^7 + 1141232271 \gamma^8 + 840744108 \gamma^9 + 
        461642418 \gamma^{10} + 184071084 \gamma^{11} + 50987928 \gamma^{12} + 
        9116424 \gamma^{13} + 926928 \gamma^{14} + 41184 \gamma^{15} + 
        9504 \beta^{16} (1 + \gamma)^2 + 
        144 \beta^{15} (1183 + 3483 \gamma + 3318 \gamma^2 + 1024 \gamma^3) + 
        216 \beta^{14} (6316 + 24618 \gamma + 34789 \gamma^2 + 21128 \gamma^3 + 
           4660 \gamma^4) + 
        72 \beta^{13} (96823 + 471661 \gamma + 888960 \gamma^2 + 810587 \gamma^3 + 
           358098 \gamma^4 + 61446 \gamma^5) + 
        36 \beta^{12} (735754 + 4301928 \gamma + 10141263 \gamma^2 + 12349542 \gamma^3 + 
           8201203 \gamma^4 + 2818212 \gamma^5 + 391604 \gamma^6) + 
        36 \beta^{11} (2177829 + 14844821 \gamma + 41930424 \gamma^2 + 63707682 \gamma^3 + 
           56273970 \gamma^4 + 28901574 \gamma^5 + 7986696 \gamma^6 + 914952 \gamma^7) + 
        12 \beta^{10} (15092248 + 117478328 \gamma + 386258192 \gamma^2 + 
           701643432 \gamma^3 + 770739321 \gamma^4 + 524286162 \gamma^5 + 
           215528052 \gamma^6 + 48883464 \gamma^7 + 4672512 \gamma^8) + 
        6 \beta^9 (54113941 + 473877053 \gamma + 1777613374 \gamma^2 + 
           3753671772 \gamma^3 + 4920957942 \gamma^4 + 4154238888 \gamma^5 + 
           2257053528 \gamma^6 + 759927912 \gamma^7 + 143506800 \gamma^8 + 
           11535408 \gamma^9) + 
        9 \beta^8 (50073183 + 487683172 \gamma + 2056551614 \gamma^2 + 
           4949644268 \gamma^3 + 7535259120 \gamma^4 + 7584600552 \gamma^5 + 
           5109646056 \gamma^6 + 2271692592 \gamma^7 + 636119736 \gamma^8 + 
           100862688 \gamma^9 + 6831552 \gamma^{10}) + 
        12 \beta^7 (40310411 + 432617955 \gamma + 2027291830 \gamma^2 + 
           5479651292 \gamma^3 + 9500210432 \gamma^4 + 11097897116 \gamma^5 + 
           8910332832 \gamma^6 + 4909496100 \gamma^7 + 1813823784 \gamma^8 + 
           425847144 \gamma^9 + 56757960 \gamma^{10} + 3217536 \gamma^{11}) + 
        2 \beta^6 (200060251 + 2347598010 \gamma + 12110199180 \gamma^2 + 
           36337115952 \gamma^3 + 70695226782 \gamma^4 + 93998991036 \gamma^5 + 
           87553159008 \gamma^6 + 57463109352 \gamma^7 + 26287614180 \gamma^8 + 
           8130225024 \gamma^9 + 1600273764 \gamma^{10} + 177741216 \gamma^{11} + 
           8262576 \gamma^{12}) + 
        36 \beta^5 (7022006 + 89492871 \gamma + 504203554 \gamma^2 + 
           1663729339 \gamma^3 + 3590601953 \gamma^4 + 5354821229 \gamma^5 + 
           5674613530 \gamma^6 + 4318168326 \gamma^7 + 2350739340 \gamma^8 + 
           898687554 \gamma^9 + 232349904 \gamma^{10} + 37934550 \gamma^{11} + 
           3429204 \gamma^{12} + 124908 \gamma^{13}) + 
        6 \beta^4 (19982221 + 274972842 \gamma + 1680693627 \gamma^2 + 
           6051628508 \gamma^3 + 14354192721 \gamma^4 + 23737332056 \gamma^5 + 
           28203704840 \gamma^6 + 24404002612 \gamma^7 + 15387164296 \gamma^8 + 
           6987360408 \gamma^9 + 2226322194 \gamma^{10} + 475151076 \gamma^{11} + 
           62660778 \gamma^{12} + 4384440 \gamma^{13} + 113112 \gamma^{14}) + 
        4 \beta^3 (10351008 + 153003966 \gamma + 1008770556 \gamma^2 + 
           3937971914 \gamma^3 + 10188924021 \gamma^4 + 18514557807 \gamma^5 + 
           24387085926 \gamma^6 + 23647686948 \gamma^7 + 16937706327 \gamma^8 + 
           8892593871 \gamma^9 + 3355419528 \gamma^{10} + 878283522 \gamma^{11} + 
           150278058 \gamma^{12} + 15165558 \gamma^{13} + 738504 \gamma^{14} + 
           10296 \gamma^{15}) + 
        6 \beta (243015 + 4090221 \gamma + 30934242 \gamma^2 + 139747562 \gamma^3 + 
           422702658 \gamma^4 + 908461190 \gamma^5 + 1434347920 \gamma^6 + 
           1693404494 \gamma^7 + 1504292741 \gamma^8 + 1001688285 \gamma^9 + 
           492947758 \gamma^{10} + 174516976 \gamma^{11} + 42492654 \gamma^{12} + 
           6606096 \gamma^{13} + 574344 \gamma^{14} + 20592 \gamma^{15}) + 
        6 \beta^2 (1644079 + 25985194 \gamma + 183889668 \gamma^2 + 774046148 \gamma^3 + 
           2171082743 \gamma^4 + 4303446630 \gamma^5 + 6228336672 \gamma^6 + 
           6693270000 \gamma^7 + 5368240158 \gamma^8 + 3196556436 \gamma^9 + 
           1390549224 \gamma^{10} + 428995368 \gamma^{11} + 89342802 \gamma^{12} + 
           11564244 \gamma^{13} + 799128 \gamma^{14} + 20592 \gamma^{15})) + 
     32 \alpha^9 (98661 + 1866687 \gamma + 15885618 \gamma^2 + 80906116 \gamma^3 + 
        276675866 \gamma^4 + 674840994 \gamma^5 + 1215258340 \gamma^6 + 
        1646801188 \gamma^7 + 1692453549 \gamma^8 + 1316646147 \gamma^9 + 
        766098846 \gamma^{10} + 325359792 \gamma^{11} + 96708696 \gamma^{12} + 
        18750876 \gamma^{13} + 2095344 \gamma^{14} + 102960 \gamma^{15} + 
        31680 \beta^{16} (1 + \gamma)^3 + 
        144 \beta^{15} (3577 + 14088 \gamma + 20307 \gamma^2 + 12718 \gamma^3 + 
           2919 \gamma^4) + 
        144 \beta^{14} (26455 + 129298 \gamma + 244923 \gamma^2 + 224942 \gamma^3 + 
           100275 \gamma^4 + 17373 \gamma^5) + 
        36 \beta^{13} (494521 + 2894334 \gamma + 6824448 \gamma^2 + 8301192 \gamma^3 + 
           5500993 \gamma^4 + 1885908 \gamma^5 + 261668 \gamma^6) + 
        36 \beta^{12} (1679936 + 11461538 \gamma + 32375393 \gamma^2 + 49113434 \gamma^3 + 
           43259667 \gamma^4 + 22147391 \gamma^5 + 6106160 \gamma^6 + 699228 \gamma^7) + 
        6 \beta^{11} (26373871 + 205456130 \gamma + 675804032 \gamma^2 + 
           1227179856 \gamma^3 + 1346713032 \gamma^4 + 915029208 \gamma^5 + 
           375865032 \gamma^6 + 85255200 \gamma^7 + 8158608 \gamma^8) + 
        6 \beta^{10} (54113941 + 473877053 \gamma + 1777613374 \gamma^2 + 
           3753671772 \gamma^3 + 4920957942 \gamma^4 + 4154238888 \gamma^5 + 
           2257053528 \gamma^6 + 759927912 \gamma^7 + 143506800 \gamma^8 + 
           11535408 \gamma^9) + 
        3 \beta^9 (174607269 + 1698703196 \gamma + 7156258100 \gamma^2 + 
           17212146352 \gamma^3 + 26193016172 \gamma^4 + 26354408496 \gamma^5 + 
           17743004784 \gamma^6 + 7879002048 \gamma^7 + 2202068472 \gamma^8 + 
           348202080 \gamma^9 + 23498784 \gamma^{10}) + 
        3 \beta^8 (221197459 + 2369005197 \gamma + 11078121098 \gamma^2 + 
           29888655160 \gamma^3 + 51730358632 \gamma^4 + 60315385804 \gamma^5 + 
           48307759368 \gamma^6 + 26528528208 \gamma^7 + 9757218456 \gamma^8 + 
           2277491496 \gamma^9 + 301345920 \gamma^{10} + 16932384 \gamma^{11}) + 
        4 \beta^7 (164524979 + 1925435724 \gamma + 9902508978 \gamma^2 + 
           29619619992 \gamma^3 + 57428407107 \gamma^4 + 76051747446 \gamma^5 + 
           70487991102 \gamma^6 + 45981130104 \gamma^7 + 20877441012 \gamma^8 + 
           6398299368 \gamma^9 + 1245700656 \gamma^{10} + 136586952 \gamma^{11} + 
           6254460 \gamma^{12}) + 
        4 \beta^6 (126990284 + 1613821209 \gamma + 9058957899 \gamma^2 + 
           29760798636 \gamma^3 + 63889144377 \gamma^4 + 94669904847 \gamma^5 + 
           99548350620 \gamma^6 + 75056010414 \gamma^7 + 40418483490 \gamma^8 + 
           15258697452 \gamma^9 + 3888266652 \gamma^{10} + 624347964 \gamma^{11} + 
           55373292 \gamma^{12} + 1973268 \gamma^{13}) + 
        6 \beta^5 (50317235 + 690580122 \gamma + 4203806094 \gamma^2 + 
           15052823248 \gamma^3 + 35449700907 \gamma^4 + 58102621684 \gamma^5 + 
           68298924532 \gamma^6 + 58360769120 \gamma^7 + 36273044390 \gamma^8 + 
           16207508316 \gamma^9 + 5071711008 \gamma^{10} + 1060891200 \gamma^{11} + 
           136791678 \gamma^{12} + 9330264 \gamma^{13} + 233784 \gamma^{14}) + 
        2 \beta^4 (67819149 + 1000352193 \gamma + 6567233148 \gamma^2 + 
           25468663750 \gamma^3 + 65307839103 \gamma^4 + 117329699037 \gamma^5 + 
           152438589738 \gamma^6 + 145478832156 \gamma^7 + 102339946632 \gamma^8 + 
           52671163188 \gamma^9 + 19448043642 \gamma^{10} + 4972632300 \gamma^{11} + 
           829469862 \gamma^{12} + 81379638 \gamma^{13} + 3836160 \gamma^{14} + 
           51480 \gamma^{15}) + 
        6 \beta^2 (1701427 + 28625021 \gamma + 215574820 \gamma^2 + 965871822 \gamma^3 + 
           2885988454 \gamma^4 + 6103704054 \gamma^5 + 9449758694 \gamma^6 + 
           10903990516 \gamma^7 + 9439482997 \gamma^8 + 6110444049 \gamma^9 + 
           2917972298 \gamma^{10} + 1001551064 \gamma^{11} + 236491542 \gamma^{12} + 
           35688288 \gamma^{13} + 3007080 \gamma^{14} + 102960 \gamma^{15}) + 
        2 \beta^3 (22328959 + 352438856 \gamma + 2483323794 \gamma^2 + 
           10375060656 \gamma^3 + 28791287962 \gamma^4 + 56287911900 \gamma^5 + 
           80115084192 \gamma^6 + 84441054384 \gamma^7 + 66262973733 \gamma^8 + 
           38524767414 \gamma^9 + 16335200220 \gamma^{10} + 4905694944 \gamma^{11} + 
           993416184 \gamma^{12} + 124822296 \gamma^{13} + 8339040 \gamma^{14} + 
           205920 \gamma^{15}) + 
        \beta (1457295 + 26041596 \gamma + 208835076 \gamma^2 + 999439596 \gamma^3 + 
           3201285246 \gamma^4 + 7287607092 \gamma^5 + 12200387516 \gamma^6 + 
           15302837928 \gamma^7 + 14486663067 \gamma^8 + 10325801124 \gamma^9 + 
           5473297116 \gamma^{10} + 2105117664 \gamma^{11} + 563502180 \gamma^{12} + 
           97900848 \gamma^{13} + 9729360 \gamma^{14} + 411840 \gamma^{15})) + 
     16 \alpha^8 (150265 + 3004570 \gamma + 26965276 \gamma^2 + 144567028 \gamma^3 + 
        519661332 \gamma^4 + 1331111864 \gamma^5 + 2516570692 \gamma^6 + 
        3581417380 \gamma^7 + 3869506165 \gamma^8 + 3170504622 \gamma^9 + 
        1948355292 \gamma^{10} + 877378944 \gamma^{11} + 278036964 \gamma^{12} + 
        57887208 \gamma^{13} + 6997104 \gamma^{14} + 370656 \gamma^{15} + 
        142560 \beta^{16} (1 + \gamma)^4 + 
        2592 \beta^{15} (830 + 4095 \gamma + 7915 \gamma^2 + 7505 \gamma^3 + 3492 \gamma^4 + 
           637 \gamma^5) + 
        72 \beta^{14} (207007 + 1217220 \gamma + 2895576 \gamma^2 + 3571200 \gamma^3 + 
           2410947 \gamma^4 + 845100 \gamma^5 + 120132 \gamma^6) + 
        72 \beta^{13} (901681 + 6168525 \gamma + 17491740 \gamma^2 + 26668446 \gamma^3 + 
           23638455 \gamma^4 + 12194991 \gamma^5 + 3392712 \gamma^6 + 392652 \gamma^7) + 
        36 \beta^{12} (5635478 + 43998642 \gamma + 145053360 \gamma^2 + 
           263893830 \gamma^3 + 290089857 \gamma^4 + 197508510 \gamma^5 + 
           81380580 \gamma^6 + 18547128 \gamma^7 + 1787400 \gamma^8) + 
        36 \beta^{11} (13427795 + 117807109 \gamma + 442670586 \gamma^2 + 
           935756704 \gamma^3 + 1227490494 \gamma^4 + 1036889712 \gamma^5 + 
           564058200 \gamma^6 + 190371336 \gamma^7 + 36093600 \gamma^8 + 
           2918064 \gamma^9) + 
        18 \beta^{10} (50073183 + 487683172 \gamma + 2056551614 \gamma^2 + 
           4949644268 \gamma^3 + 7535259120 \gamma^4 + 7584600552 \gamma^5 + 
           5109646056 \gamma^6 + 2271692592 \gamma^7 + 636119736 \gamma^8 + 
           100862688 \gamma^9 + 6831552 \gamma^{10}) + 
        6 \beta^9 (221197459 + 2369005197 \gamma + 11078121098 \gamma^2 + 
           29888655160 \gamma^3 + 51730358632 \gamma^4 + 60315385804 \gamma^5 + 
           48307759368 \gamma^6 + 26528528208 \gamma^7 + 9757218456 \gamma^8 + 
           2277491496 \gamma^9 + 301345920 \gamma^{10} + 16932384 \gamma^{11}) + 
        3 \beta^8 (515286439 + 6023638698 \gamma + 30945901236 \gamma^2 + 
           92478350736 \gamma^3 + 179156226348 \gamma^4 + 237044796120 \gamma^5 + 
           219456132624 \gamma^6 + 142938970560 \gamma^7 + 64767673080 \gamma^8 + 
           19796405328 \gamma^9 + 3841323840 \gamma^{10} + 419477184 \gamma^{11} + 
           19115712 \gamma^{12}) + 
        24 \beta^7 (59196732 + 750783614 \gamma + 4205782614 \gamma^2 + 
           13790115876 \gamma^3 + 29545581783 \gamma^4 + 43682271435 \gamma^5 + 
           45807672930 \gamma^6 + 34418728254 \gamma^7 + 18455017626 \gamma^8 + 
           6930030060 \gamma^9 + 1754545572 \gamma^{10} + 279570456 \gamma^{11} + 
           24572160 \gamma^{12} + 866556 \gamma^{13}) + 
        4 \beta^6 (255947350 + 3504003826 \gamma + 21270659626 \gamma^2 + 
           75937680414 \gamma^3 + 178236009003 \gamma^4 + 290981558700 \gamma^5 + 
           340427239320 \gamma^6 + 289237167468 \gamma^7 + 178550842014 \gamma^8 + 
           79141751424 \gamma^9 + 24533825652 \gamma^{10} + 5076324216 \gamma^{11} + 
           646364070 \gamma^{12} + 43455096 \gamma^{13} + 1071144 \gamma^{14}) + 
        12 \beta^5 (47659660 + 701176360 \gamma + 4588005685 \gamma^2 + 
           17721801346 \gamma^3 + 45222255813 \gamma^4 + 80767168443 \gamma^5 + 
           104199735414 \gamma^6 + 98625849300 \gamma^7 + 68724178938 \gamma^8 + 
           34989967698 \gamma^9 + 12762985068 \gamma^{10} + 3218911596 \gamma^{11} + 
           528701562 \gamma^{12} + 50968386 \gamma^{13} + 2354832 \gamma^{14} + 
           30888 \gamma^{15}) + 
        6 \beta (381843 + 7224301 \gamma + 61268624 \gamma^2 + 309843880 \gamma^3 + 
           1048326156 \gamma^4 + 2521280140 \gamma^5 + 4463129238 \gamma^6 + 
           5928682604 \gamma^7 + 5958496239 \gamma^8 + 4524519393 \gamma^9 + 
           2566897266 \gamma^{10} + 1063266960 \gamma^{11} + 309024024 \gamma^{12} + 
           58895988 \gamma^{13} + 6498576 \gamma^{14} + 308880 \gamma^{15}) + 
        6 \beta^4 (40503030 + 637811820 \gamma + 4477913206 \gamma^2 + 
           18615418430 \gamma^3 + 51326159139 \gamma^4 + 99544920786 \gamma^5 + 
           140339133144 \gamma^6 + 146297320176 \gamma^7 + 113387065128 \gamma^8 + 
           65023425228 \gamma^9 + 27160977024 \gamma^{10} + 8025325164 \gamma^{11} + 
           1596648078 \gamma^{12} + 196718724 \gamma^{13} + 12848760 \gamma^{14} + 
           308880 \gamma^{15}) + 
        6 \beta^2 (2776867 + 49573058 \gamma + 396079080 \gamma^2 + 
           1883108180 \gamma^3 + 5974814036 \gamma^4 + 13436181548 \gamma^5 + 
           22164972632 \gamma^6 + 27333749652 \gamma^7 + 25392155721 \gamma^8 + 
           17734012500 \gamma^9 + 9201881118 \gamma^{10} + 3463987236 \gamma^{11} + 
           908231880 \gamma^{12} + 154752984 \gamma^{13} + 15070320 \gamma^{14} + 
           617760 \gamma^{15}) + 
        4 \beta^3 (19033200 + 319649947 \gamma + 2398301942 \gamma^2 + 
           10682700308 \gamma^3 + 31664022014 \gamma^4 + 66291010880 \gamma^5 + 
           101393402664 \gamma^6 + 115376640372 \gamma^7 + 98340272271 \gamma^8 + 
           62592326025 \gamma^9 + 29358364290 \gamma^{10} + 9889812348 \gamma^{11} + 
           2290463622 \gamma^{12} + 338684976 \gamma^{13} + 27886680 \gamma^{14} + 
           926640 \gamma^{15})) + 
     2 \alpha (83 + 2301 \gamma + 28494 \gamma^2 + 209856 \gamma^3 + 1033632 \gamma^4 + 
        3628668 \gamma^5 + 9431904 \gamma^6 + 18562128 \gamma^7 + 27957616 \gamma^8 + 
        32247792 \gamma^9 + 28207200 \gamma^{10} + 18287232 \gamma^{11} + 8424576 \gamma^{12} + 
        2555712 \gamma^{13} + 440064 \gamma^{14} + 29952 \gamma^{15} + 
        27648 \beta^{16} (1 + \gamma)^{11} + 
        2304 \beta^{15} (1 + \gamma)^8 (127 + 496 \gamma + 627 \gamma^2 + 334 \gamma^3 + 
           49 \gamma^4) + 
        2304 \beta^{14} (1 + \gamma)^5 (647 + 5071 \gamma + 16370 \gamma^2 + 28608 \gamma^3 + 
           29724 \gamma^4 + 18603 \gamma^5 + 6654 \gamma^6 + 1170 \gamma^7 + 75 \gamma^8) + 
        576 \beta^{13} (1 + \gamma)^2 (8331 + 98324 \gamma + 511329 \gamma^2 + 
           1552182 \gamma^3 + 3067556 \gamma^4 + 4155814 \gamma^5 + 3943268 \gamma^6 + 
           2620230 \gamma^7 + 1194293 \gamma^8 + 356638 \gamma^9 + 64183 \gamma^{10} + 
           5988 \gamma^{11} + 204 \gamma^{12}) + 
        192 \beta^{12} (56504 + 835890 \gamma + 5572947 \gamma^2 + 22237910 \gamma^3 + 
           59449941 \gamma^4 + 112792509 \gamma^5 + 156685238 \gamma^6 + 
           161771940 \gamma^7 + 124481856 \gamma^8 + 70789394 \gamma^9 + 
           29137383 \gamma^{10} + 8366670 \gamma^{11} + 1577247 \gamma^{12} + 
           176295 \gamma^{13} + 9648 \gamma^{14} + 156 \gamma^{15}) + 
        96 \beta^{11} (189455 + 2994294 \gamma + 21295602 \gamma^2 + 90557264 \gamma^3 + 
           257928066 \gamma^4 + 521755656 \gamma^5 + 774295554 \gamma^6 + 
           857016816 \gamma^7 + 710858991 \gamma^8 + 439357458 \gamma^9 + 
           198973668 \gamma^{10} + 64031904 \gamma^{11} + 13920840 \gamma^{12} + 
           1881288 \gamma^{13} + 136224 \gamma^{14} + 3744 \gamma^{15}) + 
        96 \beta^{10} (243015 + 4090221 \gamma + 30934242 \gamma^2 + 139747562 \gamma^3 + 
           422702658 \gamma^4 + 908461190 \gamma^5 + 1434347920 \gamma^6 + 
           1693404494 \gamma^7 + 1504292741 \gamma^8 + 1001688285 \gamma^9 + 
           492947758 \gamma^{10} + 174516976 \gamma^{11} + 42492654 \gamma^{12} + 
           6606096 \gamma^{13} + 574344 \gamma^{14} + 20592 \gamma^{15}) + 
        48 \beta^8 (381843 + 7224301 \gamma + 61268624 \gamma^2 + 309843880 \gamma^3 + 
           1048326156 \gamma^4 + 2521280140 \gamma^5 + 4463129238 \gamma^6 + 
           5928682604 \gamma^7 + 5958496239 \gamma^8 + 4524519393 \gamma^9 + 
           2566897266 \gamma^{10} + 1063266960 \gamma^{11} + 309024024 \gamma^{12} + 
           58895988 \gamma^{13} + 6498576 \gamma^{14} + 308880 \gamma^{15}) + 
        6 \beta^2 (3185 + 81009 \gamma + 921162 \gamma^2 + 6234348 \gamma^3 + 
           28226558 \gamma^4 + 91061128 \gamma^5 + 217316288 \gamma^6 + 
           392138896 \gamma^7 + 540659952 \gamma^8 + 569912368 \gamma^9 + 
           454923904 \gamma^{10} + 268979136 \gamma^{11} + 113128992 \gamma^{12} + 
           31481472 \gamma^{13} + 5037696 \gamma^{14} + 329472 \gamma^{15}) + 
        \beta (1809 + 48060 \gamma + 570708 \gamma^2 + 4033104 \gamma^3 + 19068228 \gamma^4 + 
           64260720 \gamma^5 + 160302576 \gamma^6 + 302616960 \gamma^7 + 
           436926448 \gamma^8 + 482788800 \gamma^9 + 404323392 \gamma^{10} + 
           250933248 \gamma^{11} + 110738880 \gamma^{12} + 32262912 \gamma^{13} + 
           5370624 \gamma^{14} + 359424 \gamma^{15}) + 
        16 \beta^9 (1457295 + 26041596 \gamma + 208835076 \gamma^2 + 
           999439596 \gamma^3 + 3201285246 \gamma^4 + 7287607092 \gamma^5 + 
           12200387516 \gamma^6 + 15302837928 \gamma^7 + 14486663067 \gamma^8 + 
           10325801124 \gamma^9 + 5473297116 \gamma^{10} + 2105117664 \gamma^{11} + 
           563502180 \gamma^{12} + 97900848 \gamma^{13} + 9729360 \gamma^{14} + 
           411840 \gamma^{15}) + 
        32 \beta^7 (354935 + 7091552 \gamma + 63454583 \gamma^2 + 338329577 \gamma^3 + 
           1206486897 \gamma^4 + 3058821745 \gamma^5 + 5712295973 \gamma^6 + 
           8016255824 \gamma^7 + 8528788544 \gamma^8 + 6875034108 \gamma^9 + 
           4155639972 \gamma^{10} + 1842405000 \gamma^{11} + 576332784 \gamma^{12} + 
           118976976 \gamma^{13} + 14302656 \gamma^{14} + 741312 \gamma^{15}) + 
        16 \beta^6 (345385 + 7270481 \gamma + 68491211 \gamma^2 + 384249297 \gamma^3 + 
           1441436474 \gamma^4 + 3845306644 \gamma^5 + 7561803840 \gamma^6 + 
           11188995262 \gamma^7 + 12575001478 \gamma^8 + 10733008212 \gamma^9 + 
           6889139244 \gamma^{10} + 3254378856 \gamma^{11} + 1088800752 \gamma^{12} + 
           241266240 \gamma^{13} + 31187808 \gamma^{14} + 1729728 \gamma^{15}) + 
        12 \beta^5 (174031 + 3851418 \gamma + 38124084 \gamma^2 + 224651400 \gamma^3 + 
           885078867 \gamma^4 + 2480613356 \gamma^5 + 5129258140 \gamma^6 + 
           7990589104 \gamma^7 + 9470862808 \gamma^8 + 8542536640 \gamma^9 + 
           5808053376 \gamma^{10} + 2913566592 \gamma^{11} + 1037630064 \gamma^{12} + 
           245146944 \gamma^{13} + 33753024 \gamma^{14} + 1976832 \gamma^{15}) + 
        2 \beta^3 (63997 + 1556454 \gamma + 16924368 \gamma^2 + 109526656 \gamma^3 + 
           474047592 \gamma^4 + 1461121912 \gamma^5 + 3328610784 \gamma^6 + 
           5727354368 \gamma^7 + 7520422096 \gamma^8 + 7539996000 \gamma^9 + 
           5717867904 \gamma^{10} + 3209132928 \gamma^{11} + 1281185280 \gamma^{12} + 
           339018624 \gamma^{13} + 51881472 \gamma^{14} + 3294720 \gamma^{15}) + 
        4 \beta^4 (150360 + 3491430 \gamma + 36251457 \gamma^2 + 224020938 \gamma^3 + 
           925651323 \gamma^4 + 2722187171 \gamma^5 + 5911492860 \gamma^6 + 
           9684086620 \gamma^7 + 12089051624 \gamma^8 + 11505163368 \gamma^9 + 
           8269245504 \gamma^{10} + 4393188672 \gamma^{11} + 1659250800 \gamma^{12} + 
           415777392 \gamma^{13} + 60531840 \gamma^{14} + 3706560 \gamma^{15})) + 
     32 \alpha^7 (44965 + 947906 \gamma + 8953208 \gamma^2 + 50433780 \gamma^3 + 
        190237205 \gamma^4 + 510946738 \gamma^5 + 1012667298 \gamma^6 + 
        1511386522 \gamma^7 + 1714215562 \gamma^8 + 1476815916 \gamma^9 + 
        956433924 \gamma^{10} + 455317224 \gamma^{11} + 153148944 \gamma^{12} + 
        34002144 \gamma^{13} + 4396896 \gamma^{14} + 247104 \gamma^{15} + 
        114048 \beta^{16} (1 + \gamma)^5 + 
        864 \beta^{15} (1 + \gamma)^2 (1873 + 7360 \gamma + 10347 \gamma^2 + 6226 \gamma^3 + 
           1348 \gamma^4) + 
        576 \beta^{14} (18401 + 126494 \gamma + 362490 \gamma^2 + 562161 \gamma^3 + 
           510108 \gamma^4 + 270909 \gamma^5 + 77916 \gamma^6 + 9348 \gamma^7) + 
        144 \beta^{13} (302167 + 2367099 \gamma + 7850734 \gamma^2 + 14411244 \gamma^3 + 
           16033422 \gamma^4 + 11080326 \gamma^5 + 4645605 \gamma^6 + 1079640 \gamma^7 + 
           106311 \gamma^8) + 
        24 \beta^{12} (5295623 + 46597014 \gamma + 175791546 \gamma^2 + 
           373427518 \gamma^3 + 492789420 \gamma^4 + 419333004 \gamma^5 + 
           230155380 \gamma^6 + 78517062 \gamma^7 + 15079320 \gamma^8 + 
           1237998 \gamma^9) + 
        36 \beta^{11} (7801383 + 76183516 \gamma + 322221824 \gamma^2 + 
           777806072 \gamma^3 + 1187803396 \gamma^4 + 1199952552 \gamma^5 + 
           812143884 \gamma^6 + 363251736 \gamma^7 + 102511560 \gamma^8 + 
           16414440 \gamma^9 + 1125264 \gamma^{10}) + 
        12 \beta^{10} (40310411 + 432617955 \gamma + 2027291830 \gamma^2 + 
           5479651292 \gamma^3 + 9500210432 \gamma^4 + 11097897116 \gamma^5 + 
           8910332832 \gamma^6 + 4909496100 \gamma^7 + 1813823784 \gamma^8 + 
           425847144 \gamma^9 + 56757960 \gamma^{10} + 3217536 \gamma^{11}) + 
        4 \beta^9 (164524979 + 1925435724 \gamma + 9902508978 \gamma^2 + 
           29619619992 \gamma^3 + 57428407107 \gamma^4 + 76051747446 \gamma^5 + 
           70487991102 \gamma^6 + 45981130104 \gamma^7 + 20877441012 \gamma^8 + 
           6398299368 \gamma^9 + 1245700656 \gamma^{10} + 136586952 \gamma^{11} + 
           6254460 \gamma^{12}) + 
        12 \beta^8 (59196732 + 750783614 \gamma + 4205782614 \gamma^2 + 
           13790115876 \gamma^3 + 29545581783 \gamma^4 + 43682271435 \gamma^5 + 
           45807672930 \gamma^6 + 34418728254 \gamma^7 + 18455017626 \gamma^8 + 
           6930030060 \gamma^9 + 1754545572 \gamma^{10} + 279570456 \gamma^{11} + 
           24572160 \gamma^{12} + 866556 \gamma^{13}) + 
        2 \beta^7 (303843389 + 4155395864 \gamma + 25199241314 \gamma^2 + 
           89879646204 \gamma^3 + 210767779518 \gamma^4 + 343744321032 \gamma^5 + 
           401664366600 \gamma^6 + 340744294368 \gamma^7 + 209944274796 \gamma^8 + 
           92836334016 \gamma^9 + 28696258008 \gamma^{10} + 5917165776 \gamma^{11} + 
           750385512 \gamma^{12} + 50213088 \gamma^{13} + 1231200 \gamma^{14}) + 
        2 \beta^6 (204806499 + 3007685993 \gamma + 19643603096 \gamma^2 + 
           75735965954 \gamma^3 + 192883467858 \gamma^4 + 343720583982 \gamma^5 + 
           442263814836 \gamma^6 + 417268888848 \gamma^7 + 289647444492 \gamma^8 + 
           146799004692 \gamma^9 + 53259240312 \gamma^{10} + 13347995544 \gamma^{11} + 
           2176399368 \gamma^{12} + 208048392 \gamma^{13} + 9520416 \gamma^{14} + 
           123552 \gamma^{15}) + 
        6 \beta^5 (35852801 + 563320483 \gamma + 3945044496 \gamma^2 + 
           16354965660 \gamma^3 + 44951419536 \gamma^4 + 86859260112 \gamma^5 + 
           121923443826 \gamma^6 + 126457725168 \gamma^7 + 97440288978 \gamma^8 + 
           55507282956 \gamma^9 + 23010905880 \gamma^{10} + 6740792352 \gamma^{11} + 
           1327997232 \gamma^{12} + 161791920 \gamma^{13} + 10431936 \gamma^{14} + 
           247104 \gamma^{15}) + 
        6 \beta^2 (895593 + 16918230 \gamma + 143019770 \gamma^2 + 719571964 \gamma^3 + 
           2417517258 \gamma^4 + 5763228960 \gamma^5 + 10096664438 \gamma^6 + 
           13256112870 \gamma^7 + 13153915698 \gamma^8 + 9854448738 \gamma^9 + 
           5514133236 \gamma^{10} + 2253373488 \gamma^{11} + 646681608 \gamma^{12} + 
           121835640 \gamma^{13} + 13279680 \gamma^{14} + 617760 \gamma^{15}) + 
        2 \beta (354935 + 7091552 \gamma + 63454583 \gamma^2 + 338329577 \gamma^3 + 
           1206486897 \gamma^4 + 3058821745 \gamma^5 + 5712295973 \gamma^6 + 
           8016255824 \gamma^7 + 8528788544 \gamma^8 + 6875034108 \gamma^9 + 
           4155639972 \gamma^{10} + 1842405000 \gamma^{11} + 576332784 \gamma^{12} + 
           118976976 \gamma^{13} + 14302656 \gamma^{14} + 741312 \gamma^{15}) + 
        2 \beta^3 (12852393 + 228969683 \gamma + 1823576014 \gamma^2 + 
           8631201200 \gamma^3 + 27226445866 \gamma^4 + 60792203020 \gamma^5 + 
           99454958954 \gamma^6 + 121503100404 \gamma^7 + 111718158762 \gamma^8 + 
           77169318624 \gamma^9 + 39580415520 \gamma^{10} + 14722040208 \gamma^{11} + 
           3812580552 \gamma^{12} + 641201184 \gamma^{13} + 61512480 \gamma^{14} + 
           2471040 \gamma^{15}) + 
        \beta^4 (86373735 + 1447238734 \gamma + 10826508380 \gamma^2 + 
           48048252116 \gamma^3 + 141780998438 \gamma^4 + 295241111048 \gamma^5 + 
           448758270192 \gamma^6 + 507016522116 \gamma^7 + 428719232916 \gamma^8 + 
           270487522788 \gamma^9 + 125658027888 \gamma^{10} + 41889305184 \gamma^{11} + 
           9590687064 \gamma^{12} + 1400023008 \gamma^{13} + 113568480 \gamma^{14} + 
           3706560 \gamma^{15})) + 
     2 \alpha^2 (945 + 25140 \gamma + 298662 \gamma^2 + 2109468 \gamma^3 + 9958968 \gamma^4 + 
        33486216 \gamma^5 + 83285952 \gamma^6 + 156665280 \gamma^7 + 225273008 \gamma^8 + 
        247789632 \gamma^9 + 206497248 \gamma^{10} + 127489728 \gamma^{11} + 
        55960704 \gamma^{12} + 16219008 \gamma^{13} + 2688768 \gamma^{14} + 179712 \gamma^{15} + 
        152064 \beta^{16} (1 + \gamma)^{10} + 
        6912 \beta^{15} (1 + \gamma)^7 (245 + 958 \gamma + 1239 \gamma^2 + 674 \gamma^3 + 
           112 \gamma^4) + 
        1152 \beta^{14} (1 + \gamma)^4 (7766 + 60934 \gamma + 198419 \gamma^2 + 
           352104 \gamma^3 + 373782 \gamma^4 + 241794 \gamma^5 + 91599 \gamma^6 + 
           18024 \gamma^7 + 1380 \gamma^8) + 
        1152 \beta^{13} (25747 + 329837 \gamma + 1894068 \gamma^2 + 6464745 \gamma^3 + 
           14648610 \gamma^4 + 23275746 \gamma^5 + 26654972 \gamma^6 + 
           22217956 \gamma^7 + 13419003 \gamma^8 + 5759625 \gamma^9 + 1690284 \gamma^{10} + 
           316587 \gamma^{11} + 33246 \gamma^{12} + 1434 \gamma^{13}) + 
        576 \beta^{12} (119760 + 1652202 \gamma + 10213099 \gamma^2 + 37535814 \gamma^3 + 
           91728185 \gamma^4 + 157708514 \gamma^5 + 196517596 \gamma^6 + 
           179793816 \gamma^7 + 120771194 \gamma^8 + 58824688 \gamma^9 + 
           20224939 \gamma^{10} + 4683438 \gamma^{11} + 674153 \gamma^{12} + 51948 \gamma^{13} + 
           1500 \gamma^{14}) + 
        192 \beta^{11} (620542 + 9181200 \gamma + 60841566 \gamma^2 + 239767342 \gamma^3 + 
           629015253 \gamma^4 + 1163883711 \gamma^5 + 1567365714 \gamma^6 + 
           1559700420 \gamma^7 + 1150345833 \gamma^8 + 623767833 \gamma^9 + 
           243699084 \gamma^{10} + 66183462 \gamma^{11} + 11772198 \gamma^{12} + 
           1238562 \gamma^{13} + 63288 \gamma^{14} + 936 \gamma^{15}) + 
        96 \beta^{10} (1644079 + 25985194 \gamma + 183889668 \gamma^2 + 
           774046148 \gamma^3 + 2171082743 \gamma^4 + 4303446630 \gamma^5 + 
           6228336672 \gamma^6 + 6693270000 \gamma^7 + 5368240158 \gamma^8 + 
           3196556436 \gamma^9 + 1390549224 \gamma^{10} + 428995368 \gamma^{11} + 
           89342802 \gamma^{12} + 11564244 \gamma^{13} + 799128 \gamma^{14} + 
           20592 \gamma^{15}) + 
        96 \beta^9 (1701427 + 28625021 \gamma + 215574820 \gamma^2 + 
           965871822 \gamma^3 + 2885988454 \gamma^4 + 6103704054 \gamma^5 + 
           9449758694 \gamma^6 + 10903990516 \gamma^7 + 9439482997 \gamma^8 + 
           6110444049 \gamma^9 + 2917972298 \gamma^{10} + 1001551064 \gamma^{11} + 
           236491542 \gamma^{12} + 35688288 \gamma^{13} + 3007080 \gamma^{14} + 
           102960 \gamma^{15}) + 
        6 \beta (3185 + 81009 \gamma + 921162 \gamma^2 + 6234348 \gamma^3 + 
           28226558 \gamma^4 + 91061128 \gamma^5 + 217316288 \gamma^6 + 
           392138896 \gamma^7 + 540659952 \gamma^8 + 569912368 \gamma^9 + 
           454923904 \gamma^{10} + 268979136 \gamma^{11} + 113128992 \gamma^{12} + 
           31481472 \gamma^{13} + 5037696 \gamma^{14} + 329472 \gamma^{15}) + 
        96 \beta^7 (895593 + 16918230 \gamma + 143019770 \gamma^2 + 719571964 \gamma^3 + 
           2417517258 \gamma^4 + 5763228960 \gamma^5 + 10096664438 \gamma^6 + 
           13256112870 \gamma^7 + 13153915698 \gamma^8 + 9854448738 \gamma^9 + 
           5514133236 \gamma^{10} + 2253373488 \gamma^{11} + 646681608 \gamma^{12} + 
           121835640 \gamma^{13} + 13279680 \gamma^{14} + 617760 \gamma^{15}) + 
        48 \beta^8 (2776867 + 49573058 \gamma + 396079080 \gamma^2 + 
           1883108180 \gamma^3 + 5974814036 \gamma^4 + 13436181548 \gamma^5 + 
           22164972632 \gamma^6 + 27333749652 \gamma^7 + 25392155721 \gamma^8 + 
           17734012500 \gamma^9 + 9201881118 \gamma^{10} + 3463987236 \gamma^{11} + 
           908231880 \gamma^{12} + 154752984 \gamma^{13} + 15070320 \gamma^{14} + 
           617760 \gamma^{15}) + 
        12 \beta^2 (15676 + 380808 \gamma + 4138164 \gamma^2 + 26777400 \gamma^3 + 
           115942673 \gamma^4 + 357676818 \gamma^5 + 815921628 \gamma^6 + 
           1406382712 \gamma^7 + 1850635512 \gamma^8 + 1860042160 \gamma^9 + 
           1414406304 \gamma^{10} + 796140480 \gamma^{11} + 318774480 \gamma^{12} + 
           84578976 \gamma^{13} + 12968640 \gamma^{14} + 823680 \gamma^{15}) + 
        24 \beta^5 (720623 + 15135703 \gamma + 142248126 \gamma^2 + 795917647 \gamma^3 + 
           2976842776 \gamma^4 + 7915916984 \gamma^5 + 15515636436 \gamma^6 + 
           22883885252 \gamma^7 + 25639598540 \gamma^8 + 21822604520 \gamma^9 + 
           13973350440 \gamma^{10} + 6588305760 \gamma^{11} + 2201223264 \gamma^{12} + 
           487245888 \gamma^{13} + 62852544 \gamma^{14} + 3459456 \gamma^{15}) + 
        12 \beta^4 (436982 + 9649116 \gamma + 95335075 \gamma^2 + 560869562 \gamma^3 + 
           2206678385 \gamma^4 + 6178052652 \gamma^5 + 12765779996 \gamma^6 + 
           19882120952 \gamma^7 + 23570605216 \gamma^8 + 21275405616 \gamma^9 + 
           14482429936 \gamma^{10} + 7276899456 \gamma^{11} + 2596600464 \gamma^{12} + 
           614604864 \gamma^{13} + 84680640 \gamma^{14} + 4942080 \gamma^{15}) + 
        4 \beta^3 (295437 + 6847089 \gamma + 71001480 \gamma^2 + 438448794 \gamma^3 + 
           1811366850 \gamma^4 + 5329074502 \gamma^5 + 11583967920 \gamma^6 + 
           19006172792 \gamma^7 + 23776227568 \gamma^8 + 22687038528 \gamma^9 + 
           16355870112 \gamma^{10} + 8718601632 \gamma^{11} + 3304336608 \gamma^{12} + 
           830651616 \gamma^{13} + 121184640 \gamma^{14} + 7413120 \gamma^{15}) + 
        8 \beta^6 (5459257 + 108860824 \gamma + 971350369 \gamma^2 + 
           5159281168 \gamma^3 + 18308495454 \gamma^4 + 46149064052 \gamma^5 + 
           85619908768 \gamma^6 + 119302759096 \gamma^7 + 125986723924 \gamma^8 + 
           100788060504 \gamma^9 + 60467961072 \gamma^{10} + 26621173632 \gamma^{11} + 
           8275782672 \gamma^{12} + 1699061472 \gamma^{13} + 202958784 \gamma^{14} + 
           10378368 \gamma^{15})) + 
     8 \alpha^5 (30513 + 710253 \gamma + 7388664 \gamma^2 + 45729222 \gamma^3 + 
        189176247 \gamma^4 + 556770619 \gamma^5 + 1209448620 \gamma^6 + 
        1980822668 \gamma^7 + 2470651048 \gamma^8 + 2347662648 \gamma^9 + 
        1683260592 \gamma^{10} + 891089664 \gamma^{11} + 334901232 \gamma^{12} + 
        83403504 \gamma^{13} + 12075264 \gamma^{14} + 741312 \gamma^{15} + 
        456192 \beta^{16} (1 + \gamma)^7 + 
        5184 \beta^{15} (1 + \gamma)^4 (1127 + 4420 \gamma + 6036 \gamma^2 + 3488 \gamma^3 + 
           703 \gamma^4) + 
        576 \beta^{14} (60407 + 535473 \gamma + 2057886 \gamma^2 + 4508000 \gamma^3 + 
           6210534 \gamma^4 + 5581812 \gamma^5 + 3270186 \gamma^6 + 1201788 \gamma^7 + 
           250533 \gamma^8 + 22473 \gamma^9) + 
        144 \beta^{13} (899019 + 8832892 \gamma + 37842492 \gamma^2 + 93216404 \gamma^3 + 
           146380586 \gamma^4 + 153205140 \gamma^5 + 108182904 \gamma^6 + 
           50806152 \gamma^7 + 15141591 \gamma^8 + 2574636 \gamma^9 + 188532 \gamma^{10}) + 
        144 \beta^{12} (2357095 + 25444653 \gamma + 120398730 \gamma^2 + 
           329916916 \gamma^3 + 582373152 \gamma^4 + 695822880 \gamma^5 + 
           574052682 \gamma^6 + 326509164 \gamma^7 + 125098407 \gamma^8 + 
           30603689 \gamma^9 + 4272360 \gamma^{10} + 255228 \gamma^{11}) + 
        48 \beta^{11} (13878853 + 163335480 \gamma + 846436347 \gamma^2 + 
           2555731368 \gamma^3 + 5012833029 \gamma^4 + 6733073982 \gamma^5 + 
           6348354750 \gamma^6 + 4226605272 \gamma^7 + 1965585132 \gamma^8 + 
           619358832 \gamma^9 + 124502562 \gamma^{10} + 14161824 \gamma^{11} + 
           676476 \gamma^{12}) + 
        144 \beta^{10} (7022006 + 89492871 \gamma + 504203554 \gamma^2 + 
           1663729339 \gamma^3 + 3590601953 \gamma^4 + 5354821229 \gamma^5 + 
           5674613530 \gamma^6 + 4318168326 \gamma^7 + 2350739340 \gamma^8 + 
           898687554 \gamma^9 + 232349904 \gamma^{10} + 37934550 \gamma^{11} + 
           3429204 \gamma^{12} + 124908 \gamma^{13}) + 
        24 \beta^9 (50317235 + 690580122 \gamma + 4203806094 \gamma^2 + 
           15052823248 \gamma^3 + 35449700907 \gamma^4 + 58102621684 \gamma^5 + 
           68298924532 \gamma^6 + 58360769120 \gamma^7 + 36273044390 \gamma^8 + 
           16207508316 \gamma^9 + 5071711008 \gamma^{10} + 1060891200 \gamma^{11} + 
           136791678 \gamma^{12} + 9330264 \gamma^{13} + 233784 \gamma^{14}) + 
        24 \beta^8 (47659660 + 701176360 \gamma + 4588005685 \gamma^2 + 
           17721801346 \gamma^3 + 45222255813 \gamma^4 + 80767168443 \gamma^5 + 
           104199735414 \gamma^6 + 98625849300 \gamma^7 + 68724178938 \gamma^8 + 
           34989967698 \gamma^9 + 12762985068 \gamma^{10} + 3218911596 \gamma^{11} + 
           528701562 \gamma^{12} + 50968386 \gamma^{13} + 2354832 \gamma^{14} + 
           30888 \gamma^{15}) + 
        24 \beta^7 (35852801 + 563320483 \gamma + 3945044496 \gamma^2 + 
           16354965660 \gamma^3 + 44951419536 \gamma^4 + 86859260112 \gamma^5 + 
           121923443826 \gamma^6 + 126457725168 \gamma^7 + 97440288978 \gamma^8 + 
           55507282956 \gamma^9 + 23010905880 \gamma^{10} + 6740792352 \gamma^{11} + 
           1327997232 \gamma^{12} + 161791920 \gamma^{13} + 10431936 \gamma^{14} + 
           247104 \gamma^{15}) + 
        3 \beta (174031 + 3851418 \gamma + 38124084 \gamma^2 + 224651400 \gamma^3 + 
           885078867 \gamma^4 + 2480613356 \gamma^5 + 5129258140 \gamma^6 + 
           7990589104 \gamma^7 + 9470862808 \gamma^8 + 8542536640 \gamma^9 + 
           5808053376 \gamma^{10} + 2913566592 \gamma^{11} + 1037630064 \gamma^{12} + 
           245146944 \gamma^{13} + 33753024 \gamma^{14} + 1976832 \gamma^{15}) + 
        8 \beta^6 (63993216 + 1070490502 \gamma + 7994749364 \gamma^2 + 
           35419909934 \gamma^3 + 104322724331 \gamma^4 + 216783436778 \gamma^5 + 
           328715624856 \gamma^6 + 370374850422 \gamma^7 + 312205548330 \gamma^8 + 
           196280852046 \gamma^9 + 90816883080 \gamma^{10} + 30134466744 \gamma^{11} + 
           6862414068 \gamma^{12} + 995515920 \gamma^{13} + 80169264 \gamma^{14} + 
           2594592 \gamma^{15}) + 
        6 \beta^2 (720623 + 15135703 \gamma + 142248126 \gamma^2 + 795917647 \gamma^3 + 
           2976842776 \gamma^4 + 7915916984 \gamma^5 + 15515636436 \gamma^6 + 
           22883885252 \gamma^7 + 25639598540 \gamma^8 + 21822604520 \gamma^9 + 
           13973350440 \gamma^{10} + 6588305760 \gamma^{11} + 2201223264 \gamma^{12} + 
           487245888 \gamma^{13} + 62852544 \gamma^{14} + 3459456 \gamma^{15}) + 
        3 \beta^5 (79488585 + 1412003268 \gamma + 11210791228 \gamma^2 + 
           52884030240 \gamma^3 + 166192972712 \gamma^4 + 369513115712 \gamma^5 + 
           601659842592 \gamma^6 + 731206082112 \gamma^7 + 668480236008 \gamma^8 + 
           458870633952 \gamma^9 + 233741574048 \gamma^{10} + 86276603904 \gamma^{11} + 
           22149541152 \gamma^{12} + 3687707520 \gamma^{13} + 349550208 \gamma^{14} + 
           13837824 \gamma^{15}) + 
        3 \beta^4 (28473753 + 535885939 \gamma + 4511109796 \gamma^2 + 
           22586245900 \gamma^3 + 75454653016 \gamma^4 + 178732102592 \gamma^5 + 
           310923229400 \gamma^6 + 405136603424 \gamma^7 + 398809282680 \gamma^8 + 
           296283261960 \gamma^9 + 164345241744 \gamma^{10} + 66546195264 \gamma^{11} + 
           18910101504 \gamma^{12} + 3523378656 \gamma^{13} + 378870912 \gamma^{14} + 
           17297280 \gamma^{15}) + 
        2 \beta^3 (11418400 + 227226352 \gamma + 2023268693 \gamma^2 + 
           10722383632 \gamma^3 + 37957401440 \gamma^4 + 95429590116 \gamma^5 + 
           176578805276 \gamma^6 + 245389944608 \gamma^7 + 258460187792 \gamma^8 + 
           206237762832 \gamma^9 + 123422213280 \gamma^{10} + 54198878880 \gamma^{11} + 
           16802792640 \gamma^{12} + 3438437184 \gamma^{13} + 408893184 \gamma^{14} + 
           20756736 \gamma^{15})) + 
     4 \alpha^3 (3375 + 86033 \gamma + 979230 \gamma^2 + 6625272 \gamma^3 + 
        29950718 \gamma^4 + 96368312 \gamma^5 + 229144480 \gamma^6 + 411607600 \gamma^7 + 
        564473648 \gamma^8 + 591405360 \gamma^9 + 468905088 \gamma^{10} + 
        275222400 \gamma^{11} + 114864288 \gamma^{12} + 31720320 \gamma^{13} + 
        5044608 \gamma^{14} + 329472 \gamma^{15} + 253440 \beta^{16} (1 + \gamma)^9 + 
        1152 \beta^{15} (1 + \gamma)^6 (2569 + 10056 \gamma + 13269 \gamma^2 + 7366 \gamma^3 + 
           1332 \gamma^4) + 
        2304 \beta^{14} (1 + \gamma)^3 (7049 + 55358 \gamma + 181741 \gamma^2 + 
           327194 \gamma^3 + 354381 \gamma^4 + 235938 \gamma^5 + 93493 \gamma^6 + 
           19848 \gamma^7 + 1704 \gamma^8) + 
        192 \beta^{13} (290296 + 3430374 \gamma + 18029571 \gamma^2 + 55812228 \gamma^3 + 
           113467326 \gamma^4 + 159661986 \gamma^5 + 159313772 \gamma^6 + 
           113319432 \gamma^7 + 56794014 \gamma^8 + 19438740 \gamma^9 + 
           4275501 \gamma^{10} + 536256 \gamma^{11} + 28530 \gamma^{12}) + 
        192 \beta^{12} (699045 + 8945188 \gamma + 50961936 \gamma^2 + 171338739 \gamma^3 + 
           379706143 \gamma^4 + 585951258 \gamma^5 + 647262722 \gamma^6 + 
           516970566 \gamma^7 + 297272430 \gamma^8 + 120747642 \gamma^9 + 
           33353010 \gamma^{10} + 5851569 \gamma^{11} + 572508 \gamma^{12} + 22806 \gamma^{13}) + 
        96 \beta^{11} (2504924 + 34531804 \gamma + 212141602 \gamma^2 + 
           770439968 \gamma^3 + 1849820919 \gamma^4 + 3107393420 \gamma^5 + 
           3763080608 \gamma^6 + 3329142856 \gamma^7 + 2152325077 \gamma^8 + 
           1004762976 \gamma^9 + 329890686 \gamma^{10} + 72722772 \gamma^{11} + 
           9933378 \gamma^{12} + 722808 \gamma^{13} + 19512 \gamma^{14}) + 
        32 \beta^{10} (10351008 + 153003966 \gamma + 1008770556 \gamma^2 + 
           3937971914 \gamma^3 + 10188924021 \gamma^4 + 18514557807 \gamma^5 + 
           24387085926 \gamma^6 + 23647686948 \gamma^7 + 16937706327 \gamma^8 + 
           8892593871 \gamma^9 + 3355419528 \gamma^{10} + 878283522 \gamma^{11} + 
           150278058 \gamma^{12} + 15165558 \gamma^{13} + 738504 \gamma^{14} + 
           10296 \gamma^{15}) + 
        16 \beta^9 (22328959 + 352438856 \gamma + 2483323794 \gamma^2 + 
           10375060656 \gamma^3 + 28791287962 \gamma^4 + 56287911900 \gamma^5 + 
           80115084192 \gamma^6 + 84441054384 \gamma^7 + 66262973733 \gamma^8 + 
           38524767414 \gamma^9 + 16335200220 \gamma^{10} + 4905694944 \gamma^{11} + 
           993416184 \gamma^{12} + 124822296 \gamma^{13} + 8339040 \gamma^{14} + 
           205920 \gamma^{15}) + 
        16 \beta^8 (19033200 + 319649947 \gamma + 2398301942 \gamma^2 + 
           10682700308 \gamma^3 + 31664022014 \gamma^4 + 66291010880 \gamma^5 + 
           101393402664 \gamma^6 + 115376640372 \gamma^7 + 98340272271 \gamma^8 + 
           62592326025 \gamma^9 + 29358364290 \gamma^{10} + 9889812348 \gamma^{11} + 
           2290463622 \gamma^{12} + 338684976 \gamma^{13} + 27886680 \gamma^{14} + 
           926640 \gamma^{15}) + 
        16 \beta^7 (12852393 + 228969683 \gamma + 1823576014 \gamma^2 + 
           8631201200 \gamma^3 + 27226445866 \gamma^4 + 60792203020 \gamma^5 + 
           99454958954 \gamma^6 + 121503100404 \gamma^7 + 111718158762 \gamma^8 + 
           77169318624 \gamma^9 + 39580415520 \gamma^{10} + 14722040208 \gamma^{11} + 
           3812580552 \gamma^{12} + 641201184 \gamma^{13} + 61512480 \gamma^{14} + 
           2471040 \gamma^{15}) + 
        \beta (63997 + 1556454 \gamma + 16924368 \gamma^2 + 109526656 \gamma^3 + 
           474047592 \gamma^4 + 1461121912 \gamma^5 + 3328610784 \gamma^6 + 
           5727354368 \gamma^7 + 7520422096 \gamma^8 + 7539996000 \gamma^9 + 
           5717867904 \gamma^{10} + 3209132928 \gamma^{11} + 1281185280 \gamma^{12} + 
           339018624 \gamma^{13} + 51881472 \gamma^{14} + 3294720 \gamma^{15}) + 
        2 \beta^2 (295437 + 6847089 \gamma + 71001480 \gamma^2 + 438448794 \gamma^3 + 
           1811366850 \gamma^4 + 5329074502 \gamma^5 + 11583967920 \gamma^6 + 
           19006172792 \gamma^7 + 23776227568 \gamma^8 + 22687038528 \gamma^9 + 
           16355870112 \gamma^{10} + 8718601632 \gamma^{11} + 3304336608 \gamma^{12} + 
           830651616 \gamma^{13} + 121184640 \gamma^{14} + 7413120 \gamma^{15}) + 
        8 \beta^6 (13701587 + 258270755 \gamma + 2177502142 \gamma^2 + 
           10919613352 \gamma^3 + 36540698068 \gamma^4 + 86709780580 \gamma^5 + 
           151124772832 \gamma^6 + 197303902068 \gamma^7 + 194618759220 \gamma^8 + 
           144896066124 \gamma^9 + 80558079816 \gamma^{10} + 32703773280 \gamma^{11} + 
           9321403056 \gamma^{12} + 1743177744 \gamma^{13} + 188310528 \gamma^{14} + 
           8648640 \gamma^{15}) + 
        4 \beta^4 (3654702 + 76635274 \gamma + 719183610 \gamma^2 + 
           4018718301 \gamma^3 + 15012476870 \gamma^4 + 39878634556 \gamma^5 + 
           78097733600 \gamma^6 + 115114949380 \gamma^7 + 128929971196 \gamma^8 + 
           109720249080 \gamma^9 + 70256502600 \gamma^{10} + 33126966144 \gamma^{11} + 
           11066904672 \gamma^{12} + 2448324864 \gamma^{13} + 315351360 \gamma^{14} + 
           17297280 \gamma^{15}) + 
        2 \beta^3 (1745337 + 38501672 \gamma + 380120866 \gamma^2 + 
           2235103532 \gamma^3 + 8790798894 \gamma^4 + 24608666168 \gamma^5 + 
           50854858808 \gamma^6 + 79232030848 \gamma^7 + 93985633216 \gamma^8 + 
           84899915904 \gamma^9 + 57845665056 \gamma^{10} + 29093607360 \gamma^{11} + 
           10390616160 \gamma^{12} + 2460924288 \gamma^{13} + 339085440 \gamma^{14} + 
           19768320 \gamma^{15}) + 
        4 \beta^5 (11418400 + 227226352 \gamma + 2023268693 \gamma^2 + 
           10722383632 \gamma^3 + 37957401440 \gamma^4 + 95429590116 \gamma^5 + 
           176578805276 \gamma^6 + 245389944608 \gamma^7 + 258460187792 \gamma^8 + 
           206237762832 \gamma^9 + 123422213280 \gamma^{10} + 54198878880 \gamma^{11} + 
           16802792640 \gamma^{12} + 3438437184 \gamma^{13} + 408893184 \gamma^{14} + 
           20756736 \gamma^{15})) + 
     8 \alpha^6 (84231 + 1867880 \gamma + 18531878 \gamma^2 + 109501792 \gamma^3 + 
        432809387 \gamma^4 + 1217400396 \gamma^5 + 2526813284 \gamma^6 + 
        3951435944 \gamma^7 + 4700685728 \gamma^8 + 4253940816 \gamma^9 + 
        2899691520 \gamma^{10} + 1456519776 \gamma^{11} + 518405040 \gamma^{12} + 
        122122944 \gamma^{13} + 16759872 \gamma^{14} + 988416 \gamma^{15} + 
        532224 \beta^{16} (1 + \gamma)^6 + 
        3456 \beta^{15} (1 + \gamma)^3 (2071 + 8130 \gamma + 11271 \gamma^2 + 6646 \gamma^3 + 
           1392 \gamma^4) + 
        576 \beta^{14} (77431 + 609326 \gamma + 2043643 \gamma^2 + 3821832 \gamma^3 + 
           4363764 \gamma^4 + 3115962 \gamma^5 + 1357749 \gamma^6 + 329448 \gamma^7 + 
           33984 \gamma^8) + 
        192 \beta^{13} (905437 + 7993941 \gamma + 30374742 \gamma^2 + 65266730 \gamma^3 + 
           87502725 \gamma^4 + 75961359 \gamma^5 + 42688926 \gamma^6 + 
           14957271 \gamma^7 + 2958003 \gamma^8 + 250683 \gamma^9) + 
        48 \beta^{12} (9987579 + 97834318 \gamma + 415912116 \gamma^2 + 
           1011241464 \gamma^3 + 1559036120 \gamma^4 + 1593783936 \gamma^5 + 
           1094130354 \gamma^6 + 497515428 \gamma^7 + 143064153 \gamma^8 + 
           23399484 \gamma^9 + 1643148 \gamma^{10}) + 
        48 \beta^{11} (20712775 + 222961725 \gamma + 1048949606 \gamma^2 + 
           2848718020 \gamma^3 + 4967195728 \gamma^4 + 5843028064 \gamma^5 + 
           4731024660 \gamma^6 + 2633298420 \gamma^7 + 984662748 \gamma^8 + 
           234477204 \gamma^9 + 31773744 \gamma^{10} + 1836288 \gamma^{11}) + 
        8 \beta^{10} (200060251 + 2347598010 \gamma + 12110199180 \gamma^2 + 
           36337115952 \gamma^3 + 70695226782 \gamma^4 + 93998991036 \gamma^5 + 
           87553159008 \gamma^6 + 57463109352 \gamma^7 + 26287614180 \gamma^8 + 
           8130225024 \gamma^9 + 1600273764 \gamma^{10} + 177741216 \gamma^{11} + 
           8262576 \gamma^{12}) + 
        16 \beta^9 (126990284 + 1613821209 \gamma + 9058957899 \gamma^2 + 
           29760798636 \gamma^3 + 63889144377 \gamma^4 + 94669904847 \gamma^5 + 
           99548350620 \gamma^6 + 75056010414 \gamma^7 + 40418483490 \gamma^8 + 
           15258697452 \gamma^9 + 3888266652 \gamma^{10} + 624347964 \gamma^{11} + 
           55373292 \gamma^{12} + 1973268 \gamma^{13}) + 
        8 \beta^8 (255947350 + 3504003826 \gamma + 21270659626 \gamma^2 + 
           75937680414 \gamma^3 + 178236009003 \gamma^4 + 290981558700 \gamma^5 + 
           340427239320 \gamma^6 + 289237167468 \gamma^7 + 178550842014 \gamma^8 + 
           79141751424 \gamma^9 + 24533825652 \gamma^{10} + 5076324216 \gamma^{11} + 
           646364070 \gamma^{12} + 43455096 \gamma^{13} + 1071144 \gamma^{14}) + 
        8 \beta^7 (204806499 + 3007685993 \gamma + 19643603096 \gamma^2 + 
           75735965954 \gamma^3 + 192883467858 \gamma^4 + 343720583982 \gamma^5 + 
           442263814836 \gamma^6 + 417268888848 \gamma^7 + 289647444492 \gamma^8 + 
           146799004692 \gamma^9 + 53259240312 \gamma^{10} + 13347995544 \gamma^{11} + 
           2176399368 \gamma^{12} + 208048392 \gamma^{13} + 9520416 \gamma^{14} + 
           123552 \gamma^{15}) + 
        8 \beta^6 (129486260 + 2032597228 \gamma + 14221719442 \gamma^2 + 
           58907561024 \gamma^3 + 161761558443 \gamma^4 + 312258611670 \gamma^5 + 
           437804542428 \gamma^6 + 453461699616 \gamma^7 + 348839177136 \gamma^8 + 
           198334427376 \gamma^9 + 82033536528 \gamma^{10} + 23966377776 \gamma^{11} + 
           4706760564 \gamma^{12} + 571337928 \gamma^{13} + 36684144 \gamma^{14} + 
           864864 \gamma^{15}) + 
        4 \beta (345385 + 7270481 \gamma + 68491211 \gamma^2 + 384249297 \gamma^3 + 
           1441436474 \gamma^4 + 3845306644 \gamma^5 + 7561803840 \gamma^6 + 
           11188995262 \gamma^7 + 12575001478 \gamma^8 + 10733008212 \gamma^9 + 
           6889139244 \gamma^{10} + 3254378856 \gamma^{11} + 1088800752 \gamma^{12} + 
           241266240 \gamma^{13} + 31187808 \gamma^{14} + 1729728 \gamma^{15}) + 
        8 \beta^5 (63993216 + 1070490502 \gamma + 7994749364 \gamma^2 + 
           35419909934 \gamma^3 + 104322724331 \gamma^4 + 216783436778 \gamma^5 + 
           328715624856 \gamma^6 + 370374850422 \gamma^7 + 312205548330 \gamma^8 + 
           196280852046 \gamma^9 + 90816883080 \gamma^{10} + 30134466744 \gamma^{11} + 
           6862414068 \gamma^{12} + 995515920 \gamma^{13} + 80169264 \gamma^{14} + 
           2594592 \gamma^{15}) + 
        4 \beta^3 (13701587 + 258270755 \gamma + 2177502142 \gamma^2 + 
           10919613352 \gamma^3 + 36540698068 \gamma^4 + 86709780580 \gamma^5 + 
           151124772832 \gamma^6 + 197303902068 \gamma^7 + 194618759220 \gamma^8 + 
           144896066124 \gamma^9 + 80558079816 \gamma^{10} + 32703773280 \gamma^{11} + 
           9321403056 \gamma^{12} + 1743177744 \gamma^{13} + 188310528 \gamma^{14} + 
           8648640 \gamma^{15}) + 
        2 \beta^2 (5459257 + 108860824 \gamma + 971350369 \gamma^2 + 
           5159281168 \gamma^3 + 18308495454 \gamma^4 + 46149064052 \gamma^5 + 
           85619908768 \gamma^6 + 119302759096 \gamma^7 + 125986723924 \gamma^8 + 
           100788060504 \gamma^9 + 60467961072 \gamma^{10} + 26621173632 \gamma^{11} + 
           8275782672 \gamma^{12} + 1699061472 \gamma^{13} + 202958784 \gamma^{14} + 
           10378368 \gamma^{15}) + 
        \beta^4 (194157927 + 3451903660 \gamma + 27429618440 \gamma^2 + 
           129497609080 \gamma^3 + 407301400616 \gamma^4 + 906414504560 \gamma^5 + 
           1477331108872 \gamma^6 + 1797369375120 \gamma^7 + 
           1645144802952 \gamma^8 + 1130793039072 \gamma^9 + 
           576879747024 \gamma^{10} + 213307601568 \gamma^{11} + 54876213312 \gamma^{12} + 
           9159272640 \gamma^{13} + 870791040 \gamma^{14} + 34594560 \gamma^{15})) + 
     4 \alpha^4 (16756 + 408558 \gamma + 4449084 \gamma^2 + 28805710 \gamma^3 + 
        124612737 \gamma^4 + 383532430 \gamma^5 + 871687260 \gamma^6 + 
        1495054424 \gamma^7 + 1955180872 \gamma^8 + 1950734256 \gamma^9 + 
        1470877824 \gamma^{10} + 820102368 \gamma^{11} + 324992016 \gamma^{12} + 
        85324896 \gamma^{13} + 12972096 \gamma^{14} + 823680 \gamma^{15} + 
        570240 \beta^{16} (1 + \gamma)^8 + 
        1152 \beta^{15} (1 + \gamma)^5 (6052 + 23713 \gamma + 31857 \gamma^2 + 18043 \gamma^3 + 
           3469 \gamma^4) + 
        288 \beta^{14} (1 + \gamma)^2 (138235 + 1086422 \gamma + 3594073 \gamma^2 + 
           6558600 \gamma^3 + 7237977 \gamma^4 + 4943598 \gamma^5 + 2032059 \gamma^6 + 
           455916 \gamma^7 + 42372 \gamma^8) + 
        288 \beta^{13} (493035 + 5335293 \gamma + 25448048 \gamma^2 + 70719196 \gamma^3 + 
           127376618 \gamma^4 + 156208130 \gamma^5 + 133005792 \gamma^6 + 
           78471876 \gamma^7 + 31326087 \gamma^8 + 8015953 \gamma^9 + 1174824 \gamma^{10} + 
           73980 \gamma^{11}) + 
        48 \beta^{12} (7414629 + 87459870 \gamma + 456004260 \gamma^2 + 
           1391022300 \gamma^3 + 2768146284 \gamma^4 + 3787978404 \gamma^5 + 
           3652951880 \gamma^6 + 2496465108 \gamma^7 + 1195608141 \gamma^8 + 
           389129022 \gamma^9 + 81031560 \gamma^{10} + 9579528 \gamma^{11} + 
           477648 \gamma^{12}) + 
        96 \beta^{11} (6931801 + 88562133 \gamma + 501425205 \gamma^2 + 
           1667076219 \gamma^3 + 3634966569 \gamma^4 + 5492132661 \gamma^5 + 
           5912513424 \gamma^6 + 4582465242 \gamma^7 + 2547008730 \gamma^8 + 
           996508260 \gamma^9 + 264297492 \gamma^{10} + 44381898 \gamma^{11} + 
           4139928 \gamma^{12} + 156276 \gamma^{13}) + 
        48 \beta^{10} (19982221 + 274972842 \gamma + 1680693627 \gamma^2 + 
           6051628508 \gamma^3 + 14354192721 \gamma^4 + 23737332056 \gamma^5 + 
           28203704840 \gamma^6 + 24404002612 \gamma^7 + 15387164296 \gamma^8 + 
           6987360408 \gamma^9 + 2226322194 \gamma^{10} + 475151076 \gamma^{11} + 
           62660778 \gamma^{12} + 4384440 \gamma^{13} + 113112 \gamma^{14}) + 
        16 \beta^9 (67819149 + 1000352193 \gamma + 6567233148 \gamma^2 + 
           25468663750 \gamma^3 + 65307839103 \gamma^4 + 117329699037 \gamma^5 + 
           152438589738 \gamma^6 + 145478832156 \gamma^7 + 102339946632 \gamma^8 + 
           52671163188 \gamma^9 + 19448043642 \gamma^{10} + 4972632300 \gamma^{11} + 
           829469862 \gamma^{12} + 81379638 \gamma^{13} + 3836160 \gamma^{14} + 
           51480 \gamma^{15}) + 
        24 \beta^8 (40503030 + 637811820 \gamma + 4477913206 \gamma^2 + 
           18615418430 \gamma^3 + 51326159139 \gamma^4 + 99544920786 \gamma^5 + 
           140339133144 \gamma^6 + 146297320176 \gamma^7 + 113387065128 \gamma^8 + 
           65023425228 \gamma^9 + 27160977024 \gamma^{10} + 8025325164 \gamma^{11} + 
           1596648078 \gamma^{12} + 196718724 \gamma^{13} + 12848760 \gamma^{14} + 
           308880 \gamma^{15}) + 
        2 \beta (150360 + 3491430 \gamma + 36251457 \gamma^2 + 224020938 \gamma^3 + 
           925651323 \gamma^4 + 2722187171 \gamma^5 + 5911492860 \gamma^6 + 
           9684086620 \gamma^7 + 12089051624 \gamma^8 + 11505163368 \gamma^9 + 
           8269245504 \gamma^{10} + 4393188672 \gamma^{11} + 1659250800 \gamma^{12} + 
           415777392 \gamma^{13} + 60531840 \gamma^{14} + 3706560 \gamma^{15}) + 
        8 \beta^7 (86373735 + 1447238734 \gamma + 10826508380 \gamma^2 + 
           48048252116 \gamma^3 + 141780998438 \gamma^4 + 295241111048 \gamma^5 + 
           448758270192 \gamma^6 + 507016522116 \gamma^7 + 428719232916 \gamma^8 + 
           270487522788 \gamma^9 + 125658027888 \gamma^{10} + 41889305184 \gamma^{11} + 
           9590687064 \gamma^{12} + 1400023008 \gamma^{13} + 113568480 \gamma^{14} + 
           3706560 \gamma^{15}) + 
        6 \beta^2 (436982 + 9649116 \gamma + 95335075 \gamma^2 + 560869562 \gamma^3 + 
           2206678385 \gamma^4 + 6178052652 \gamma^5 + 12765779996 \gamma^6 + 
           19882120952 \gamma^7 + 23570605216 \gamma^8 + 21275405616 \gamma^9 + 
           14482429936 \gamma^{10} + 7276899456 \gamma^{11} + 2596600464 \gamma^{12} + 
           614604864 \gamma^{13} + 84680640 \gamma^{14} + 4942080 \gamma^{15}) + 
        4 \beta^3 (3654702 + 76635274 \gamma + 719183610 \gamma^2 + 4018718301 \gamma^3 +
            15012476870 \gamma^4 + 39878634556 \gamma^5 + 78097733600 \gamma^6 + 
           115114949380 \gamma^7 + 128929971196 \gamma^8 + 109720249080 \gamma^9 + 
           70256502600 \gamma^{10} + 33126966144 \gamma^{11} + 11066904672 \gamma^{12} + 
           2448324864 \gamma^{13} + 315351360 \gamma^{14} + 17297280 \gamma^{15}) + 
        6 \beta^5 (28473753 + 535885939 \gamma + 4511109796 \gamma^2 + 
           22586245900 \gamma^3 + 75454653016 \gamma^4 + 178732102592 \gamma^5 + 
           310923229400 \gamma^6 + 405136603424 \gamma^7 + 398809282680 \gamma^8 + 
           296283261960 \gamma^9 + 164345241744 \gamma^{10} + 66546195264 \gamma^{11} + 
           18910101504 \gamma^{12} + 3523378656 \gamma^{13} + 378870912 \gamma^{14} + 
           17297280 \gamma^{15}) + 
        2 \beta^6 (194157927 + 3451903660 \gamma + 27429618440 \gamma^2 + 
           129497609080 \gamma^3 + 407301400616 \gamma^4 + 906414504560 \gamma^5 + 
           1477331108872 \gamma^6 + 1797369375120 \gamma^7 + 
           1645144802952 \gamma^8 + 1130793039072 \gamma^9 + 
           576879747024 \gamma^{10} + 213307601568 \gamma^{11} + 54876213312 \gamma^{12} + 
           9159272640 \gamma^{13} + 870791040 \gamma^{14} + 34594560 \gamma^{15}) + 
        \beta^4 (57822431 + 1149672506 \gamma + 10228852192 \gamma^2 + 
           54168317768 \gamma^3 + 191621892808 \gamma^4 + 481439052720 \gamma^5 + 
           890281694656 \gamma^6 + 1236523230496 \gamma^7 + 1301730372712 \gamma^8 + 
           1038228358128 \gamma^9 + 621026192544 \gamma^{10} + 
           272558371200 \gamma^{11} + 84433956192 \gamma^{12} + 17259185088 \gamma^{13} + 
           2049183360 \gamma^{14} + 103783680 \gamma^{15}))\Big]\Big/\\
           \Big[1 + 10 \gamma + 36 \gamma^2 + 
   64 \gamma^3 + 60 \gamma^4 + 24 \gamma^5 + 24 \beta^5 (1 + \gamma)^5 + 
   24 \alpha^5 (1 + \beta + \gamma)^5 + 
   12 \beta^4 (1 + \gamma)^2 (5 + 20 \gamma + 23 \gamma^2 + 10 \gamma^3) + 
   16 \beta^3 (4 + 28 \gamma + 72 \gamma^2 + 90 \gamma^3 + 57 \gamma^4 + 15 \gamma^5) + 
   12 \beta^2 (3 + 24 \gamma + 69 \gamma^2 + 96 \gamma^3 + 68 \gamma^4 + 20 \gamma^5) + 
   2 \beta (5 + 45 \gamma + 144 \gamma^2 + 224 \gamma^3 + 180 \gamma^4 + 60 \gamma^5) + 
   12 \alpha^4 (1 + \beta + \gamma)^2 (5 + 20 \gamma + 23 \gamma^2 + 10 \gamma^3 + 10 \beta^3 (1 + \gamma) +
       \beta^2 (23 + 46 \gamma + 16 \gamma^2) + 2 \beta (10 + 30 \gamma + 23 \gamma^2 + 5 \gamma^3)) + 
   16 \alpha^3 (4 + 28 \gamma + 72 \gamma^2 + 90 \gamma^3 + 57 \gamma^4 + 15 \gamma^5 + 
      15 \beta^5 (1 + \gamma)^2 + 3 \beta^4 (19 + 57 \gamma + 50 \gamma^2 + 13 \gamma^3) + 
      3 \beta^3 (30 + 120 \gamma + 155 \gamma^2 + 78 \gamma^3 + 13 \gamma^4) + 
      3 \beta^2 (24 + 120 \gamma + 206 \gamma^2 + 155 \gamma^3 + 50 \gamma^4 + 5 \gamma^5) + 
      \beta (28 + 168 \gamma + 360 \gamma^2 + 360 \gamma^3 + 171 \gamma^4 + 30 \gamma^5)) + 
   12 \alpha^2 (3 + 24 \gamma + 69 \gamma^2 + 96 \gamma^3 + 68 \gamma^4 + 20 \gamma^5 + 
      20 \beta^5 (1 + \gamma)^3 + 
      \beta^4 (68 + 272 \gamma + 366 \gamma^2 + 200 \gamma^3 + 36 \gamma^4) + 
      4 \beta^3 (24 + 120 \gamma + 206 \gamma^2 + 155 \gamma^3 + 50 \gamma^4 + 5 \gamma^5) + 
      2 \beta (12 + 84 \gamma + 207 \gamma^2 + 240 \gamma^3 + 136 \gamma^4 + 30 \gamma^5) + 
      \beta^2 (69 + 414 \gamma + 864 \gamma^2 + 824 \gamma^3 + 366 \gamma^4 + 60 \gamma^5)) + 
   2 \alpha (5 + 45 \gamma + 144 \gamma^2 + 224 \gamma^3 + 180 \gamma^4 + 60 \gamma^5 + 
      60 \beta^5 (1 + \gamma)^4 + 
      12 \beta^4 (15 + 75 \gamma + 136 \gamma^2 + 114 \gamma^3 + 43 \gamma^4 + 5 \gamma^5) + 
      12 \beta^2 (12 + 84 \gamma + 207 \gamma^2 + 240 \gamma^3 + 136 \gamma^4 + 30 \gamma^5) + 
      8 \beta^3 (28 + 168 \gamma + 360 \gamma^2 + 360 \gamma^3 + 171 \gamma^4 + 30 \gamma^5) + 
      3 \beta (15 + 120 \gamma + 336 \gamma^2 + 448 \gamma^3 + 300 \gamma^4 + 80 \gamma^5))\Big]^3$}
      
      \bigskip 
      
   \noindent 
      Remarkably, both the numerator and the denominator of this expression consist exclusively of  terms with positive coefficients. 
      In particular,  the second derivative of ${\mathcal A}$ along any  line segment
 $\alpha + \beta = \mbox{\bf const}$,  $\gamma = \widehat{\mbox{\bf const}}$, $\alpha, \beta > 0$,      is consequently everywhere positive, as claimed.    \end{proof}

\begin{lem}\label{symmetry3a}
In the  domain ${\zap U}\subset \check{\zap K}$ of our coordinates 
$(\alpha , \beta , \gamma ) \in (\RR^+)^3$, any critical point of ${\mathcal A}$
must lie on the plane $\alpha= \beta$. 
\end{lem}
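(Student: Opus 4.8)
The plan is to follow the template of the proof of Lemma~\ref{symmetry2}, with the strict convexity of Lemma~\ref{convex3} playing the role of Lemma~\ref{convex2}. The only additional ingredient required is a symmetry of $\mathcal{A}$ on ${\zap U}$ that interchanges the coordinates $\alpha$ and $\beta$ while fixing $\gamma$; once this is in hand, the conclusion $\alpha=\beta$ will follow purely formally.

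To produce the symmetry, recall that $\alpha$, $\beta$, $\gamma$ are the areas of the three exceptional divisors of $M=\CP_2\#3\overline{\CP}_2$, while the normalizing parameter $\delta$ records the hyperplane data. Taking the three blown-up points to be the coordinate points of $\CP_2$, the coordinate permutations furnish an action of $S_3$ on $M$ by biholomorphisms, permuting the three exceptional divisors and hence permuting $(\alpha,\beta,\gamma)$ while fixing $\delta$. In particular, the transposition exchanging the divisors of areas $\alpha$ and $\beta$ preserves the chart ${\zap U}$ (on which $\delta>0$), and, since $\mathcal{A}$ is a biholomorphic invariant of the K\"ahler class, it gives $\mathcal{A}(\alpha,\beta,\gamma)=\mathcal{A}(\beta,\alpha,\gamma)$ throughout ${\zap U}$. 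Equivalently, reflection across the plane $\alpha=\beta$ sends critical points of $\mathcal{A}$ to critical points. (As a consistency check, the same identity can be read off directly from the explicit formula for $\mathcal{A}$ displayed above, once the mixed $\alpha$--$\beta$ terms are collected.)

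With this symmetry available, the argument closes exactly as before. By Lemma~\ref{convex3}, the restriction of $\mathcal{A}$ to each segment $\alpha+\beta=\mbox{\bf const}$, $\gamma=\widehat{\mbox{\bf const}}$, $\alpha,\beta>0$, is strictly convex, so each such segment carries at most one critical point. The reflection $(\alpha,\beta,\gamma)\mapsto(\beta,\alpha,\gamma)$ preserves both $\alpha+\beta$ and $\gamma$, and so maps each such segment to itself; a critical point with $\alpha\neq\beta$ would therefore produce a second, distinct critical point $(\beta,\alpha,\gamma)$ on the same segment, contradicting strict convexity. Hence every critical point in ${\zap U}$ lies on $\alpha=\beta$. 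The one step warranting genuine care --- and the main obstacle --- is the verification that the exchange $\alpha\leftrightarrow\beta$ really is induced by an automorphism internal to the chart ${\zap U}$, i.e.\ that the relevant permutation of the blow-up data fixes $\delta$ rather than reversing its sign (as the Cremona transformation $\Phi$ between ${\zap U}$ and ${\zap U}^\prime$ does); this comes down to matching $(\alpha,\beta,\gamma,\delta)$ correctly to the exceptional-divisor and hyperplane classes.
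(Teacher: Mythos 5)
Your proof is correct and takes essentially the same approach as the paper: it invokes the symmetry ${\mathcal A}(\alpha,\beta,\gamma)={\mathcal A}(\beta,\alpha,\gamma)$ coming from the automorphism of $\CP_2\#3\overline{\CP}_2$ exchanging the first two exceptional divisors, then uses the strict convexity of Lemma~\ref{convex3} along the segments $\alpha+\beta=\mbox{\bf const}$, $\gamma=\widehat{\mbox{\bf const}}$ to rule out a critical point with $\alpha\neq\beta$, exactly as the paper does. Your additional care in verifying that the divisor exchange fixes $\delta$ (and hence preserves the chart ${\zap U}$, unlike the Cremona transformation $\Phi$) is a point the paper leaves implicit, but it does not alter the argument.
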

\begin{proof} Notice
that ${\mathcal A}(\alpha, \beta , \gamma )= {\mathcal A}( \beta , \alpha,\gamma )$,
since there is an automorphism of $\CP_2\# 3 \overline{\CP}_2$ which 
exchanges the first two exceptional divisors.  
If $(\alpha, \beta , \gamma )$ were a critical point with $\alpha\neq \beta$,
we would have a second one given by $( \beta , \alpha,\gamma )$, 
and these two critical points would be joined by a line segment of the 
form  $\alpha + \beta = \mbox{\bf const}$,  
$\gamma = \widehat{\mbox{\bf const}}$, $\alpha, \beta > 0$.
However, this is a contradiction, because 
 Lemma \ref{convex3} tells us that  $\mathcal A$ is strictly convex on this
 line segment, so that it cannot possibly contain  two distinct
 critical points of $\mathcal A$. It follows that we  must have $\alpha=\beta$ for
 any critical point. \end{proof}

\begin{lem}\label{symmetry3b}
In the  domain ${\zap U}\subset \check{\zap K}$ of our coordinates 
$(\alpha , \beta , \gamma ) \in (\RR^+)^3$, any critical point of ${\mathcal A}$
must lie on the line  $\alpha= \beta=\gamma$. 
\end{lem}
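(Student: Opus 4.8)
The plan is to upgrade Lemma~\ref{symmetry3a} to a second reflection constraint, and then combine the two. Lemma~\ref{symmetry3a} already forces any critical point in ${\zap U}$ to satisfy $\alpha=\beta$, so it suffices to run the identical convexity-versus-reflection argument a second time, using a transposition that moves $\gamma$, in order to obtain $\beta=\gamma$ as well. The conjunction $\alpha=\beta$ and $\beta=\gamma$ is exactly the assertion that the critical point lies on the line $\alpha=\beta=\gamma$.

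The first step is to record the relevant symmetries. Taking the three blown-up points to be $[1:0:0]$, $[0:1:0]$, $[0:0:1]$, any permutation of the homogeneous coordinates $z_1,z_2,z_3$ is a projective automorphism of $\CP_2$ permuting these points, and so lifts to a biholomorphism of $M=\CP_2\#3\overline{\CP}_2$ permuting the three exceptional divisors. In the dictionary between our coordinates and the geometric K\"ahler class, namely $\Omega=(\alpha+\beta+\gamma+\delta)H-\alpha E_1-\beta E_2-\gamma E_3$, such a permutation permutes $(\alpha,\beta,\gamma)$ while leaving $\delta=a-\alpha-\beta-\gamma$ fixed. Hence it preserves the chart ${\zap U}$ (where $\delta=1$) and acts there simply by the corresponding permutation of $(\alpha,\beta,\gamma)$. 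Since $\mathcal A$ is invariant under automorphisms, it follows that $\mathcal A$ is invariant under the full symmetric group $S_3$ acting on $(\alpha,\beta,\gamma)$; in particular $\mathcal A(\alpha,\beta,\gamma)=\mathcal A(\alpha,\gamma,\beta)$.

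Now I would argue exactly as in Lemma~\ref{symmetry3a}, but with the transposition $\beta\leftrightarrow\gamma$. If some critical point of $\mathcal A$ had $\beta\neq\gamma$, then $(\alpha,\gamma,\beta)$ would be a second, distinct critical point, and the two would be joined by a segment of the form $\beta+\gamma=\mbox{\bf const}$, $\alpha=\widehat{\mbox{\bf const}}$, $\beta,\gamma>0$. To reach a contradiction I need strict convexity of $\mathcal A$ along such a segment, and this follows from Lemma~\ref{convex3} by relabeling: writing $\rho$ for the coordinate involution $(\alpha,\beta,\gamma)\mapsto(\gamma,\beta,\alpha)$, the identity $\mathcal A\circ\rho=\mathcal A$ together with $\rho_\ast(\partial_\beta-\partial_\gamma)=\partial_\beta-\partial_\alpha$ gives $\left(\partial_\beta-\partial_\gamma\right)^2\mathcal A=\big[\left(\partial_\alpha-\partial_\beta\right)^2\mathcal A\big]\circ\rho$, which is strictly positive throughout $\alpha,\beta,\gamma>0$ by Lemma~\ref{convex3}. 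So no segment of this type can contain two critical points, forcing $\beta=\gamma$. Combined with $\alpha=\beta$ from Lemma~\ref{symmetry3a}, this yields $\alpha=\beta=\gamma$.

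The only genuinely delicate points are the two bookkeeping facts underlying the symmetry: first, that the coordinate-permuting automorphisms really do act on ${\zap U}$ as honest permutations of $(\alpha,\beta,\gamma)$ fixing $\delta$ (this is the dictionary computation above, the heart of which is that $\delta=a-\alpha-\beta-\gamma$ is symmetric in the three divisor areas); and second, that the explicit convexity of Lemma~\ref{convex3}, which was proved for the single direction $\partial_\alpha-\partial_\beta$, transfers to the direction $\partial_\beta-\partial_\gamma$. Neither is hard once one invokes the full $S_3$-invariance of $\mathcal A$, so I expect the entire argument to be short—the computational weight having already been absorbed into Lemma~\ref{convex3}.
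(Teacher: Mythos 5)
Your proof is correct and takes essentially the same approach as the paper: the paper likewise invokes the automorphism exchanging two exceptional divisors and simply conjugates Lemma~\ref{symmetry3a} by it to conclude $\alpha=\gamma$ (where you instead conjugate the convexity of Lemma~\ref{convex3} to rerun the reflection argument and get $\beta=\gamma$), then combines this with $\alpha=\beta$. Your bookkeeping---that coordinate permutations fix $\delta$ and hence act on ${\zap U}$ as permutations of $(\alpha,\beta,\gamma)$, and that $(\partial_\beta-\partial_\gamma)^2{\mathcal A}=[(\partial_\alpha-\partial_\beta)^2{\mathcal A}]\circ\rho$---is exactly the implicit content of the paper's one-line ``conjugating by this automorphism'' step.
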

\begin{proof} Notice
that ${\mathcal A}(\alpha, \beta , \gamma )= {\mathcal A}( \alpha,\gamma , \beta  )$,
since there is an automorphism of $\CP_2\# 3 \overline{\CP}_2$ which 
exchanges the last two exceptional divisors.  Conjugating by this
automorphism, 
 Lemma \ref{symmetry3a}
thus also shows that any critical point also satisfies $\alpha=\gamma$. Since any 
critical point also satisfies 
$\alpha=\beta$ by     Lemma \ref{symmetry3a},  
we therefore conclude that 
$\alpha= \beta=\gamma$ for any critical point in ${\zap U}$.
\end{proof}

 We now consider the action of two finite groups on $H^2(\CP_2\# 3 \overline{\CP}_2, \RR)$. 
 The first of these is the $\ZZ_2$-action induced by the Cremona transformation $\Phi$, 
 while the second  is the $\ZZ_3$-action generated by a cyclic permutation of  the 
 three blown-up points in $\CP_2$. 
 Let ${\mathbb V}$ 
(respectively,  ${\mathbb W}$) $\subset H^2(\CP_2\# 3 \overline{\CP}_2, \RR)$
 denote the invariant subspace (i.e. $(+1)$-eigenspace) of this 
 $\ZZ_2$-action (respectively, $\ZZ_3$-action).  
 In terms of the previously dicussed linear coordinates $(\alpha, \beta, \gamma, \delta )$ 
 on $H^2 ( \CP_2\# 3 \overline{\CP}_2, \RR)$, 
 ${\mathbb V}$ is  given by $\delta=0$, while 
 ${\mathbb W}$ is given by $\alpha=\beta=\gamma$. Notice
 that the anti-canonical class $c_1$ belongs to both   ${\mathbb V}$ and  ${\mathbb W}$. 
 Also notice that our two actions actually commute, so that 
  ${\mathbb V}$ and  ${\mathbb W}$ are in particular sent to themselves  $\Phi^*$. 
   
\begin{lem}\label{claritas}
If $\Omega \in {\zap K} \subset H^2(\CP_2\# 3 \overline{\CP}_2, \RR)$  is a 
critical point of ${\mathcal A}: {\zap K}\to \RR$, then either 
 $\Omega \in {\mathbb V}$ or  $\Omega \in {\mathbb W}$.
\end{lem}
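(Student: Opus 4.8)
The plan is to sort critical classes by the sign of $\delta$ and treat the three resulting pieces separately, leaning on the symmetry result of Lemma \ref{symmetry3b} together with the automorphism-invariance of ${\mathcal A}$. Since ${\mathcal A}$ is scale-invariant, a critical point $\Omega\in{\zap K}$ descends to a critical point of ${\mathcal A}:\check{\zap K}\to\RR$, and because the sign of $\delta$ is unchanged under positive rescaling, the decomposition $\check{\zap K}={\zap U}\cup{\zap P}\cup{\zap U}^\prime$ places $\Omega$ in exactly one of the three regions. As ${\mathbb V}$ and ${\mathbb W}$ are linear subspaces, it suffices to locate the projected class, so I may as well argue directly in $\check{\zap K}$.

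First I would dispose of the two easy regions. If $\Omega\in{\zap U}$, i.e. $\delta>0$, then Lemma \ref{symmetry3b} forces $\alpha=\beta=\gamma$; since ${\mathbb W}$ is by definition the locus $\alpha=\beta=\gamma$, this yields $\Omega\in{\mathbb W}$. If instead $\Omega\in{\zap P}$, i.e. $\delta=0$, there is nothing to prove, because ${\zap P}$ is cut out by $\delta=0$, which is precisely the defining equation of ${\mathbb V}$; hence $\Omega\in{\mathbb V}$ automatically.

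The one case demanding an actual argument is ${\zap U}^\prime$, where $\delta<0$. Here I would exploit that ${\mathcal A}\circ\Phi={\mathcal A}$, so that $\Phi$ (acting as $\Phi^*$ on $H^2$) carries critical points to critical points. Because $\Phi$ is an involution with $\Phi({\zap U}^\prime)={\zap U}$, the class $\Phi(\Omega)$ is then a critical point lying in ${\zap U}$, and the previous paragraph gives $\Phi(\Omega)\in{\mathbb W}$. The final step uses the commutativity of the $\ZZ_2$- and $\ZZ_3$-actions recorded just above the statement: this commutativity is exactly what guarantees that $\Phi^*$ stabilizes ${\mathbb W}$. Applying $\Phi$ once more and using $\Phi^2=\mathrm{id}$, I recover $\Omega=\Phi(\Phi(\Omega))\in\Phi({\mathbb W})={\mathbb W}$. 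Combining the three cases then shows that every critical point lies in ${\mathbb V}\cup{\mathbb W}$.

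I expect the ${\zap U}^\prime$ case to be the only genuine obstacle, and within it the pivotal fact is that $\Phi^*$ preserves ${\mathbb W}$ rather than, say, sending it to some other eigenspace. This is precisely why the commutativity of the two finite-group actions is flagged immediately before the lemma; once that is in hand, the rest is simply a matter of chasing the sign of $\delta$ through the trichotomy and invoking Lemma \ref{symmetry3b} in the $\delta>0$ chart.
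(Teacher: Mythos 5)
Your proof is correct and takes essentially the same route as the paper's: the trichotomy of $\check{\zap K}$ into ${\zap U}$, ${\zap P}$, ${\zap U}^\prime$ by the sign of $\delta$, Lemma \ref{symmetry3b} applied in the coordinate chart ${\zap U}$, and the $\Phi$-invariance of ${\mathcal A}$ together with $\Phi^*{\mathbb W}={\mathbb W}$ (via the commuting $\ZZ_2$- and $\ZZ_3$-actions) to transfer the conclusion from ${\zap U}$ to ${\zap U}^\prime$. The paper merely compresses your three cases into one sentence by observing that a critical $\Omega\notin{\mathbb V}$ has either $\Omega$ or $\Phi^*\Omega$ projecting to ${\zap U}$, but the underlying argument is identical.
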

\begin{proof} Recall that the reduced K\"ahler cone $\check{\zap K}={\zap K}/\RR^+$ has been 
divided into the coordinate domain ${\zap U}$, its image ${\zap U}^\prime$
under the Cremona transformation $\Phi$, and the interface ${\zap P}$.
Since ${\zap P}$ is exactly cut out by the equation $\delta=0$, it is exactly
the image of ${\zap K}\cap {\mathbb V}$ in $\check{\zap K}$. 
If $\Omega$ is critical, but does not belong to ${\mathbb V}$, then
either $\Omega$ or  $\Phi^*\Omega$ must project to  ${\zap U}$,
and in this case Lemma \ref{symmetry3b} tells us that 
either $\Omega$ or $\Phi^*\Omega$ must satisfy
$\alpha=\beta=\gamma$, and so belong to ${\mathbb W}$. 
Since  ${\mathbb W}$ is $\Phi$-invariant, the claim follows. 
\end{proof}

\begin{lem}\label{veritas}
Let $\Omega$ be a K\"ahler class that belongs to either ${\mathbb V}$ or
${\mathbb W}$. Then the Futaki invariant ${\mathfrak F}(\Omega)$
vanishes, and 
${\mathcal A} (\Omega )= (c_1\cdot \Omega )^2/\Omega^2$. 
\end{lem}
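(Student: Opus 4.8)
The plan is to reduce the entire statement to the vanishing of the Futaki character. Since the formula recorded just before Lemma~\ref{convex3} reads ${\mathcal A}(\Omega) = (c_1\cdot \Omega)^2/\Omega^2 + \frac{1}{32\pi^2}\|{\mathfrak F}(\Omega)\|^2$, the asserted identity ${\mathcal A}(\Omega)= (c_1\cdot\Omega)^2/\Omega^2$ is an immediate consequence of ${\mathfrak F}(\Omega)=0$, because then $\|{\mathfrak F}(\Omega)\|^2=0$. Thus the whole content of the lemma is the vanishing of ${\mathfrak F}(\Omega)$ as a $\CC$-linear functional on $\mathfrak{h}:=H^0(M,{\mathcal O}(T^{1,0}M))$.

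The key input is the naturality of the Futaki invariant: for any biholomorphism $\sigma$ of $(M,J)$ and any $\Xi\in\mathfrak{h}$ one has ${\mathfrak F}(\sigma_*\Omega)(\sigma_*\Xi)={\mathfrak F}(\Omega)(\Xi)$. Because the three blown-up points are in general position, the identity component of $\Aut(M)$ is the complexified torus $(\CC^{*})^2$, so $\mathfrak{h}$ is precisely its $2$-dimensional abelian Lie algebra, which I would identify with $\CC^3/\CC\cdot(1,1,1)$ via $(a_1,a_2,a_3)\mapsto \sum_i a_i\, z_i\,\partial_{z_i}$. Both the Cremona involution $\Phi$ and the cyclic permutation $\rho$ of the three points normalize this torus, and hence act $\CC$-linearly on $\mathfrak{h}$; the plan is to compute these two actions and feed them into the naturality relation.

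For $\Omega\in{\mathbb V}$ I use $\Phi$. In the affine chart $\Phi$ is $(u,v)\mapsto(1/u,1/v)$, and a direct computation gives $\Phi_*(z_i\,\partial_{z_i})=-z_i\,\partial_{z_i}$, so that $\Phi_*=-\mathrm{id}$ on $\mathfrak{h}$. Since ${\mathbb V}$ is the fixed subspace of $\Phi^*$, we have $\Phi_*\Omega=\Omega$, and naturality gives ${\mathfrak F}(\Omega)(\Xi)={\mathfrak F}(\Omega)(\Phi_*\Xi)={\mathfrak F}(\Omega)(-\Xi)=-{\mathfrak F}(\Omega)(\Xi)$ for every $\Xi$, forcing ${\mathfrak F}(\Omega)=0$. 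For $\Omega\in{\mathbb W}$ I use $\rho$, which permutes the $a_i$ cyclically; on the quotient $\mathfrak{h}=\CC^3/\CC\cdot(1,1,1)$ its eigenvalues are the primitive cube roots of unity $\omega,\omega^2$, neither equal to $1$. As $\rho^*\Omega=\Omega$, naturality shows ${\mathfrak F}(\Omega)={\mathfrak F}(\Omega)\circ\rho_*$, i.e.\ ${\mathfrak F}(\Omega)$ is a fixed vector of the transpose action of $\rho_*$ on $\mathfrak{h}^*$; but that action has eigenvalues $\omega,\omega^2$ and hence no nonzero fixed vector, so again ${\mathfrak F}(\Omega)=0$. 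As a consistency check, in the ${\zap U}$-chart the explicit formulas for ${\mathfrak F}_1,{\mathfrak F}_2$ vanish identically on the diagonal $\alpha=\beta=\gamma={\mathbb W}\cap{\zap U}$, since both $\alpha+\beta-2\gamma$ and $(\gamma-\alpha)(\gamma-\beta)$ are then zero.

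The only delicate point is pairing the naturality statement with the correct identification of the induced finite-group action on $\mathfrak{h}$; once the eigenvalue $1$ has been excluded in each case, the vanishing of the equivariant functional ${\mathfrak F}(\Omega)$ is forced by pure representation theory, and $\|{\mathfrak F}(\Omega)\|=0$ then yields ${\mathcal A}(\Omega)=(c_1\cdot\Omega)^2/\Omega^2$. Everything else is bookkeeping.
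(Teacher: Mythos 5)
Your proposal is correct and follows essentially the same route as the paper: the paper likewise observes that a class in ${\mathbb V}$ or ${\mathbb W}$ is invariant under the $\ZZ_2$- or $\ZZ_3$-action, that these actions on the Lie algebra of the automorphism torus have no eigenvalue $+1$, and concludes via the equivariance of the Futaki invariant (citing \cite{calabix2,ls2}) that ${\mathfrak F}(\Omega)=0$, whence ${\mathcal A}(\Omega)=(c_1\cdot\Omega)^2/\Omega^2$. Your only additions are to spell out explicitly what the paper leaves to the citations --- namely $\Phi_*=-\mathrm{id}$ and the eigenvalues $\omega,\omega^2$ of the cyclic permutation on $\mathfrak{h}\cong\CC^3/\CC\cdot(1,1,1)$ --- and these computations are accurate.
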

\begin{proof} By hypothesis, $\Omega$ is invariant under either our
$\ZZ_2$-action or our $\ZZ_3$-action. However, both of these
actions induce an action on the Lie algebra of the automorphism 
torus for which $+1$ is not an eigenvalue. Consequently \cite{calabix2,ls2},
the Futaki invariant must vanish for $\Omega$. This means that there
is no Futaki contribution to  ${\mathcal A}$, 
resulting in the stated simplification. 
\end{proof}

  \begin{prop} \label{gaudete} 
  For $M= \CP_2 \# 3 \overline{\CP}_2$, the 
   function ${\mathcal A}: \check{\zap K} \to \RR$ has exactly one 
   critical point. Moreover, this critical point is exactly the image of  $c_1\in {\zap K}$. 
     \end{prop}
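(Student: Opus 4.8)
The plan is to combine the two structural results, Lemmas \ref{claritas} and \ref{veritas}, with an elementary analysis of the reduced functional $\Omega\mapsto (c_1\cdot\Omega)^2/\Omega^2$ on the invariant subspaces $\mathbb{V}$ and $\mathbb{W}$. For \emph{uniqueness}, I would begin from Lemma \ref{claritas}, which guarantees that every critical point $\Omega$ of $\mathcal{A}$ lies in $\mathbb{V}$ or in $\mathbb{W}$. On either of these subspaces, Lemma \ref{veritas} tells us that the Futaki contribution drops out, so that $\mathcal{A}$ reduces to the scale-invariant quadratic expression $(c_1\cdot\Omega)^2/\Omega^2$. The entire problem therefore collapses to locating the critical points of this single Gauss--Bonnet-type functional on each of the cones $\mathbb{V}\cap\zap K$ and $\mathbb{W}\cap\zap K$.

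The key computation is the following. Writing $L(\Omega)=c_1\cdot\Omega$ and $N(\Omega)=\Omega^2$, the derivative of $f=L^2/N$ in a direction $v$ tangent to a subspace $S\ni c_1$ is $D_vf=(2L/N^2)\,\big((Nc_1-L\Omega)\cdot v\big)$. Since $c_1$ is ample on the del Pezzo surface $\CP_2\#3\overline{\CP}_2$, we have $L>0$ throughout the Kähler cone; and since $c_1,\Omega\in S$, the vector $Nc_1-L\Omega$ again lies in $S$. Hence, provided the intersection form restricts to a \emph{nondegenerate} form on $S$, the vanishing of $D_vf$ for all $v\in S$ forces $Nc_1-L\Omega=0$, i.e. $\Omega\propto c_1$. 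I would then verify the nondegeneracy hypothesis directly. With $F=E_1+E_2+E_3$ one finds $\mathbb{W}=\mathrm{span}(H,F)$ carrying a form of signature $(1,1)$, while the Cremona involution $\sigma$ splits $H^2$ orthogonally into the three-dimensional $\mathbb{V}=\mathrm{span}(c_1,E_1-E_2,E_2-E_3)$ of signature $(1,2)$ and the negative-definite line $\mathbb{V}_-=\mathrm{span}(H-E_1-E_2-E_3)$. In both cases the restriction is nondegenerate, so on each subspace the \emph{only} critical point of the reduced functional, hence of $\mathcal{A}$, is the ray through $c_1$. This shows that $[c_1]$ is the unique candidate.

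For \emph{existence}, I would exploit the same computation together with the dimension count $\dim\mathbb{V}+\dim\mathbb{W}-\dim(\mathbb{V}\cap\mathbb{W})=3+2-1=4=\dim H^2$. The previous step shows that $c_1$ is a critical point both of $\mathcal{A}|_{\mathbb{V}}$ and of $\mathcal{A}|_{\mathbb{W}}$, so every directional derivative of the ambient function $\mathcal{A}$ along a vector tangent to $\mathbb{V}$ or to $\mathbb{W}$ vanishes at $c_1$. Because $\mathbb{V}+\mathbb{W}=H^2$, these directions span the full tangent space of $\check{\zap K}$ at $[c_1]$ (the radial direction being automatically annihilated by the scale-invariance of $\mathcal{A}$). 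Therefore the gradient of $\mathcal{A}$ vanishes at $[c_1]$, and $[c_1]$ is genuinely a critical point. Equivalently, one may note that $\mathrm{span}(c_1)=\mathbb{V}\cap\mathbb{W}$ is the unique line fixed by the group generated by the symmetric group $S_3$ and the Cremona involution, so the invariant gradient can only point radially and must therefore vanish.

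The main obstacle is essentially bookkeeping rather than hard analysis: all of the heavy machinery—the convexity estimate of Lemma \ref{convex3}, the symmetry reductions, and the Futaki vanishing of Lemma \ref{veritas}—has already been assembled, and what remains is to confirm the two signature/nondegeneracy claims for the intersection form on $\mathbb{V}$ and $\mathbb{W}$ and to record that $c_1$ is ample (so that $L>0$ and $[c_1]\in\zap K$). Once those linear-algebra facts are in place, both uniqueness and existence follow at once from the single critical-point identity $Nc_1=L\Omega$ for $(c_1\cdot\Omega)^2/\Omega^2$.
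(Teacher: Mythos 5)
Your proof is correct, and its reduction step is exactly the paper's: Lemma \ref{claritas} confines any critical class to ${\mathbb V}\cup{\mathbb W}$, and Lemma \ref{veritas} replaces ${\mathcal A}$ there by $(c_1\cdot\Omega)^2/\Omega^2$. The endgame, however, is genuinely different. For uniqueness the paper varies along the \emph{single} direction $c_1$ (available in both cases, since $c_1\in{\mathbb V}\cap{\mathbb W}$), computes $\left.\frac{d}{dt}\right|_{t=0}{\mathcal A}(\Omega+tc_1)=\frac{2\,c_1\cdot\Omega}{(\Omega^2)^2}\bigl[\Omega^2c_1^2-(c_1\cdot\Omega)^2\bigr]$, and invokes the reverse Cauchy--Schwarz inequality for the Lorentzian intersection form (both classes being time-like) to see this is nonzero unless $\Omega\propto c_1$; you instead use \emph{all} directions tangent to $S\in\{{\mathbb V},{\mathbb W}\}$ and extract the critical-point identity $Nc_1=L\Omega$ from nondegeneracy of the restricted form, which your signature computations ($(1,2)$ on ${\mathbb V}$, $(1,1)$ on ${\mathbb W}$, with ${\mathbb V}\cap{\mathbb W}=\mathrm{span}(c_1)$) correctly establish. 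For existence the paper argues globally: the Futaki term is nonnegative and vanishes at $c_1$, so reverse Cauchy--Schwarz gives ${\mathcal A}(\Omega)\geq(c_1\cdot\Omega)^2/\Omega^2\geq c_1^2={\mathcal A}(c_1)$, exhibiting $c_1$ as the unique absolute minimum and hence a critical point; your existence proof is local, via the spanning count $\dim({\mathbb V}+{\mathbb W})=3+2-1=4$ (or the equivalent fixed-line symmetry argument), and never establishes minimality. Each route buys something: the paper's also delivers the minimizing property of $c_1$ --- the analogue of what drives Theorem \ref{uni2} --- with no eigenspace bookkeeping, whereas yours avoids the Lorentzian inequality altogether, which is a minor virtue given that the paper's displayed inequality actually has its sign reversed (for two time-like classes the correct statement is $(c_1\cdot\Omega)^2\geq c_1^2\,\Omega^2$, which is what the ensuing sentence in the paper uses). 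Two small points worth making explicit in your write-up: both your spanning argument and the paper's minimum argument presuppose that ${\mathcal A}$ is differentiable at $c_1$, a class lying on the interface ${\zap P}$ where the explicit ${\zap U}$-coordinate formulas do not apply --- harmless, since the Futaki--Mabuchi norm and hence ${\mathcal A}$ vary smoothly with the class across ${\zap P}$, but it deserves a sentence --- and, as you note, one must record that $c_1$ is a K\"ahler class on the Del Pezzo surface $\CP_2\#3\overline{\CP}_2$, so that the critical ray actually lies in $\check{\zap K}$.
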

\begin{proof}
Let $\Omega\in {\zap K}$ be a critical class. Then, by Lemma \ref{claritas},
$\Omega$ must belong to  one of the linear spaces ${\mathbb V}$ or
${\mathbb W}$. Since $c_1\in {\mathbb V}\cap {\mathbb W}$, it therefore
follows that $\Omega + t c_1$ belongs to $ {\mathbb V}\cup {\mathbb W}$
for all $t$. Moreover, since ${\zap K}$ is open,  $\Omega + t c_1$
is also a K\"ahler class for all $t$ in some small interval $(-\varepsilon, \varepsilon )$. 
By Lemma \ref{veritas}, we thus have
$${\mathcal A} (\Omega +t c_1) = \frac{[c_1\cdot (\Omega +tc_1)]^2}{(\Omega +tc_1)^2}$$
and  the first variation of $\mathcal A$ is therefore  given by 
 \begin{eqnarray*}
\left. \frac{d{\mathcal A}}{dt} \right|_{t=0}
&=& 
\left. \frac{d}{dt}  \frac{(c_1\cdot \Omega + tc_1^2 )^2}{(\Omega + tc_1)^2} \right|_{t=0}\\
&=& 
 \frac{2 c_1\cdot \Omega}{(\Omega^2)^2} \left[\Omega^2 c_1^2- (c_1\cdot \Omega)^2\right]~.
\end{eqnarray*}
However, because  the intersection form on $H^2 (\CP_2 \# 3\overline{\CP}_2, \RR)$
 is of Lorentz type, and because $\Omega$ and $c_1$ are both time-like,  
 the reverse Cauchy-Schwarz 
 inequality for Lorentzian inner products tells us that 
 $$
 (c_1\cdot \Omega)^2 \leq  c_1^2 ~\Omega^2~,
 $$
 with equality iff $\Omega$ and  $c_1$ are linearly dependent;
 consequently,  the  first variation computed above is
automatically
negative unless $\Omega$ is a multiple of $c_1$. It follows that 
$\Omega$ can be a critical point of $\mathcal A$ only if its image in $\check{\zap K}$
coincides with that  of $c_1$. Moreover, by (\ref{sharp}) and the reverse
 Cauchy-Schwarz inequality, $c_1$ is actually  the unique absolute minimum 
 of ${\mathcal A}$, and so, in particular, is a critical point. 
This  establishes the stated uniqueness result. 
\end{proof}

  Theorem \ref{uni3} now follows by the same reasoning we previously used
  to prove Theorem \ref{uni2}. In particular, a conformally Einstein K\"ahler metric
   $M= \CP_2 \# 3 \overline{\CP}_2$ must actually be K\"ahler-Einstein, and so 
   does not give rise to an entry on the list of exceptions in Theorem \ref{jubilo}.
   
 \bigskip   
   
   Combining  Proposition \ref{aarhus} with Theorems \ref{laudate} and \ref{gaudete},
  we have thus proved  Theorem  \ref{jubilo}, and our chain of reasoning  has thus 
culminated in the proof of all the promised results.   

\pagebreak

\vfill 

\noindent 
{\bf Acknowledgement.} The author would like to warmly thank 
Gideon Maschler for  numerous discussions which helped 
lay the groundwork for the present investigation. 
  
\vspace{1in}

\noindent
{\sc Author's address:} 

\medskip 

 \noindent
{Mathematics Department, SUNY, Stony Brook, NY 11794, USA
}

\bigskip

\noindent
{\sc Author's e-mail:} 

\medskip 

 \noindent
{\tt claude@math.sunysb.edu
}

  \end{document}